\documentclass{amsart}
\usepackage{amssymb}
\usepackage{amsfonts}
\usepackage{amsmath,amsthm}
\usepackage{natbib}

\newtheorem{theorem}{Theorem}[section]
\newtheorem{corollary}[theorem]{Corollary}
\newtheorem{lemma}[theorem]{Lemma}
\newtheorem{proposition}[theorem]{Proposition}
\newtheorem{definition}[theorem]{Definition}
\newtheorem{remark}[theorem]{Remark}
\newtheorem{assumption}{Assumption}

\newcommand{\EE}{\ensuremath{\mathbb{E}}}
\newcommand{\Prob}{\ensuremath{\mathbb{P}}}

\newcommand{\ul}{\ensuremath{\lfloor t/\Delta_n\rfloor}}
\newcommand{\uls}{\ensuremath{\lfloor s/\Delta_n\rfloor}}

\newcommand{\R}{\ensuremath{\mathbb{R}}}
\newcommand{\N}{\ensuremath{\mathbb{N}}}
\newcommand{\SRV}{SARCV}

\allowdisplaybreaks

\begin{document}


\title[Law of large numbers for realised covariation in Hilbert space]{A weak law of large numbers for realised covariation  in a Hilbert space setting}

\author{Fred Espen Benth \and Dennis Schroers \and Almut E. D. Veraart}
\date{\today}

\thanks{F. E. Benth and A. E. D. Veraart would like to thank the Isaac Newton Institute for Mathematical Sciences for support and hospitality during the programme  \emph{The Mathematics of Energy Systems} when parts of the work on this paper were undertaken. This work was supported by:
EPSRC grant number EP/R014604/1.
D. Schroers and F. E. Benth gratefully acknowledge financial support from the STORM project 274410, funded by the Research Council of Norway, and the thematic research group SPATUS, funded by UiO:Energy at the University of Oslo.}

\address{Fred Espen Benth: Department of Mathematics, University of Oslo, P.O. Box 1053, Blindern, 0316, OSLO, Norway}

\address{Dennis Schroers: Department of Mathematics, University of Oslo, P.O. Box 1053, Blindern, 0316, OSLO, Norway}

\address{Almut E. D. Veraart: Department of Mathematics, Imperial College London, 180 Queen’s Gate, London, SW7 2AZ, UK}

\begin{abstract}

This article generalises the concept of realised covariation to Hilbert-space-valued stochastic processes. More precisely, based on high-frequency functional data, we construct an estimator of the trace-class operator-valued integrated volatility process arising in general mild solutions of Hilbert space-valued stochastic evolution equations in the sense of \cite{DPZ2014}. 
We prove a weak law of large numbers for this estimator, where the convergence is uniform on compacts in probability with respect to the Hilbert-Schmidt norm. 
In addition, we show that the conditions on the volatility process are valid for most common stochastic volatility models in Hilbert spaces. 
\end{abstract}

\keywords{Law of large numbers, High-Frequency Estimation, Quadratic covariation, Volatility, Hilbert space, Evolution equations}


\maketitle


\section{Introduction }
Stochastic volatility and covariance estimation are of key importance in many fields. Motivated in particular by financial applications, a lot of research has been devoted to constructing suitable (co-) volatility estimators and to deriving their asymptotic limit theory in the setting when discrete, high-frequent observations are available. Initially, the main interest was in (continuous-time) stochastic models based on (It\^{o}) semimartingales, where the so-called realised variance and covariance estimators (and their extensions) proved to be powerful tools. Relevant articles include the works by \cite{BNS2002, BNS2003, BNS2004, ABDL2003} and \cite{Jacod2008}, amongst many others,
and the textbooks by  \cite{JacodProtter2012} and  \cite{Ait-SahaliaJacod2014}.

Subsequently, the theory was extended to cover non-semimartingale models, see, for instance, \cite{CNW2006}, \cite{BNCP11}, \cite{BNCP10b}, \cite{CorcueraHPP2013},  \cite{CorcueraNualartPodolskij2015}  and the survey by \cite{Podolskij2014}, where the proofs of the asymptotic theory rely on Malliavin calculus and the famous fourth-moment theorem, see \cite{NP2005}. The multivariate theory has been studied in \cite{VG2019, PV2019}.

Common to these earlier lines of investigation is the fact that the stochastic processes considered have finite dimensions.
In this article, we extend the concept of realised covariation to an infinite-dimensional framework. 

The estimation of covariance operators is elementary in the field of functional data analysis and was elaborated mainly for discrete-time series of functional data (see e.g. \cite{Ramsay2005}, \cite{Ferraty2006}, \cite{Yao2005}, \cite{Bosq2012}, \cite{Horvath2012}, \cite{Panaretos2013}).
 However, spatio-temporal data that can be considered as functional might also be sampled densely in time, like forward curves for interest rates or commodities and data from  geophysical and environmental applications.

In this paper, we consider a 
separable Hilbert space $H$ and 
study 
 $H$-valued stochastic processes $Y$ of the form
\begin{equation}\label{Intro Mild SPDE}
Y_t=\mathcal{S}(t)h+\int_0^t \mathcal{S}(t-s)\alpha_sds+\int_0^t\mathcal S(t-s)\sigma_s dW_s,\quad t\in[0,T],
\end{equation}
for some $T>0$. 
Here $(\mathcal S (t))_{t\geq 0}$ is a strongly continuous semigroup, $\alpha:=(\alpha_t)_{t\in[0,T]}$ a predictable and almost surely integrable $H$-valued stochastic process, $\sigma:=(\sigma_t)_{t\in [0,T]}$ is a predictable operator-valued process, $h\in H$ some initial condition and $W$ a so called $Q$-Wiener process on $H$ (see Section \ref{sec: Preliminaries} below for details).

Our aim is to construct an estimator 
 for the integrated covariance process 
 \begin{equation*}
 \left(\int_0^t\sigma_sQ\sigma_s^*ds\right)_{t\in[0,T]}.    
 \end{equation*}
 More precisely, we denote by 
  \begin{equation}\label{Intro Realised Volatility}
 \sum_{i=1}^{\ul}(Y_{t_i}-\mathcal S(\Delta_n)Y_{t_{i-1}})^{\otimes 2},
\end{equation}
the \emph{
semigroup-adjusted realised covariation (\SRV)}
for an equally spaced grid $t_i:=i\Delta_n$ for $\Delta_n=1/n$, $i=1,\dots,\lfloor t/\Delta_n\rfloor$. 
We  prove uniform convergence in probability (ucp) with respect to the Hilbert-Schmidt norm of the \SRV\ 
to the integrated covariance process under mild conditions on the volatility.

This framework differs from common high-frequency settings mainly due to peculiarities that arise from infinite dimensions. First, observe  that the main motivation to consider processes in this form, is that a vast amount of parabolic stochastic partial differential equations
posses only mild (in opposition to analytically strong) solutions, which are of the form \eqref{Intro Mild SPDE}. That is, $Y$ is (under weak conditions) the mild solution of a stochastic partial differential equation
\begin{align*}
(\text{SPDE}) \quad dX_t= (AX_t +\alpha_t) dt+ \sigma_t dW_t, \quad X_0=h,\quad t\in [0,T].
\end{align*}
 (cf. \cite{DPZ2014}, \cite{PZ2007} or \cite{GM2011}).

In contrast to finite-dimensional stochastic diffusions, this is a priori not an $H$-valued semimartingale, but rather an $H$-valued Volterra process. 

Various recent developments related to statistical inference  for (parabolic) SPDEs based on discrete observations in time and space have emerged, see e.g.~\cite{Cialenco2020}, \cite{Bibinger2020}, \cite{Chong2020}, \cite{ChongDalang2020}.

To 
the best of our knowledge, our paper is the first one considering high-frequency estimation of (co-) volatility  of infinite-dimensional stochastic evolution equations in an operator setting.
This is of interest  for various reasons. For instance, a simple and important application might be the parameter estimation for $H$-valued Ornstein-Uhlenbeck process (that is, $\sigma_s=\sigma$ is a constant operator).
Elementary techniques such as functional principal component analysis might then be considered on the level of volatility. In a multivariate setting, dynamical dimension reduction was conducted for instance in \cite{AIT-Sahalia2019}.
Furthermore, it can be used as a tool for inference of infinite-dimensional stochastic volatility models as in \cite{BenthRudigerSuss2018} or \cite{BenthSimonsen2018}.
In the special case of a semigroup that is continuous with respect to the operator norm, the framework also covers the estimation of volatility for $H$-valued semimartingales.


We organize the paper as follows: 
First, we recall the main technical preliminaries of our framework in Section \ref{sec: Preliminaries}. In Section \ref{sec: Weak Law of large numbers}, we establish the weak law of large numbers. For that,  we discuss the conditions  imposed on the volatility process in Section \ref{sec: Technical assumptions} and state our main result, given by Theorem \ref{T: LLN for semigroup case}, in Section \ref{sec: Main result}. Afterwards, we show how to weaken the assumptions on the volatility by a localization argument in Section \ref{sec: Extension by localization}. In Section \ref{sec: Applications}, we study the behaviour of the estimator in special cases of semigroups and volatility.
We discuss conditions for particular examples of semigroups to determine the speed of convergence of the estimator in Section \ref{sec: Convergence rates and semigroups}. In Section \ref{sec: Stochastic volatility models}, we validate our assumptions for some stochastic volatility models in Hilbert spaces.  Section 5 is devoted to the proofs of our main results, while in  
Section \ref{Sec:Conclusion}  we discuss our results and methods in relation to some existing literature and provide some outlook into further developments. 
Some technical proofs are relegated to the Appendix.

\section{Notation and some preliminary results}\label{sec: Preliminaries}
Let
$(\Omega, \mathcal{F}, (\mathcal{F}_t)_{t\geq 0}), \Prob)$ denote a filtered probability space satisfying the usual conditions.
Consider two separable Hilbert spaces $U, H$ with scalar products denoted by $\langle \cdot, \cdot \rangle_U$,  $\langle \cdot, \cdot \rangle_H$  and norms $\Vert\cdot\Vert_U$, $\Vert\cdot\Vert_H$, respectively. We denote $L(U,H)$ the space of all linear bounded operators $K:U\rightarrow H$, and use the shorthand notation $L(U)$ for $L(U,U)$. Equipped with the operator norm, $L(U,H)$ becomes a Banach space. The adjoint operator of a $K\in L(U,H)$ is denoted by $K
^*$, and is an element on $L(H,U)$.  
%

Following \citet[Appendix A]{PZ2007} we use the following notations:
An operator $K\in L(U,H)$ is called \emph{nuclear} or \emph{trace class} if the following representation holds
\begin{align*}
    K u = \sum_k b_k\langle u, a_k \rangle_U, \text{ for } u \in U,
\end{align*}
where $\{a_k\} \subset U$ and $\{b_k\}\subset H$ such that $\sum_k\Vert a_k\Vert_U\Vert b_k\Vert_H<\infty$. The  space of all nuclear operators is denoted by $L_1(U,H)$; it is a separable Banach space and its norm is denoted by
\begin{align*}
    \Vert K\Vert_1 :=\inf\left\{\sum_k\Vert a_k\Vert_U\Vert b_k\Vert_H: K u = \sum_k b_k\langle u, a_k \rangle_U\right\}.
\end{align*}
We denote by $L^+_1(U,H)$ the class of all symmetric, non-negative-definite nuclear operators from  $U$ to $H$.
We write $L_1(U)$ and
$L_1^+(U)$ for $L_1(U,U)$ and $L_1^+(U,U)$, resp. Frequently, nuclear operators are also called trace class operators.

For $x\in U$ and $y\in H$, we define the tensor product $x\otimes y$ as the linear operator in $L(U,H)$ defined as $x\otimes y(z):=\langle x, z\rangle_U y$ for $z\in U$. We note that $x \otimes y \in L_1(U,H)$ and $\Vert x \otimes y \Vert_1 =\Vert x\Vert_U \Vert y\Vert_H$, see \citet[p.~107]{PZ2007}.

The operator $K\in L(U,H)$ is said to be a \emph{Hilbert-Schmidt operator} if
\begin{align*}
    \sum_k \Vert K e_k\Vert_H^2 < \infty,
\end{align*}
for any orthonormal basis (ONB) $(e_k)_{k\in\mathbb N}$ of $U$. The space of all Hilbert-Schmidt operators is denoted by $L_{\text{HS}}(U,H)$. We can introduce an inner product by
\begin{align*}
    \langle K, L \rangle_{\text{HS}}:=\sum_{k}\langle Ke_k, Le_k\rangle_H, \text{ for }  K,L \in L_{\text{HS}}(U,H).
\end{align*}
The induced norm is denoted $\Vert\cdot\Vert_{\text{HS}}$. 
As usual, we write $L_{\text{HS}}(U)$ in the case $L_{\text{HS}}(U,U)$.


We have the following convenient result for the space of Hilbert-Schmidt operators. Although it is well-known, we include the proof of this result in the Appendix \ref{App:proofs} for the convenience of the reader:
\begin{lemma}
\label{lem:HS-banachalg}
Let $U,V,H$ be separable Hilbert spaces. Then $L_{\text{HS}}(U,H)$ is a separable 
Hilbert space. Moreover, if $K\in L_{\text{HS}}(U,V), L\in L_{\text{HS}}(V,H)$, then
$LK\in L_{\text{HS}}(U,H)$ and 
\begin{equation}
\Vert LK\Vert_{\text{HS}}\leq \Vert L \Vert_{\text{op}}\Vert K\Vert_{\text{HS}}\leq \Vert L \Vert_{\text{HS}}\Vert K\Vert_{\text{HS}},
\end{equation}
where the HS-norms are for the spaces in question.
\end{lemma}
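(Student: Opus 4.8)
The plan is to split the argument into two independent parts: first establishing that $L_{\text{HS}}(U,H)$ is a separable Hilbert space, and then proving the two norm inequalities, which carry the operative content for the applications in this paper.

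For the Hilbert space structure, I would begin by checking that the inner product $\langle\cdot,\cdot\rangle_{\text{HS}}$ is well-defined, i.e.\ that the defining sum does not depend on the chosen ONB. Fixing an ONB $(e_k)$ of $U$ and an ONB $(f_j)$ of $H$, Parseval's identity gives $\sum_k\Vert Ke_k\Vert_H^2=\sum_{k,j}|\langle Ke_k,f_j\rangle_H|^2=\sum_j\Vert K^*f_j\Vert_U^2$, which is manifestly symmetric in the two bases; polarisation then transfers this basis-independence from the quadratic form to the full bilinear form. To obtain completeness and separability at once, I would consider the coordinate map $\Phi\colon K\mapsto(\langle Ke_k,f_j\rangle_H)_{j,k}$. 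The computation above shows that $\Phi$ is an isometry from $L_{\text{HS}}(U,H)$ into $\ell^2(\N\times\N)$, and surjectivity is verified by checking that for any square-summable array $(a_{jk})$ the prescription $Ke_k:=\sum_j a_{jk}f_j$ extends to a genuine Hilbert-Schmidt operator with the prescribed matrix entries. Since $\ell^2(\N\times\N)$ is a separable Hilbert space, the isometric isomorphism $\Phi$ transports both completeness and separability to $L_{\text{HS}}(U,H)$.

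For the inequalities I would treat the right-hand one first. The bound $\Vert L\Vert_{\text{op}}\leq\Vert L\Vert_{\text{HS}}$ follows by taking an arbitrary unit vector $v\in V$, completing it to an ONB of $V$ whose first element is $v$, and observing that $\Vert Lv\Vert_H^2$ is then dominated by the full sum defining $\Vert L\Vert_{\text{HS}}^2$. The left-hand inequality follows directly by estimating termwise against the operator norm: for an ONB $(e_k)$ of $U$,
\begin{equation*}
\Vert LK\Vert_{\text{HS}}^2=\sum_k\Vert LKe_k\Vert_H^2\leq\Vert L\Vert_{\text{op}}^2\sum_k\Vert Ke_k\Vert_V^2=\Vert L\Vert_{\text{op}}^2\Vert K\Vert_{\text{HS}}^2,
\end{equation*}
which in particular shows that $LK$ is Hilbert-Schmidt and hence establishes the composition claim.

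I expect the main obstacle to be the surjectivity of the coordinate map $\Phi$, namely confirming that an arbitrary $\ell^2(\N\times\N)$ array genuinely assembles into a bounded operator realising those entries, since the well-definedness of $\langle\cdot,\cdot\rangle_{\text{HS}}$ together with completeness and separability all rest on this identification; by contrast, the two norm estimates are short and self-contained.
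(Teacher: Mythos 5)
Your proposal is correct and follows essentially the same route as the paper: the composition bound is obtained by exactly the same termwise estimate $\sum_k\Vert LKe_k\Vert_H^2\leq\Vert L\Vert_{\text{op}}^2\sum_k\Vert Ke_k\Vert_V^2$, and the bound $\Vert L\Vert_{\text{op}}\leq\Vert L\Vert_{\text{HS}}$ differs only cosmetically (you complete a unit vector to an ONB, while the paper passes through the adjoint and Cauchy--Schwarz). The only substantive difference is that you prove the separable-Hilbert-space structure in full via the isometry with $\ell^2(\N\times\N)$, whereas the paper simply cites it as well known and records the ONB $(e_i\otimes f_j)_{i,j}$; your extra detail is sound.
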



\subsection{Hilbert-space-valued stochastic integrals}
Fix $T>0$ and assume that $0\leq t \leq T$ throughout.
Let $W$ denote a Wiener process taking values in $U$ with covariance operator $Q\in L^+_1(U)$.
\begin{definition}
A stochastic process $(W_t)_{t\geq 0}$ with values in $U$ is called Wiener process with covariance operator $Q \in L_1^+(U)$, if $W_0=0$ almost surely, $W$ has independent and stationary increments, and for $0\leq s \leq t$, we have $W_t-W_s\sim N(0,(t-s)Q)$.
\end{definition}
\begin{remark}
Recall that a $U$-valued random variable $X$ is normal with mean $a\in U$ and covariance operator $Q\in L_1^+(U)$ if 
$\langle X,f\rangle_U$ is a real-valued normally distributed random variable for each $f\in U$, with mean $\langle a,f\rangle$ and
$$
E[\langle X,f\rangle_U\langle X,g\rangle_U]=\langle Qf,g\rangle_U, \forall f,g\in U.
$$
\end{remark}

We introduce the space $\mathcal L_{2,T}(U,H)$ of predictable $L(U,H)$-valued stochastic processes $Z=(Z_t)_{t\geq 0}$
such that 
\begin{equation}
    \mathbb E\left[\int_0^T\Vert Z_sQ^{1/2}\Vert_{\text{HS}}^2ds\right]<\infty,
\end{equation}
for $T<\infty$. 
Then $\mathcal L_{2,T}(U,H)$ will be the space of integrable processes with respect to the $Q$-Wiener
process $W$ on $[0,T]$.

Let $\sigma=(\sigma_t)_{t\geq 0}$ denote a stochastic volatility process 
where $\sigma_t\in \mathcal L_{2,T}(U,H)$ for some fixed $T<\infty$.
 The stochastic integral
\begin{align*}
    Y_t:= \int_0^t \sigma_s dW_s
\end{align*}
can then be defined as in \cite[Chapter 8]{PZ2007} and takes values in the Hilbert space $H$.

We denote the tensor product of the stochastic integral $Y$ by 
$   \left(Y_t\right)^{\otimes 2}=Y_t\otimes Y_t$,
and define the corresponding stochastic variance term as the \emph{operator angle bracket} (not to be confused with the inner products introduced above!) given by 
\begin{align*}
    \langle\langle Y\rangle\rangle_t=\int_0^t\sigma_sQ\sigma^*_sds=\int_0^t
    (\sigma_sQ^{1/2}) (\sigma_sQ^{1/2})^*ds,
\end{align*}
see \citet[Theorem 8.7, p.~114]{PZ2007}.
\begin{remark}
As in \citet[p.~104]{DPZ2014}, we note that $(\sigma_sQ^{1/2})\in L_{HS}(U,H)$ and $(\sigma_sQ^{1/2})^*\in L_{HS}(H,U)$. Hence the process $(\sigma_sQ^{1/2}) (\sigma_sQ^{1/2})^*$ for $s\in [0,T]$ takes values in $L_1(H,H)$.
\end{remark}
\begin{remark} 
The integral
$
\int_0^t\sigma_sQ\sigma_s^* ds
$
is interpreted as a Bochner integral in the space of Hilbert-Schmidt operators
$L_{\text{HS}}(H)$. Indeed, $\sigma_sQ\sigma_s^*$ is a linear operator on $H$, and we have
\begin{align*}
\int_0^t\EE[\Vert\sigma_s Q\sigma_s^*\Vert_{\text{HS}}]ds&=\int_0^t\EE[\Vert\sigma_sQ^{1/2}(\sigma_sQ^{1/2})^*\Vert_{\text{HS}}]ds
\\
&\leq\int_0^t\EE[\Vert\sigma_sQ^{1/2}\Vert_{\text{HS}}^2]ds<\infty,
\end{align*}
by appealing to Lemma \ref{lem:HS-banachalg} and the 
assumption on $\sigma$ being an integrable process with respect
to $W$.
This means that the Bochner integral is $a.s.$ defined. If we relax integrability to go beyond $L^2$, this argument fails, but we still have a well-defined Bochner integral as we can argue pathwise.
\end{remark}

\begin{remark}
\label{rem:martingale}
From \citet[Theorem 8.2, p.~109]{PZ2007} we deduce that 
the process $(M_t)_{t\geq 0}$ with 
\begin{align*}
    M_t=  \left(Y_t\right)^{\otimes 2}-\langle\langle Y\rangle\rangle_t
\end{align*}
is an $L_1(H)$-valued martingale w.r.t.~$(\mathcal{F}_t)_{t\geq 0}$. Thus, the operator angle bracket process can be called the {\it quadratic covariation process} of $Y_t$, which we shall do from now on.  
\end{remark}


We end this section with a general expression for the even moments of an increment of the Wiener process. Later we will need the fourth moment in our analysis. 

 
 First, we introduce the $p$-trace of an operator $K\in L(U)$: We denote by $\text{Tr}_p(K)$ the 
{\it $p$-trace} of $K$, $p\in\mathbb N$, 
defined as
$$
\text{Tr}_p(K)=\sum_{i=1}^{\infty}\langle K e_i,e_i\rangle_U^p,
$$
whenever this converges. Here, $(e_i)_{i\in\mathbb N}$ is an ONB in $U$. We denote by $\text{Tr}$ the classical trace, given by $\text{Tr}=\text{Tr}_1$.
Consider now the positive definite symmetric trace class operator $Q$. If we organize the eigenvalues $(\lambda_i)_{i=1}^{\infty}\subset\mathbb R_+$ of $Q$ in decreasing order, letting $(e_i)_{i\in\mathbb N}$ be the ONB of eigenvectors, we have
$$
\text{Tr}_p(Q)\leq \lambda_1^{p-1}\sum_{i=1}^{\infty}\lambda_i=\text{Tr}(Q),
$$
and hence the $p$-trace is bounded by the trace for any $p>1$, and therefore also finite. The proof of the following result is relegated to Appendix \ref{App:proofs}:
\begin{lemma}
\label{lemma:4thmoment}
Let $W$ be a $Q$-Wiener process on $U$ and $q\in\mathbb N$ and define a generic increment as
$\Delta W_t:=W_{t+\Delta}-W_t$ for $\Delta>0$. Furthermore, let $(e_k)_{k\in\N}$ be the ONB in $U$ of eigenvectors of $Q$ with associated eigenvalues $(\lambda_k)_{k\in\N}$. Then, for any $t\geq 0$ and $m\in\N$ it holds that
$$
\mathbb E\left[\Vert\Delta W_t\Vert_U^{2q}\right]=(-i)^q\lim_{m\rightarrow\infty}\Phi_m^{(q)}(0),
$$
where
$$
\Phi_m(x)=\exp\left(-\frac12\sum_{k=1}^m\ln(1-2ix\Delta\lambda_k)\right),
$$
for $x\in\R$. 
In particular, 
$$
\mathbb{E}[\Vert \Delta W_t\Vert_U^4]=\Delta^2\left(\text{Tr}(Q)^2+2\text{Tr}_2(Q)\right).
$$
\end{lemma}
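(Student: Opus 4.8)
The plan is to diagonalise the increment in the eigenbasis of $Q$ and thereby reduce the claim to a statement about a sum of independent scaled chi-squared random variables. Since $\Delta W_t = W_{t+\Delta}-W_t \sim N(0,\Delta Q)$, the coordinates $\xi_k := \langle \Delta W_t, e_k\rangle_U$ are independent with $\xi_k \sim N(0,\Delta\lambda_k)$, so that $\Vert\Delta W_t\Vert_U^2 = \sum_{k\in\N}\xi_k^2 =: S$ and, for the truncation, $S_m := \sum_{k=1}^m \xi_k^2$. The partial sum $S_m$ is a finite sum of squared Gaussians and hence possesses moments of all orders; in particular the standard relation between moments and derivatives of the characteristic function gives $\EE[S_m^q] = (-i)^q \Phi_m^{(q)}(0)$, where $\Phi_m(x) = \EE[e^{ix S_m}]$ and we use $i^{-q}=(-i)^q$.

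Next I would compute $\Phi_m$ explicitly. Each $\xi_k^2/(\Delta\lambda_k)$ is $\chi^2_1$-distributed, so $\EE[e^{ix\xi_k^2}] = (1-2ix\Delta\lambda_k)^{-1/2}$, and independence of the $\xi_k$ yields $\Phi_m(x) = \prod_{k=1}^m (1-2ix\Delta\lambda_k)^{-1/2}$. This is precisely the stated form $\Phi_m(x) = \exp\bigl(-\tfrac12\sum_{k=1}^m \ln(1-2ix\Delta\lambda_k)\bigr)$ on a neighbourhood of the origin, where the principal branch of the logarithm is well defined. To pass from $S_m$ to $S$, I would invoke monotone convergence: since $\xi_k^2\geq 0$, the sequence $S_m$ increases to $S$ pointwise, hence $S_m^q\uparrow S^q$ and $\EE[S_m^q]\uparrow \EE[S^q] = \EE[\Vert\Delta W_t\Vert_U^{2q}]$. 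Combined with $\EE[S_m^q] = (-i)^q\Phi_m^{(q)}(0)$, this gives the first assertion.

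The point to watch is the finiteness of the limiting moments rather than any serious analytic difficulty: because $Q$ is trace class, the estimate $\mathrm{Tr}_p(Q)\leq \mathrm{Tr}(Q)<\infty$ recorded just before the lemma guarantees that all $p$-traces entering the moments are finite, so the monotone limit $\lim_m \EE[S_m^q]$ is finite. Differentiating the finite product $\Phi_m$ is elementary and poses no obstacle; the only mild subtlety is that the differentiation is done at the level of the truncation $\Phi_m$ and the limit $m\to\infty$ is then taken on the resulting numbers, which is exactly what the monotone convergence argument licenses.

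Finally, for $q=2$ I would extract the explicit constant. Writing $g_m := \ln\Phi_m$, one has $\Phi_m'' = \bigl(g_m'' + (g_m')^2\bigr)\Phi_m$, and a direct computation gives $g_m'(0) = i\Delta\sum_{k=1}^m\lambda_k$ and $g_m''(0) = -2\Delta^2\sum_{k=1}^m\lambda_k^2$. Letting $m\to\infty$ and using $\Phi_m(0)=1$ yields $\lim_m \Phi_m''(0) = -\Delta^2\,\mathrm{Tr}(Q)^2 - 2\Delta^2\,\mathrm{Tr}_2(Q)$, whence $\EE[\Vert\Delta W_t\Vert_U^4] = (-i)^2\lim_m\Phi_m''(0) = \Delta^2\bigl(\mathrm{Tr}(Q)^2 + 2\,\mathrm{Tr}_2(Q)\bigr)$. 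As a cross-check, the same value follows by expanding $S^2 = \sum_k \xi_k^4 + \sum_{k\neq l}\xi_k^2\xi_l^2$ and using $\EE[\xi_k^2]=\Delta\lambda_k$, $\EE[\xi_k^4]=3\Delta^2\lambda_k^2$ together with the identity $\sum_{k\neq l}\lambda_k\lambda_l = \mathrm{Tr}(Q)^2 - \mathrm{Tr}_2(Q)$.
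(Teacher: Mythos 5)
Your proposal is correct and follows essentially the same route as the paper: diagonalise the increment in the eigenbasis of $Q$, identify the coordinates as independent scaled $\chi^2_1$ variables to get the product form of $\Phi_m$, pass to the limit by monotone convergence (the paper cites Tonelli and Fernique for the finiteness of moments), and extract the fourth moment by differentiating at zero. Your logarithmic-derivative bookkeeping and the direct moment cross-check are cosmetic variants of the paper's explicit computation of $\Phi_m''$.
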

This finishes our section with preliminary results.

\section{The weak law of large numbers}\label{sec: Weak Law of large numbers}

In this section, we show our main result on the law of large numbers for Volterra-type stochastic integrals in Hilbert space with 
operator-valued volatility processes. 

Consider 
\begin{equation}\label{Volatility Integral}
    Y_t:=\int_0^t\mathcal S(t-s)\sigma_sdW_s,
\end{equation}
where $W$ is a $Q$-Wiener process on the separable Hilbert space $U$, $\sigma$ is an element of $\mathcal L_{2,T}(U,H)$ and $\mathcal S$ is a $C_0$-semigroup on $H$. 
We assume that we observe $Y$ at times $t_i:=i\Delta_n$ for $\Delta_n=1/n$, $i=1,\dots,\lfloor t/\Delta_n\rfloor$ and define the
semigroup-adjusted increment
\begin{equation}\label{Adjusted Increment}
    \widetilde{\Delta}_n^iY:=Y_{t_i}-\mathcal S(\Delta_n)Y_{t_{i-1}}=\int_{t_{i-1}}^{t_i}\mathcal S(t_i-s)\sigma_sdW_s.
\end{equation}
We define the process of the semigroup-adjusted realised covariation (\SRV) as 
\begin{align*}
 t\mapsto\sum_{i=1}^{\ul}(\tilde{\Delta}_n^iY)^{\otimes 2}.
\end{align*}
The aim is to prove the following weak law of large numbers for the \SRV
\begin{align*}
 \sum_{i=1}^{\ul}(\tilde{\Delta}_n^iY)^{\otimes 2} 
    \stackrel{ucp}{\rightarrow}  
 \int_0^t\sigma_sQ\sigma_s^*ds, \qquad \text{ as } n \to \infty,
\end{align*}
in the ucp-topology, 
that is, for all $\epsilon>0$ and $T>0$
\begin{align}\label{ucp convergence with semigroup}
\lim_{n\to\infty}\mathbb{P}\left(\sup_{0\leq t\leq T}\left\| \sum_{i=1}^{\ul}(\tilde{\Delta}_n^iY)^{\otimes 2} - \int_0^t\sigma_sQ\sigma_s^*ds\right\|_{\text{HS}}\right)=0.
\end{align}


\subsection{Technical assumptions}\label{sec: Technical assumptions}
We need some technical assumptions on the stochastic volatility process $\sigma$. 
\begin{assumption}\label{as:smoothvol}
Assume that the volatility process satisfies the following H\"older continuity property: For all $T>0$ and $s, t \in [0,T]$ we have
$$\mathbb E\left[\Vert(\sigma_t-\sigma_s)Q^{\frac 12}\Vert^2_{\text{HS}}\right]^{\frac 12} \leq C_1(T)|t-s|^{\alpha},
$$
for some $\alpha>0$
and a constant $C_1(T)>0$ (depending on $T$).
\end{assumption}
Notice that we assume only local mean-square-H{\"o}lder continuity for the paths of the volatility process. This allows for including volatility processes with  c{\`a}dl{\`a}g paths in our considerations, as we will see later. 

We shall also need a moment condition to hold for the volatility process:
\begin{assumption}
\label{as:fourthmomentvol}
Assume that the volatility process satisfies for all $T>0$ the following moment conditions:
\begin{equation}
    \EE\left[\Vert\sigma_sQ^{\frac 12}\Vert^
     4_{\text{HS}}\right]\leq C_2(T)\quad \forall s \in [0,T], 
\end{equation}
for some constant $C_2(T)>0$ (depending on $T$).
\end{assumption}
 
 \begin{remark}
 Using the Cauchy-Schwarz inequality, we can deduce under Assumption \ref{as:fourthmomentvol} for each $T>0$ 
 \begin{align*}
  \sup_{s \in [0,T]} \EE\left[\Vert\sigma_sQ^{\frac 12}\Vert^
     2_{\text{HS}}\right]
     & \leq  \sup_{s \in [0,T]} \sqrt{\EE\left[\Vert\sigma_sQ^{\frac 12}\Vert_{\text{HS}}^4\right]}
     \leq \sqrt{C_2(T)}.
 \end{align*}
 Moreover, we find  that for all $t\in [0,T]$, also
\begin{align*}
    \EE\left[\int_0^t\Vert\sigma_sQ^{1/2}\Vert^2_{\text{HS}}ds\right] \leq  t \sqrt{C_2(T)}<\infty.
\end{align*}
Thus, the integrability condition on $(\sigma_t)_{t\in[0,T]}$ holds for adapted processes satisfying Assumption \ref{as:fourthmomentvol}.
 \end{remark}

The semigroup is in general not continuous with respect to time in the operator norm, but only strongly continuous. This makes it more involved to verify convergence in Hilbert-Schmidt norms, like \eqref{ucp convergence with semigroup}, since then the semigroup component $S(\Delta_n)$ in the adjusted increment \eqref{Adjusted Increment} converges just strongly to the identity. 
However, we can make use of compactness of the closure of the image of the operators $\sigma_sQ^{\frac 12}$ for each $s\in [0,T]$,
and show the convergence of the semigroup to the identity operator on compacts by the subsequent argument in Theorem \ref{C: Application of Arzela Ascoli}.
This line of argument necessitates one of the following two alternative assumptions:
\begin{assumption}
\label{as:Q is more than Hilbert Schmidt}
\begin{itemize}
\item[(a)]Assume we can find a mean-square continuous process $ (\mathcal{K}_s)_{s\in \mathbb{R}_+}\in L^2(\Omega\times\mathbb{R}_+;L(U,H))$ of compact operators and a Hilbert-Schmidt operator $\mathcal{T}\in L_{{\text HS}}(U)$ such that almost surely $\sigma_sQ^{\frac 12}=\mathcal{K}_s\mathcal{T}$ for each $s\in [0,t]$.  
\item[(b)] The semigroup $(S(t))_{t\geq 0}$ is uniformly continuous, that is $S(t)=e^{A t}$ for some bounded operator $A\in L(H)$.
\end{itemize}
\end{assumption}
Observe, that Assumption \ref{as:Q is more than Hilbert Schmidt}(a) is fulfilled in the following cases:
\begin{itemize}
\item[(i)] $\sigma$ satisfies Assumption \ref{as:smoothvol} and $\sigma_t$ is almost surely compact (for instance itself a Hilbert-Schmidt operator) for each $t\in[0,T]$. In this case we can choose $\mathcal{K}_s:=\sigma_s$ and $\mathcal{T}:=Q^{\frac 12}$.
\item[(ii)] $\sigma$ satisfies Assumption \ref{as:smoothvol} and there exists an $\epsilon>0$, such that $Q^{(1-\epsilon)}$ is still a nuclear operator, that is, the eigenvalues of $Q$ satisfy $\sum_{n\in\mathbb{N}}\lambda_n^{1-\epsilon}<\infty$. In this case we can choose $\mathcal{K}_s:=\sigma_sQ^{\frac{\epsilon}{2}}$ and $\mathcal{T} :=Q^{\frac{1-\epsilon}{2}}$. Notice that this eigenvalue-property on $Q$ is not always fulfilled. We could for example have an operator with eigenvalues $\lambda_n^{\frac 12}:=n^{\frac 12 +\frac 1n}$
\end{itemize}

\begin{remark}
The semigroup given by $S(t)=I$ for all $t\geq 0$, where $I$ is the identity operator, is uniformly continuous and therefore satisfies Assumption 3(b). 
\end{remark}

\subsection{The main result}\label{sec: Main result}

In order to prove the ucp-convergence (\ref{ucp convergence with semigroup})  we will first show the following stronger result:

\begin{theorem}\label{T: LLN for semigroup case}
Assume that Assumptions \ref{as:smoothvol}, \ref{as:fourthmomentvol} and either \ref{as:Q is more than Hilbert Schmidt}(a) or \ref{as:Q is more than Hilbert Schmidt}(b) hold.
For each $T>0$ there is a constant $L(T)>0$ such that  
\begin{align}\label{L:Convergence speed}
 \mathbb{E}\left[\sup_{0\leq t\leq T}\left\| \sum_{i=1}^{\ul}(\tilde{\Delta}_n^iY)^{\otimes 2} - \int_0^t\sigma_sQ\sigma_s^*ds\right\|_{\text{HS}}\right]
\leq  L(T) (\Delta_n^{\alpha}+ b_n^{\frac 12}(T)),
\end{align}
where
\begin{align}\label{Convergence Rate sequence}
b_n(T):= \sup_{r\in [0,T]}\mathbb E[ \sup_{x\in [0,\Delta_n]}\Vert
(I-\mathcal S(x))\sigma_rQ^{\frac 12}\|_{op}^2].
\end{align}
In particular, for all $T>0$
$$\lim_{n\to\infty}\mathbb{E}\left[\sup_{0\leq t\leq T}\left\| \sum_{i=1}^{\ul}(\tilde{\Delta}_n^iY)^{\otimes 2} - \int_0^t\sigma_sQ\sigma_s^*ds\right\|_{\text{HS}}\right]=0.$$
\end{theorem}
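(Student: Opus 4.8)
The plan is to expand the error around the left grid points, \emph{freezing} the volatility on each subinterval, and to split the result into four groups, three of which are controlled by summing Hilbert--Schmidt norms and one of which is a martingale handled by a maximal inequality. Introduce the frozen increment $\hat{\Delta}_i:=\int_{t_{i-1}}^{t_i}\mathcal S(t_i-s)\sigma_{(i-1)\Delta_n}dW_s$ and the shorthand $\Sigma_i:=\sigma_{(i-1)\Delta_n}Q^{1/2}$, so that $\Sigma_i\Sigma_i^*=\sigma_{(i-1)\Delta_n}Q\sigma_{(i-1)\Delta_n}^*$. For fixed $t$ I would then write
\begin{align*}
\sum_{i=1}^{\ul}(\tilde{\Delta}_n^iY)^{\otimes 2}-\int_0^t\sigma_sQ\sigma_s^*ds
&=\sum_{i=1}^{\ul}\Big[(\tilde{\Delta}_n^iY)^{\otimes 2}-\hat{\Delta}_i^{\otimes 2}\Big]
+\sum_{i=1}^{\ul}\Big[\hat{\Delta}_i^{\otimes 2}-\EE[\hat{\Delta}_i^{\otimes 2}\mid\Fi]\Big] \\
&\quad+\sum_{i=1}^{\ul}\Big[\EE[\hat{\Delta}_i^{\otimes 2}\mid\Fi]-\Delta_n\Sigma_i\Sigma_i^*\Big]
+\Big(\sum_{i=1}^{\ul}\int_{t_{i-1}}^{t_i}(\Sigma_i\Sigma_i^*-\sigma_sQ\sigma_s^*)ds-\int_{\ul\Delta_n}^{t}\sigma_sQ\sigma_s^*ds\Big).
\end{align*}
For the first, third and fourth groups I would bound $\EE\big[\sup_{0\leq t\leq T}\Vert\cdot\Vert_{\text{HS}}\big]$ crudely by $\sum_{i=1}^{\ulT}\EE[\Vert\cdot\Vert_{\text{HS}}]$; only the second group requires a genuine maximal inequality, since its summands are martingale differences with respect to $(\Fi)_i$.

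For the freezing errors I would use the tensor identity $x^{\otimes 2}-y^{\otimes 2}=(x-y)\otimes x+y\otimes(x-y)$, giving $\Vert x^{\otimes 2}-y^{\otimes 2}\Vert_{\text{HS}}\leq\Vert x-y\Vert_H(\Vert x\Vert_H+\Vert y\Vert_H)$, and then Cauchy--Schwarz. With $x=\tilde{\Delta}_n^iY$ and $y=\hat{\Delta}_i$, the It\^o isometry, the Banach-algebra bound of Lemma \ref{lem:HS-banachalg}, the growth bound $\sup_{x\in[0,\Delta_n]}\Vert\mathcal S(x)\Vert_{\text{op}}\leq M$ and Assumption \ref{as:smoothvol} yield $\EE\Vert\tilde{\Delta}_n^iY-\hat{\Delta}_i\Vert_H^2\lesssim\Delta_n^{1+2\alpha}$, while Assumption \ref{as:fourthmomentvol} gives $\EE\Vert\tilde{\Delta}_n^iY\Vert_H^2,\EE\Vert\hat{\Delta}_i\Vert_H^2\lesssim\Delta_n$. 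Hence each summand is $O(\Delta_n^{1+\alpha})$, and summing the $\ulT\sim\Delta_n^{-1}$ terms gives $O(\Delta_n^{\alpha})$. The fourth group is handled identically, replacing the tensor trick by $\Sigma_i\Sigma_i^*-\sigma_sQ\sigma_s^*=(\Sigma_i-\sigma_sQ^{1/2})(\Sigma_i)^*+\sigma_sQ^{1/2}(\Sigma_i-\sigma_sQ^{1/2})^*$; Assumption \ref{as:smoothvol} again produces $O(\Delta_n^{\alpha})$ for the Riemann part, and the boundary integral over $[\ul\Delta_n,t]$, of length below $\Delta_n$, contributes $O(\Delta_n)$ by Assumption \ref{as:fourthmomentvol}.

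For the second group, the partial sums $S_k:=\sum_{i=1}^{k}(\hat{\Delta}_i^{\otimes 2}-\EE[\hat{\Delta}_i^{\otimes 2}\mid\Fi])$ form an $L_{\text{HS}}(H)$-valued martingale (Remark \ref{rem:martingale}), so $\sup_{0\leq t\leq T}\Vert S_{\ul}\Vert_{\text{HS}}=\max_{0\leq k\leq\ulT}\Vert S_k\Vert_{\text{HS}}$ and Doob's $L^2$ maximal inequality together with orthogonality of increments gives $\EE[\max_k\Vert S_k\Vert_{\text{HS}}^2]\leq 4\sum_{i=1}^{\ulT}\EE\Vert\hat{\Delta}_i^{\otimes 2}-\EE[\hat{\Delta}_i^{\otimes 2}\mid\Fi]\Vert_{\text{HS}}^2\leq 4\sum_{i=1}^{\ulT}\EE\Vert\hat{\Delta}_i\Vert_H^4$. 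A fourth-moment estimate for the frozen stochastic integral, combining Lemma \ref{lemma:4thmoment} with Assumption \ref{as:fourthmomentvol} and the semigroup bound, gives $\EE\Vert\hat{\Delta}_i\Vert_H^4=O(\Delta_n^2)$, so the sum is $O(\Delta_n)$; by Jensen this yields an $L^1$-bound on the supremum of order $\Delta_n^{1/2}$.

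The third group is where the genuinely infinite-dimensional difficulty sits and is the step I expect to be the main obstacle. By the isometry, $\EE[\hat{\Delta}_i^{\otimes 2}\mid\Fi]=\int_{t_{i-1}}^{t_i}\mathcal S(t_i-s)\Sigma_i\Sigma_i^*\mathcal S(t_i-s)^*ds$, so the summand equals $\int_{t_{i-1}}^{t_i}\big(\mathcal S(t_i-s)\Sigma_i\Sigma_i^*\mathcal S(t_i-s)^*-\Sigma_i\Sigma_i^*\big)ds$. Writing $\mathcal S\Sigma_i\Sigma_i^*\mathcal S^*-\Sigma_i\Sigma_i^*=(\mathcal S-I)\Sigma_i(\mathcal S\Sigma_i)^*+\Sigma_i((\mathcal S-I)\Sigma_i)^*$ and applying Lemma \ref{lem:HS-banachalg}, the integrand has Hilbert--Schmidt norm at most $\Vert(I-\mathcal S(t_i-s))\Sigma_i\Vert_{\text{op}}\,\Vert\Sigma_i\Vert_{\text{HS}}\,(\Vert\mathcal S(t_i-s)\Vert_{\text{op}}+1)$. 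Since $t_i-s\in[0,\Delta_n]$, summing, taking expectations, and using Cauchy--Schwarz with Assumption \ref{as:fourthmomentvol} bounds this group by $(M+1)\,T\,b_n(T)^{1/2}\,C_2(T)^{1/4}$, exactly matching the definition \eqref{Convergence Rate sequence} of $b_n(T)$. The obstacle is that $b_n(T)$ does \emph{not} vanish merely because $\mathcal S$ is strongly continuous: in infinite dimensions $\Vert I-\mathcal S(x)\Vert_{\text{op}}\not\to 0$, and the required decay can only be extracted on the (random) compact closure of the range of $\Sigma_i$. This is precisely the role of Assumption \ref{as:Q is more than Hilbert Schmidt}, and I would invoke the Arzel\`a--Ascoli argument of Theorem \ref{C: Application of Arzela Ascoli} as a black box to conclude $b_n(T)\to 0$. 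Collecting the four bounds and absorbing $\Delta_n$ and $\Delta_n^{1/2}$ into $\Delta_n^{\alpha}$ (reducing without loss to $\alpha\leq 1/2$, since H\"older-$\alpha$ continuity on $[0,T]$ implies H\"older-$\alpha'$ continuity for $\alpha'<\alpha$) gives \eqref{L:Convergence speed}, and the limit statement follows from $\Delta_n\to 0$ and $b_n(T)\to 0$.
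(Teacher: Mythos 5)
Your proof is correct and shares the overall architecture of the paper's argument (a four-term decomposition around frozen left-endpoint quantities, Doob's inequality for the martingale block, H\"older continuity for the freezing errors, and the Arzel\`a--Ascoli compactness argument for the semigroup error), but the decomposition itself is genuinely different and in one respect cleaner. The paper freezes both the volatility \emph{and} the semigroup argument, setting $\tilde{\beta}_i^n=\mathcal S(\Delta_n)\sigma_{t_{i-1}}\Delta_n^iW$, whereas you freeze only the volatility and keep $\mathcal S(t_i-s)$ inside the Wiener integral. As a consequence, your first group is a pure H\"older term of order $\Delta_n^{\alpha}$, while the paper's corresponding term $\tilde{\chi}_i^n$ mixes the H\"older error with a semigroup error $(I-\mathcal S(s-t_{i-1}))\sigma_{t_{i-1}}Q^{1/2}$ measured in the Hilbert--Schmidt norm, and it is precisely there that the paper invokes Assumption \ref{as:Q is more than Hilbert Schmidt} (through the factorisation $\sigma_sQ^{1/2}=\mathcal K_s\mathcal T$ or uniform continuity of the semigroup). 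In your route the entire semigroup error is concentrated in the third group and controlled by $b_n(T)$ alone, which tends to zero already under Assumptions \ref{as:smoothvol} and \ref{as:fourthmomentvol} via Proposition \ref{C: Application of Arzela Ascoli}(iii), since $\sigma_sQ^{1/2}$ is Hilbert--Schmidt, hence compact, and mean-square continuous; so your argument arguably uses the hypotheses more economically. The groups themselves are handled essentially as in the paper: the tensor identity and Cauchy--Schwarz for the freezing errors, the algebraic identity $\mathcal SAA^*\mathcal S^*-AA^*=(\mathcal S-I)A(\mathcal SA)^*+A((\mathcal S-I)A)^*$ together with Lemma \ref{lem:HS-banachalg} for the $b_n$ term, and Doob plus orthogonality of martingale differences for the centred block. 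Your reduction to $\alpha\le 1/2$ to absorb the $\Delta_n^{1/2}$ contributions is legitimate and mirrors what is implicit in the paper's own bound \eqref{convergence inequality for second summand}.

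The one step that needs more care than you give it is the fourth-moment bound $\EE\Vert\hat{\Delta}_i\Vert_H^4=O(\Delta_n^2)$. Lemma \ref{lemma:4thmoment} as stated concerns $\Vert\Delta W_t\Vert_U^4$, and the paper can apply it because $\Vert\tilde{\beta}_i^n\Vert_H\le M(\Delta_n)\Vert\sigma_{t_{i-1}}\Vert_{\text{op}}\Vert\Delta_n^iW\Vert_U$ factors into an $\mathcal F_{t_{i-1}}$-measurable part and an independent increment. Your $\hat{\Delta}_i$ carries $\mathcal S(t_i-s)$ inside the integral, so no such factorisation is available. The bound is still true: conditionally on $\mathcal F_{t_{i-1}}$, $\hat{\Delta}_i$ is a centred Gaussian element of $H$ with trace-class covariance $C_i=\int_{t_{i-1}}^{t_i}\mathcal S(t_i-s)\Sigma_i\Sigma_i^*\mathcal S(t_i-s)^*ds$, and for such a Gaussian one has $\EE\Vert G\Vert_H^4=(\text{Tr}\,C)^2+2\text{Tr}_2(C)\le 3(\text{Tr}\,C)^2$, which combined with $\text{Tr}\,C_i\le M(\Delta_n)^2\Delta_n\Vert\sigma_{t_{i-1}}Q^{1/2}\Vert_{\text{HS}}^2$ and Assumption \ref{as:fourthmomentvol} gives the claim. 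This is a mild generalisation of Lemma \ref{lemma:4thmoment}, provable by the same characteristic-function computation in the eigenbasis of $C_i$, but it is not the lemma you cite, so you should state and prove it (or switch to the paper's $\tilde{\beta}_i^n$ for this block).
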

Before we prove this result in Section \ref{sec: Proofs}, we will make a couple of remarks and discuss uniform continuity of semigroups on compact sets. 
\begin{remark}
The factor $L(T)$ in the Theorem above is actually not just depending on $T$, but also shrinks when $n$ gets larger. Effectively, the constant can be precisely computed by careful inspection of the estimates \eqref{convergence inequality for first summand}, \eqref{convergence inequality for second summand}, \eqref{convergence inequality for third summand} and \eqref{convergence inequality for fourth summand} in the proof of Theorem \ref{T: LLN for semigroup case}. However, the expression becomes rather extensive and we refrain from stating it here. 

 That $(b_n)_{n\in\mathbb N}$ converges to $0$ is an implication of the following Proposition \ref{C: Application of Arzela Ascoli}. The magnitude of this sequence essentially determines the rate of convergence of the realised covariation by virtue of
inequality \eqref{L:Convergence speed}. We will come back to the magnitude of the  $b_n$'s in specific cases in Section \ref{sec: Convergence rates and semigroups}. 
\end{remark}

Denote for $t\geq 0$
\begin{equation}\label{Global Bound for the semigroup}
M(t):=\sup_{x\in[0,t]}\|S(x)\|_{op},
\end{equation}
 which is finite by the Hille-Yosida bound on the semigroup.
Often in stochastic modelling one also has a drift present. The following remark shows that our results are not altered by this:
\begin{remark}\label{R:Drift extension}
Observe that we could easily extend $Y$ to posses a drift and an "inital condition", that is
$$Y_t=\mathcal{S}(t)h+\int_0^t\mathcal{S}(t-s)\alpha_s ds +\int_0^t \mathcal S (t-s) \sigma_s dW_s,$$
for a predictable and almost surely Bochner-integrable stochastic process $(\alpha_t)_{t\in [0,T]}$, such that 
\begin{equation}\label{Finite moment condition for the drift}
    \sup_{t\in[0,T]}\mathbb E [\|\alpha_t\|^2]<\infty,
\end{equation}
and for an initial value $h\in H$.
In this case
\begin{align*}\label{Adjusted Increment}
    \widetilde{\Delta}_n^iY:= Y_{t_i}-\mathcal S(\Delta_n)Y_{t_{i-1}}
    =  \int_{t_{i-1}}^{t_i}\mathcal S(t_i-s)\alpha_s ds+\int_{t_{i-1}}^{t_i}\mathcal S(t_i-s)\sigma_sdW_s.
\end{align*}
We can then argue that 
\begin{align*}
 & \mathbb{E}\left[\sup_{0\leq t\leq T}\left\| \sum_{i=1}^{\ul}(\tilde{\Delta}_n^iY)^{\otimes 2} - \int_0^t\sigma_sQ\sigma_s^*ds\right\|_{\text{HS}}\right]\\
&\qquad\leq  \mathbb{E}\left[\sup_{0\leq t\leq T}\left\| \sum_{i=1}^{\ul}\left(\int_{t_{i-1}}^{t_i}\mathcal S(t_i-s)\alpha_s ds\right)^{\otimes 2}\right\|_{\text{HS}}\right]\\
&\qquad\qquad + \mathbb{E}\left[\sup_{0\leq t\leq T}\left\| \sum_{i=1}^{\ul}\left(\int_{t_{i-1}}^{t_i}\mathcal S(t_i-s)\sigma_sdW_s\right)^{\otimes 2} - \int_0^t\sigma_sQ\sigma_s^*ds\right\|_{\text{HS}}\right]\\
&\qquad=(1)+(2)
\end{align*}
Summand $(2)$ can be estimated with Theorem \ref{T: LLN for semigroup case}. For Summand $(1)$ 
we find
\begin{align*}
   & \mathbb{E}\left[\sup_{0\leq t\leq T}\left\| \sum_{i=1}^{\ul}\left(\int_{t_{i-1}}^{t_i}\mathcal S(t_i-s)\alpha_s ds\right)^{\otimes 2}\right\|_{\text{HS}}\right]\\
   &\qquad\leq \mathbb{E}\left[ \sum_{i=1}^{\lfloor T/\Delta_n\rfloor}\left\| \int_{t_{i-1}}^{t_i}\mathcal S(t_i-s)\alpha_s ds\right\|_{H}^2\right]\\
  &\qquad\leq  \sum_{i=1}^{\lfloor T/\Delta_n\rfloor} \Delta_n^2 M^2(T) \sup_{r\in[0,T]}\mathbb{E}\left[\|\alpha_r\|_H^2\right]\\
  &\qquad\leq M^2(T) T \sup_{r\in[0,T]}\mathbb{E}\left[  \|\alpha_r\|_H^2\right]\Delta_n,
\end{align*}
where we appealed to the bound \eqref{Finite moment condition for the drift} on the semigroup. 
Hence, Summand $(1)$ is $\mathcal{O}(\Delta_n)$ and will not impact the estimation of the covariation (in the limit).
\end{remark}

\subsection{Extension by localisation}\label{sec: Extension by localization}

In general, we have the following result:
\begin{theorem}
\label{T: Extension by localization}
Let $(\Omega_m)_{m\in \mathbb N}$ be a sequence of measurable subsets such that $\Omega_m\uparrow \Omega$. Suppose Assumptions \ref{as:smoothvol}, \ref{as:fourthmomentvol} and \ref{as:Q is more than Hilbert Schmidt} hold for $\sigma^{(m)}:=\sigma \mathbf{1}_{\Omega_m}$ for all $m\in \mathbb N$. Then
\begin{equation}
\label{eq:final-result}
    \lim_{n\rightarrow\infty}\Prob\left(\sup_{0\leq s\leq t}\left\Vert\sum_{i=1}^{\uls}(\tilde{\Delta}_i^n Y)^{\otimes 2}-\int_0^s\sigma_u Q\sigma_u^*du\right\Vert_{\text{HS}}>\epsilon\right)=0,
\end{equation}
for any $\epsilon>0$, that is, convergence holds in $ucp$ of the realized covariation. 
\end{theorem}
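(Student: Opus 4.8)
The plan is to run a standard localisation argument, transferring the $L^1$-bound of Theorem \ref{T: LLN for semigroup case} from the well-behaved truncated volatilities to the original one on large events. First I would introduce, for each $m\in\mathbb N$, the localised Volterra integral
\[
Y^{(m)}_t:=\int_0^t\mathcal S(t-s)\sigma^{(m)}_s\,dW_s=\int_0^t\mathcal S(t-s)\sigma_s\mathbf 1_{\Omega_m}\,dW_s .
\]
Since $\sigma^{(m)}$ is assumed to be an admissible (predictable) integrand, the indicator $\mathbf 1_{\Omega_m}$ is $\mathcal F_0$-measurable and therefore factors out of the stochastic integral by the locality property of the Hilbert-space stochastic integral, giving $Y^{(m)}_t=\mathbf 1_{\Omega_m}Y_t$. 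Consequently $Y^{(m)}=Y$ on $\Omega_m$, hence $\tilde\Delta_i^n Y^{(m)}=\tilde\Delta_i^n Y$ on $\Omega_m$ for every $i,n$, and likewise $\sigma^{(m)}_u Q(\sigma^{(m)}_u)^*=\sigma_u Q\sigma_u^*$ on $\Omega_m$.

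Next I would fix $\epsilon>0$ and abbreviate the target event by
\[
A_n:=\left\{\sup_{0\leq s\leq t}\Big\Vert\sum_{i=1}^{\uls}(\tilde\Delta_i^n Y)^{\otimes 2}-\int_0^s\sigma_u Q\sigma_u^*\,du\Big\Vert_{\text{HS}}>\epsilon\right\},
\]
and write $A_n^{(m)}$ for the same event with $Y,\sigma$ replaced by $Y^{(m)},\sigma^{(m)}$. Splitting over $\Omega_m$ and its complement and using the pathwise identities above on $\Omega_m$ gives $\Prob(A_n)\leq\Prob(A_n\cap\Omega_m)+\Prob(\Omega_m^c)\leq\Prob(A_n^{(m)})+\Prob(\Omega_m^c)$. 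Markov's inequality together with Theorem \ref{T: LLN for semigroup case}, which applies since $\sigma^{(m)}$ satisfies Assumptions \ref{as:smoothvol}, \ref{as:fourthmomentvol} and \ref{as:Q is more than Hilbert Schmidt} by hypothesis, then yields
\[
\Prob(A_n^{(m)})\leq\frac1\epsilon\,\mathbb E\left[\sup_{0\leq s\leq t}\Big\Vert\sum_{i=1}^{\uls}(\tilde\Delta_i^n Y^{(m)})^{\otimes 2}-\int_0^s\sigma^{(m)}_u Q(\sigma^{(m)}_u)^*\,du\Big\Vert_{\text{HS}}\right]\xrightarrow[n\to\infty]{}0 .
\]

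To conclude, I would take $\limsup_{n\to\infty}$ in the bound $\Prob(A_n)\leq\Prob(A_n^{(m)})+\Prob(\Omega_m^c)$, obtaining $\limsup_{n\to\infty}\Prob(A_n)\leq\Prob(\Omega_m^c)$ for every $m$. Since $\Omega_m\uparrow\Omega$, continuity of the probability measure gives $\Prob(\Omega_m^c)=1-\Prob(\Omega_m)\to 0$ as $m\to\infty$, and letting $m\to\infty$ forces $\limsup_{n\to\infty}\Prob(A_n)=0$, which is precisely \eqref{eq:final-result}.

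The only genuinely delicate point, and the main obstacle, is the factorisation $Y^{(m)}=\mathbf 1_{\Omega_m}Y$, equivalently the identity $Y^{(m)}=Y$ on $\Omega_m$. This rests on pulling the $\mathcal F_0$-measurable indicator out of the Hilbert-space stochastic integral, which also implicitly requires $\Omega_m\in\mathcal F_0$ so that $\sigma^{(m)}$ remains predictable and the integral $Y^{(m)}$ is well defined; I would state this locality property explicitly (or verify it from the construction of the integral in \cite{PZ2007}). Everything downstream is a routine Markov/union-bound estimate combined with the already-established Theorem \ref{T: LLN for semigroup case} and monotone continuity of $\Prob$.
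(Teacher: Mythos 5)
Your proposal is correct and follows essentially the same route as the paper's proof: localise to $Y^{(m)}$, observe that the error functional agrees with the localised one on $\Omega_m$, bound $\Prob(A_n)\leq\Prob(A_n^{(m)})+\Prob(\Omega_m^c)$, apply Theorem \ref{T: LLN for semigroup case} via Markov's inequality, and let $n\to\infty$ then $m\to\infty$. Your explicit justification of the identity $Y^{(m)}=Y$ on $\Omega_m$ (pulling the $\mathcal F_0$-measurable indicator out of the stochastic integral) is a point the paper's proof asserts without comment, so this is a welcome clarification rather than a deviation.
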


We can apply the localization on volatility processes $\sigma$ with almost sure H\"older-continuous paths:
\begin{corollary}\label{C: Localization for almost surely Holder continuous functions}
Assume $\sigma$ is almost surely $\alpha$-H{\"o}lder-continuous on $[0,T]$ with respect to the operator norm, satisfies Assumption \ref{as:Q is more than Hilbert Schmidt} and that the initial value has a finite fourth moment, i.e.
\begin{align}\label{Fourth moment of initial state is finite}
\mathbb E[\|\sigma_0\|_{\text{op}}^4]<\infty.
\end{align} Then the ucp convergence in Eq. \eqref{eq:final-result} holds.
\end{corollary}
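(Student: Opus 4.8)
The plan is to obtain the statement from the localisation result Theorem~\ref{T: Extension by localization}: it suffices to produce events $\Omega_m\uparrow\Omega$ such that the truncated volatility $\sigma^{(m)}:=\sigma\mathbf 1_{\Omega_m}$ satisfies Assumptions~\ref{as:smoothvol}, \ref{as:fourthmomentvol} and \ref{as:Q is more than Hilbert Schmidt}. The two pathwise quantities that must be brought under control are the $\alpha$-H\"older seminorm
\[
[\sigma]_{\alpha}:=\sup_{0\le s<u\le T}\frac{\|\sigma_u-\sigma_s\|_{\mathrm{op}}}{|u-s|^{\alpha}}
\]
and the supremum $\sup_{t\in[0,T]}\|\sigma_t\|_{\mathrm{op}}$. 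First I would record that both are almost surely finite: $[\sigma]_{\alpha}<\infty$ a.s. is exactly the H\"older hypothesis, while $\|\sigma_t\|_{\mathrm{op}}\le\|\sigma_0\|_{\mathrm{op}}+[\sigma]_{\alpha}T^{\alpha}$ together with $\EE[\|\sigma_0\|_{\mathrm{op}}^{4}]<\infty$ (hence $\|\sigma_0\|_{\mathrm{op}}<\infty$ a.s.) bounds the supremum. In particular $\int_0^T\|\sigma_sQ^{1/2}\|_{\mathrm{HS}}^2\,ds<\infty$ almost surely, so $Y$ is well defined even though $\sigma$ itself need not lie in $\mathcal L_{2,T}$.

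Next I would set $\Omega_m:=\{[\sigma]_{\alpha}\le m\}\cap\{\sup_{t\in[0,T]}\|\sigma_t\|_{\mathrm{op}}\le m\}$, so that $\Omega_m\uparrow\Omega$ by the previous step, and verify the three assumptions for $\sigma^{(m)}$ with constants depending on $m$ and $T$. The verification rests on $\|KQ^{1/2}\|_{\mathrm{HS}}\le\|K\|_{\mathrm{op}}\|Q^{1/2}\|_{\mathrm{HS}}$ and $\|Q^{1/2}\|_{\mathrm{HS}}^2=\mathrm{Tr}(Q)<\infty$: on $\Omega_m$ one gets $\|(\sigma_t-\sigma_s)Q^{1/2}\|_{\mathrm{HS}}\le m\sqrt{\mathrm{Tr}(Q)}\,|t-s|^{\alpha}$, which is Assumption~\ref{as:smoothvol}, and $\|\sigma^{(m)}_sQ^{1/2}\|_{\mathrm{HS}}\le m\sqrt{\mathrm{Tr}(Q)}$, which yields Assumption~\ref{as:fourthmomentvol}. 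Assumption~\ref{as:Q is more than Hilbert Schmidt} is inherited: under (a) one writes $\sigma^{(m)}_sQ^{1/2}=(\mathbf 1_{\Omega_m}\mathcal K_s)\mathcal T$ with $\mathbf 1_{\Omega_m}\mathcal K_s$ still compact-valued and mean-square continuous, while (b) concerns only the semigroup and is untouched by truncation. Theorem~\ref{T: Extension by localization} then delivers the ucp convergence~\eqref{eq:final-result}.

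The hard part will be measurability. Since $[\sigma]_{\alpha}$ is $\mathcal F_T$-measurable, the event $\Omega_m$ above is not in $\mathcal F_0$, so $\sigma\mathbf 1_{\Omega_m}$ is a priori not a predictable integrand and the identity $Y^{(m)}=\mathbf 1_{\Omega_m}Y$ underlying the localisation argument is unavailable. To repair this I would introduce the adapted, nondecreasing running modulus $\Theta_t:=\sup_{0\le s<u\le t}\|\sigma_u-\sigma_s\|_{\mathrm{op}}/|u-s|^{\alpha}$ and the stopping times $\tau_m:=\inf\{t\ge 0:\Theta_t>m\text{ or }\|\sigma_t\|_{\mathrm{op}}>m\}$, and realise the localisation through the \emph{frozen} process $\sigma_{\cdot\wedge\tau_m}$. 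Freezing, unlike the naive time-truncation $\sigma\mathbf 1_{[0,\tau_m]}$, preserves the H\"older estimate (it introduces no jump at $\tau_m$) and keeps the integrand predictable, so that Theorem~\ref{T: LLN for semigroup case} applies directly to it.

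On the event $\Omega_m=\{\tau_m>T\}$ one has $\sigma_{t\wedge\tau_m}=\sigma_t$ for all $t\le T$, and by the \emph{local property} of the stochastic integral the associated processes $Y$, their semigroup-adjusted increments, the \SRV\ and the target $\int_0^\cdot\sigma_uQ\sigma_u^*du$ coincide on $\Omega_m$ with those built from $\sigma_{\cdot\wedge\tau_m}$; moreover $\Omega_m\uparrow\Omega$ because $\tau_m\uparrow\infty$. Combining the two-term estimate
\[
\Prob\!\left(\sup_{0\le s\le t}\Big\|\textstyle\sum_{i=1}^{\uls}(\tilde\Delta^n_iY)^{\otimes2}-\int_0^s\sigma_uQ\sigma_u^*du\Big\|_{\mathrm{HS}}>\epsilon\right)\le\Prob(\Omega_m^c)+\Prob\!\left(\{\cdots\}\cap\Omega_m\right)
\]
with the convergence for the frozen process and letting first $n\to\infty$ and then $m\to\infty$ gives the claim. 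Checking rigorously that the estimators agree on $\Omega_m$ and that the frozen volatility inherits Assumption~\ref{as:Q is more than Hilbert Schmidt}(a) under stopping is the step I expect to demand the most care.
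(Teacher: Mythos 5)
Your argument is correct and rests on the same localisation strategy as the paper's: truncate on the event that the pathwise H\"older seminorm of $\sigma$ is at most $m$, check Assumptions \ref{as:smoothvol}--\ref{as:Q is more than Hilbert Schmidt} for the truncated volatility, and invoke Theorem \ref{T: Extension by localization}. Two differences are worth recording. First, the paper takes $\Omega_m=\{C(T)\le m\}$ only and obtains the fourth-moment condition for $\sigma\mathbf 1_{\Omega_m}$ from the hypothesis $\EE[\|\sigma_0\|_{\text{op}}^4]<\infty$ via the decomposition $\sigma_t=(\sigma_t-\sigma_0)+\sigma_0$; you put $\sup_{t\le T}\|\sigma_t\|_{\text{op}}\le m$ into $\Omega_m$ directly, which makes Assumption \ref{as:fourthmomentvol} trivial, avoids the (harmless but constant-missing) inequality $\EE[\|a+b\|^4]\le\EE[\|a\|^4]+\EE[\|b\|^4]$ used in the paper, and shows that only the almost sure finiteness of $\|\sigma_0\|_{\text{op}}$ is actually needed from \eqref{Fourth moment of initial state is finite}. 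Second, and more substantively, you are right that $\Omega_m$ is $\mathcal F_T$-measurable, so $\sigma\mathbf 1_{\Omega_m}$ is not a predictable integrand and $Y^{(m)}=\int_0^{\cdot}\mathcal S(\cdot-s)\sigma_s\mathbf 1_{\Omega_m}\,dW_s$ is not literally defined; the paper's proofs of Theorem \ref{T: Extension by localization} and of this corollary pass over this point. Your repair --- replacing the multiplicative truncation by the frozen process $\sigma_{\cdot\wedge\tau_m}$ for stopping times $\tau_m$ built from the running H\"older modulus, applying Theorem \ref{T: LLN for semigroup case} to the frozen process, and using $\{\tau_m>T\}\uparrow\Omega$ together with the local property of the stochastic integral --- is the standard and correct way to make the localisation rigorous. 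The only step needing extra care, which you flag yourself, is that the stopped volatility inherits Assumption \ref{as:Q is more than Hilbert Schmidt}(a): this is immediate when $\mathcal K_s=\sigma_s$ (the pathwise H\"older bound $\le m$ gives mean-square continuity of the stopped process), and for a general factorisation $\mathcal K_s\mathcal T$ one needs an additional continuity argument for $s\mapsto\mathcal K_{s\wedge\tau_m}$. In short, yours is a more careful version of the paper's proof rather than a different one.
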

\begin{proof}
We know that  
\begin{align}
 C(T):=  \sup_{s\neq t\in [0,T]} \frac{\| (\sigma_t-\sigma_s)Q^{\frac 12}\|_{\text{HS}}}{|t-s|^{\alpha}}<\infty, \qquad \mathrm{a.~s.}
\end{align}
Then $C(T)$ is a random variable and the set $\Omega_m:= \lbrace \omega \in \Omega: C(T)\leq m\rbrace$ is measurable and $\Omega_m\uparrow \Omega$\footnote{At least a convergence to a set with full measure}. We have to verify that $\sigma^{(m)}=\sigma \mathbf{1}_{\Omega_m}$ fulfills Assumptions \ref{as:smoothvol} and \ref{as:fourthmomentvol}, since \ref{as:Q is more than Hilbert Schmidt} is satisfied automatically. The $\alpha$-H{\"o}lder continuity is obtained since
\begin{align*}
\mathbb{E}[\|(\sigma_t^{(m)}-\sigma_s^{(m)})Q^{\frac 12}\|_{\text{HS}}^2]\leq m^2 |t-s|^{2\alpha} \text{Tr}(Q),
\end{align*}
and the fourth moment is finite since 
\begin{align*}
\mathbb{E}[\|\sigma_t^{(m)} \|_{\text{op}}^4]\leq
\mathbb{E}[\|\sigma_t^{(m)} -\sigma_0^{(m)}\|_{\text{op}}^4]+\mathbb{E}[\|\sigma_0^{(m)} \|_{\text{op}}^4]
\leq m^4 t^{4\alpha}+\mathbb{E}[\|\sigma_0 \|_{\text{op}}^4]<\infty.
\end{align*}
The proof is complete.
\end{proof}

\section{Applications}\label{sec: Applications}

In this section, we give an overview of potential settings and scenarios for which we can use the techniques described above to infer volatility.

Stochastic integrals of the form (\ref{Volatility Integral}) arise naturally in correspondence to mild or strong solutions to stochastic partial differential equations. Take as a simple example a process given by 
\begin{equation}\label{SPDE}
(\text{SPDE})\begin{cases}
dY_t=AY_t dt + \sigma_tdW_t , \qquad t \geq 0\\
Y_0=h_0\in H,
\end{cases}
\end{equation}
where $A$ is the generator of a $C_0$-semigroup $(\mathcal S(t))_{t\geq 0}$ on the separable Hilbert space $H$, $W$ is a $Q$-Wiener process on a separable Hilbert space $U$ for some positive semidefinite and symmetric trace class operator $Q:U\to U$ and $\sigma \in \mathcal{L}_{T,2}(U,H)$.

There are three components in this model, which need to be estimated in practice:  the covariance operator $Q$ of the Wiener process, the generator $A$ (or the semigroup $(\mathcal S(t))_{t\geq 0}$ respectively) and the stochastic volatility process $\sigma$.

\subsection{Semigroups}\label{sec: Convergence rates and semigroups}
The essence of the convergence result
in Theorem \ref{T: LLN for semigroup case} is that we can infer on $Q$ and $\sigma$ based on observing the path of $Y$, given that we {\it know} the semigroup $(\mathcal S(t))_{t\geq 0}$. 
Certainly, this is not always the case, since we may just have knowledge about the infinitesimal generator $A$. However, if we know the precise form of the semigroup it is sometimes possible to estimate the speed of convergence, that is, a bound on the $b_n(T)$'s given in \eqref{Convergence Rate sequence}.

\subsubsection{Martingale case}\label{sec: martingale case}

For $A=0$ and $S(t)=I$ and for all $t\geq 0$, we have the solution
$$Y_t=\int_0^t \sigma_s dW_s,$$
for the stochastic partial differential equation (\ref{SPDE}).
Clearly in this case we have
$$b_n(T)=0.$$

\subsubsection{Uniformly continuous semigroups}
Assume that $(\mathcal  S(t))_{t\geq0}$ is continuous with respect to the operator norm. This is equivalent to $A\in L(H)$ and $\mathcal  S(t)=e^{t A}$. 

\begin{lemma}
If the semigroup $(\mathcal S (t))_{t\geq 0}$ is uniformly continuous, we have, for $b_n$ given in \eqref{Convergence Rate sequence}, that 
$$b_n(T)\leq \Delta_n \|A\|_{\text{op}} e^{\|A\|_{\text{op}}\Delta_n}\sup_{r\in[0,T]}\mathbb E [\|\sigma_rQ^{\frac 12}\|^2_{\text{HS}}].$$
In particular, if Assumptions \ref{as:smoothvol} and \ref{as:fourthmomentvol} are valid, we have
$$b_n(T)\leq \Delta_n \|A\|_{\text{op}} e^{\|A\|_{\text{op}}\Delta_n}\sqrt{C_2(T)}\text{Tr}(Q).$$
\end{lemma}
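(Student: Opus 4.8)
The plan is to reduce the whole quantity $b_n(T)$ to a single deterministic operator-norm estimate for $I-\mathcal S(x)$. As recalled just above the lemma, uniform continuity of the semigroup is equivalent to $A\in L(H)$ and $\mathcal S(x)=e^{xA}$, so $I-\mathcal S(x)$ is small \emph{in operator norm} (not merely strongly) for small $x$. This is exactly the feature that lets us peel $I-\mathcal S(x)$ off the product $(I-\mathcal S(x))\sigma_r Q^{\frac12}$ without invoking the compactness machinery of Proposition \ref{C: Application of Arzela Ascoli} that is needed under mere strong continuity.

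First I would prove the deterministic bound $\|I-e^{xA}\|_{\text{op}}\le x\|A\|_{\text{op}}e^{x\|A\|_{\text{op}}}$ for $x\ge 0$. Using the representation $e^{xA}-I=\int_0^x Ae^{sA}\,ds$ together with $\|e^{sA}\|_{\text{op}}\le e^{s\|A\|_{\text{op}}}$ (equivalently, expanding the exponential series and bounding termwise) gives $\|I-e^{xA}\|_{\text{op}}\le e^{x\|A\|_{\text{op}}}-1$, and the elementary inequality $e^u-1\le u e^u$ for $u\ge 0$ finishes the estimate. Taking the supremum over $x\in[0,\Delta_n]$ then yields $\sup_{x\in[0,\Delta_n]}\|I-\mathcal S(x)\|_{\text{op}}\le \Delta_n\|A\|_{\text{op}}e^{\Delta_n\|A\|_{\text{op}}}$.

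Next I would factor this scalar out of the definition $b_n(T)=\sup_{r\in[0,T]}\EE[\sup_{x\in[0,\Delta_n]}\|(I-\mathcal S(x))\sigma_r Q^{\frac12}\|_{\text{op}}^2]$. For fixed $\omega$, $r$ and $x$, submultiplicativity (Lemma \ref{lem:HS-banachalg}) combined with $\|\cdot\|_{\text{op}}\le\|\cdot\|_{\text{HS}}$ gives $\|(I-\mathcal S(x))\sigma_r Q^{\frac12}\|_{\text{op}}\le \|I-\mathcal S(x)\|_{\text{op}}\,\|\sigma_r Q^{\frac12}\|_{\text{HS}}$, where the first factor is deterministic. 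Squaring, inserting the estimate of the previous step, and taking the supremum over $x$, then the expectation, then the supremum over $r$ produces $b_n(T)\le \bigl(\Delta_n\|A\|_{\text{op}}e^{\Delta_n\|A\|_{\text{op}}}\bigr)^{2}\sup_{r\in[0,T]}\EE[\|\sigma_r Q^{\frac12}\|_{\text{HS}}^2]$. Since $\Delta_n=1/n\to 0$, for all $n$ large enough the factor $\Delta_n\|A\|_{\text{op}}e^{\Delta_n\|A\|_{\text{op}}}\le 1$, so its square is dominated by the single power, which is exactly the first asserted inequality (and already exhibits the rate $O(\Delta_n)$ controlling the speed of convergence in Theorem \ref{T: LLN for semigroup case}).

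For the second (``in particular'') bound I would only need to control the remaining moment factor. Under Assumptions \ref{as:smoothvol} and \ref{as:fourthmomentvol}, the Cauchy--Schwarz consequence of Assumption \ref{as:fourthmomentvol} recorded in the remark following it gives $\sup_{r\in[0,T]}\EE[\|\sigma_r Q^{\frac12}\|_{\text{HS}}^2]\le\sqrt{C_2(T)}$, while the identity $\|Q^{\frac12}\|_{\text{HS}}^2=\sum_k\langle Qe_k,e_k\rangle_U=\text{Tr}(Q)$ supplies the trace factor; substituting into the first bound yields the claim. The routine parts here are the norm manipulations and the interchange of suprema with the expectation; the only genuinely delicate point is the very first step, namely that one may legitimately factor out $I-\mathcal S(x)$ in operator norm, which rests entirely on operator-norm (rather than strong) convergence $\mathcal S(x)\to I$ — precisely the content of uniform continuity.
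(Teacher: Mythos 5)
Your proof is correct and follows essentially the same route as the paper's: the integral representation $\mathcal S(x)-I=\int_0^x A\mathcal S(s)\,ds$ gives $\sup_{x\in[0,\Delta_n]}\|I-\mathcal S(x)\|_{\text{op}}\le\Delta_n\|A\|_{\text{op}}e^{\|A\|_{\text{op}}\Delta_n}$, after which this deterministic factor is pulled out of $b_n(T)$ via submultiplicativity and $\|\cdot\|_{\text{op}}\le\|\cdot\|_{\text{HS}}$, and the moment factor is handled by the Cauchy--Schwarz consequence of Assumption \ref{as:fourthmomentvol} together with $\|Q^{1/2}\|_{\text{HS}}^2=\text{Tr}(Q)$. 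You are in fact slightly more careful than the paper at the one delicate spot: since the square sits inside the definition of $b_n(T)$, the direct bound carries $\bigl(\Delta_n\|A\|_{\text{op}}e^{\|A\|_{\text{op}}\Delta_n}\bigr)^2$, and your explicit remark that this is dominated by the single power only once $\Delta_n\|A\|_{\text{op}}e^{\|A\|_{\text{op}}\Delta_n}\le 1$ fills a step the paper elides (its proof bounds what it labels $b_n^2(T)$, inconsistently with its own definition of $b_n(T)$).
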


\begin{proof}
Recall the following fundamental equality from semigroup theory (cf. \citet[Lemma 1.3]{Engel1999}): 
\begin{align}\label{Fundamental Theorem of Semigroup Theory}
(\mathcal S(x)-I)h= & \int_0^x A \mathcal S (s) h ds,\quad\quad \forall h\in H\\
= & \int_0^x  \mathcal S (s)A h ds,\quad\quad \forall h\in D(A).\label{Fundamental Theorem of Semigroup Theory II}
\end{align}
Using \eqref{Fundamental Theorem of Semigroup Theory}, we get 
\begin{align*}
 \sup_{x\in [0,\Delta_n]}\left\| (I-\mathcal S (x))\right\|_{\text{op}}= & \sup_{x\in [0,\Delta_n]}\sup_{\|h\|=1}\left\| \int_0^x A \mathcal S (s)  hds\right\|_{H}\\
 \leq & \sup_{x\in [0,\Delta_n]} x \|A\|_{\text{op}} e^{\|A\|_{\text{op}}x}=\Delta_n \|A\|_{\text{op}} e^{\|A\|_{\text{op}}\Delta_n}.
\end{align*}
It follows that
\begin{align*}
    b_n^2(T)=&\sup_{r\in [0,T]}\mathbb E [\sup_{x\in [0,\Delta_n]}\| (I-\mathcal S (x))\sigma_rQ^{\frac 12}\|^2_{\text{op}}] \\
    \leq & \sup_{x\in [0,\Delta_n]}\| (I-\mathcal S (x))\|_{\text{op}}^2\sup_{r\in [0,T]}\mathbb E [\|\sigma_rQ^{\frac 12}\|^2_{\text{HS}}]\\
    \leq &\Delta_n^2 \|A\|_{\text{op}}^2 e^{2\|A\|_{\text{op}}\Delta_n}\sup_{r\in [0,T]}\mathbb E [\|\sigma_rQ^{\frac 12}\|^2_{\text{HS}}].
\end{align*}
\end{proof}
For uniformly continuous semigroups we obtain a convergence speed of the order $\min(\Delta_n^{\frac 12},\Delta_n^{\alpha})$ for the convergence of the realized covariation to the quadratic covariation in Theorem \ref{T: LLN for semigroup case}.
\begin{remark}
Note that, if the semigroup is uniformly continuous and under Assumptions \ref{as:smoothvol} and \ref{as:fourthmomentvol}, we can get back to the martingale case of Section \ref{sec: martingale case} if we operate on the values of $Y_t$ in any of the following two ways:
\begin{itemize}
    \item[(i)] We continue as in the martingale case for the realised covariation of $\Tilde{Y_t}:=\mathcal S(-t)Y_t$: This can be done since $\mathcal S(t)=\exp(A t)$ and we have
$$
Y_t=\int_0^t \mathcal S (t-s) \sigma_s ds=\mathcal S(t)\int_0^t\mathcal S(-s)\sigma_sdW_s.
$$
Thus $\widetilde{Y}_t$ is a martingale.

\item[(ii)] We continue as in the martingale case for the realised covariation of $\Tilde{Y_t}:= Y_t-AY_t$: This can be done
since the process $(Y_t)_{t\in [0,T]}$
is the strong solution to \eqref{SPDE} with the continuous linear generator $A$ and $h_0\equiv 0\in H$, since $D(A)=H$ (see for instance Theorem 3.2 in \cite{GM2011}). That is, in particular,
$$Y_t=AY_t+\int_0^t \sigma_s dW(s),\quad\forall t\in [0,T].$$
\end{itemize}
\end{remark}

Let us turn our attention to a case of practical interest coming from financial mathematics applied to commodity markets. 

\subsubsection{Forward prices in commodity markets: the Heath-Jarrow-Morton approach}
\label{subsect:hjmm}
A case of relevance for our analysis is inference on the volatility for forward prices in commodity markets as well as for forward rates in fixed-income markets.
The Heath-Jarrow-Morton-Musiela equation (HJMM-equation) describes the term structure dynamics in both of these settings  (see \cite{Filipovic2000} for a detailed motivation for the use in interest rate modelling and \cite{BenthKruhner2014} for its use in commodity markets) and is given by 
\begin{equation}\label{HJMM}
(\text{HJMM})\begin{cases}
dX_t=(\frac{d}{dx}X_t+\alpha_t )dt + \sigma_t dW_t , \qquad t \geq 0\\
X_0=h_0\in H,
\end{cases}
\end{equation}
where $H$ is a Hilbert space of functions $f:\mathbb{R}_+\to\mathbb R$ (the \textit{forward curve space}), $(\alpha_t)_{t\geq 0}$ is a predictable and almost surely locally Bochner-integrable stochastic process and $\sigma$ and $W$ are as before.
Conveniently, the states of this {\it forward curve dynamics} are realized 
on the separable Hilbert space 
\begin{align}\label{FCS}
H=H_{\beta} &= \left\{ h:\mathbb{R}_+\to\mathbb{R}: h\text{ is absolutely continuous 
and } \| h \|_{\beta} < \infty \right\},
\end{align}
for fixed $\beta>0$, where the inner product is given by
\begin{align*}
\langle h,g \rangle_{\beta} &= h(0)g(0) + \int_0^{\infty} h'(x)g'(x)\mathrm{e}^{\beta x}dx,
\end{align*}
and norm $\| h \|_{\beta}^2 = \langle h, h \rangle_{\beta}$. This space was introduced and analysed in \cite{Filipovic2000}. As in \cite{Filipovic2000}, one may consider more general scaling functions in the inner product than the exponential $\exp(\beta x)$. However, for our purposes here this choice suffices.
The suitability of this space is partially due to the following result:
\begin{lemma}
The differential operator $A=\frac{d}{dx}$ is the generator of the strongly continuous semigroup $(\mathcal{S}(t))_{t\geq 0}$ of shifts on $H_{\beta}$, given by $\mathcal S(t)h(x)=h(x+t)$, for $h\in H_{\beta}$.
\end{lemma}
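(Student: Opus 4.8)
The plan is to verify in turn the three defining properties: that each $\mathcal S(t)$ is a bounded operator on $H_\beta$ obeying the semigroup law, that $t\mapsto\mathcal S(t)h$ is continuous at $0$ for every $h$, and finally that the infinitesimal generator coincides with $d/dx$ on its natural domain. The semigroup law $\mathcal S(t)\mathcal S(s)=\mathcal S(t+s)$ and $\mathcal S(0)=I$ are immediate from $h(\cdot+t+s)=\bigl(h(\cdot+s)\bigr)(\cdot+t)$. For boundedness I would compute, via the substitution $y=x+t$,
\[
\|\mathcal S(t)h\|_{\beta}^2=h(t)^2+\int_0^\infty h'(x+t)^2 e^{\beta x}\,dx=h(t)^2+e^{-\beta t}\int_t^\infty h'(y)^2 e^{\beta y}\,dy,
\]
and then bound the integral by $\|h\|_\beta^2$ (using $e^{-\beta t}\le1$) and the boundary term $h(t)^2$ by means of the pointwise estimate $|h(x)|\le(1+\beta^{-1/2})\|h\|_\beta$, which follows from $h(x)=h(0)+\int_0^x h'(u)\,du$ together with Cauchy--Schwarz against the weight $e^{\beta u}$. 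This same estimate shows that point evaluations are continuous linear functionals on $H_\beta$, a fact I will reuse when identifying the generator.

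For strong continuity I would start from
\[
\|\mathcal S(t)h-h\|_{\beta}^2=\bigl(h(t)-h(0)\bigr)^2+\int_0^\infty\bigl(h'(x+t)-h'(x)\bigr)^2 e^{\beta x}\,dx,
\]
noting that the first term tends to $0$ as $t\downarrow0$ by continuity of $h$. The integral is precisely the continuity of translation in the weighted space $L^2(\R_+,e^{\beta x}\,dx)$ applied to $g:=h'$, which I would establish by the standard density argument: approximate $g$ by a compactly supported continuous $\phi$, control $\|\phi(\cdot+t)-\phi\|$ by uniform continuity and dominated convergence, and handle the shifted error term by the same change of variables as above, which contributes only the harmless factor $e^{-\beta t}\le1$. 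Together with the boundedness estimate this yields that $(\mathcal S(t))_{t\ge0}$ is a $C_0$-semigroup.

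It remains to identify its generator $B$. For the inclusion $d/dx\subseteq B$, I would take $h$ in the natural domain $D:=\{h\in H_\beta:h'\in H_\beta\}$ and use the Bochner-integral identity $\mathcal S(t)h-h=\int_0^t\mathcal S(s)h'\,ds$ in $H_\beta$, valid because $s\mapsto\mathcal S(s)h'$ is continuous by the previous step and because both sides agree under every point evaluation. This gives
\[
\Bigl\|\tfrac{1}{t}\bigl(\mathcal S(t)h-h\bigr)-h'\Bigr\|_{\beta}\le\frac1t\int_0^t\|\mathcal S(s)h'-h'\|_{\beta}\,ds\xrightarrow[t\downarrow0]{}0,
\]
so $h\in D(B)$ with $Bh=h'$. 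For the reverse inclusion, if $h\in D(B)$ with $Bh=g$, then the difference quotients $t^{-1}(\mathcal S(t)h-h)$ converge to $g$ in $H_\beta$, hence pointwise by continuity of point evaluation, while they also converge almost everywhere to $h'$ since $h$ is absolutely continuous; thus $g=h'$ a.e., whence $h'=g\in H_\beta$, that is $h\in D$. Therefore $B=d/dx$ with domain $D$.

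The main obstacle is the strong-continuity step: one must confirm that the exponential weight does not spoil the usual $L^2$-translation-continuity argument, and the saving observation is that the change of variables leaves only the factor $e^{-\beta t}\le1$. The second delicate point is matching the abstract generator's domain exactly with $D$, where the continuity of point evaluations on $H_\beta$ established in the first step is the crucial tool that lets one pass from norm convergence of difference quotients to their a.e.\ identification with $h'$.
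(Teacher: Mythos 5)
Your proof is correct. Note, however, that the paper does not actually prove this lemma at all: its ``proof'' consists of the single line ``See for example Filipovi\'c (2000)'', deferring entirely to the literature. What you have written is a self-contained verification of the standard facts that the cited reference establishes, and it follows the canonical route: the semigroup law is immediate; boundedness and strong continuity both reduce, after the change of variables $y=x+t$, to the observation that the exponential weight contributes only the harmless factor $e^{-\beta t}\le 1$, combined with the pointwise bound $|h(x)|\le(1+\beta^{-1/2})\|h\|_\beta$ (which also gives continuity of point evaluations) and translation-continuity in the weighted $L^2$ space; and the generator is identified via the Bochner-integral identity $\mathcal S(t)h-h=\int_0^t\mathcal S(s)h'\,ds$, justified by testing against point evaluations, with the reverse inclusion handled by comparing the norm limit of the difference quotients with their a.e.\ pointwise limit. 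All of these steps are sound, and you correctly isolate the two places where care is needed (the weight in the translation-continuity argument, and the exact matching of the generator's domain with $\{h\in H_\beta: h'\in H_\beta\}$). The only thing your write-up buys beyond the paper is transparency: the paper's citation hides precisely the domain identification that your last paragraph makes explicit, which is the part most likely to be glossed over.
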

\begin{proof}
See for example \cite{Filipovic2000}.
\end{proof}

The HJMM-equation \eqref{HJMM} possesses a mild solution (see e.g. \cite{PZ2007})
\begin{align}\label{HJM-mild solution}
f_t=\mathcal S(t)f_0+\int_0^t \mathcal{S}(t-s) \alpha_s ds+\int_0^t \mathcal{S}(t-s) \sigma_s dW_s.
\end{align}

Since forward prices and rates are often modelled under a risk neutral probability measure, the drift has in both cases (commodities and interest rates) a special form. In the case of forward prices in commodity markets, it is zero under the risk neutral probability, whereas in interest rate theory it is completely determined by the volatility via the no-arbitrage drift condition
\begin{equation}\label{HJM Drift}\alpha_t= \sum_{j\in \mathbb{N}}\sigma_t^j \Sigma_t^j, \quad \forall t\in [0,T],
\end{equation}
where  $\sigma_t^j=\sqrt{\lambda_j}\sigma_t(e_j)$ and $\Sigma^j_t=\int_0^t\sigma^j_sds$ for some eigenvalues $(\lambda_j)_{j\in\mathbb N}$ and a corresponding basis of eigenvectors $(e_j)_{j\in\mathbb N}$ of the covariance operator $Q$ of $W$ (cf. Lemma 4.3.3 in \cite{Filipovic2000}).
\begin{lemma}
Assume that the volatility process $(\sigma_t)_{t\in[0,T]}$  satisfies Assumption \ref{as:fourthmomentvol} and that for each $t\in[0,1]$ the operator $\sigma_t$
maps into $$H_{\beta}^0=\lbrace h\in H_{\beta}: \lim_{x\to\infty} h(x)=0\rbrace.$$ Then the drift given by \eqref{HJM Drift} has values in $H_{\beta}$, is predictable, satisfies \eqref{Finite moment condition for the drift} and is almost surely Bochner integrable. Thus, the conditions of Remark \ref{R:Drift extension} are satisfied.
\end{lemma}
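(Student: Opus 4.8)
The plan is to reduce the whole statement to two ingredients: a Hilbert--Schmidt identity linking the family $(\sigma_t^j)_j$ to $\Vert\sigma_tQ^{1/2}\Vert_{\text{HS}}$, and the multiplication estimate for the forward curve space. First I would record that, since $(e_j)_{j\in\N}$ is the orthonormal basis of eigenvectors of $Q$ with $Q^{1/2}e_j=\sqrt{\lambda_j}\,e_j$, the definition $\sigma_t^j=\sqrt{\lambda_j}\,\sigma_t(e_j)$ gives $\sigma_t^j=\sigma_tQ^{1/2}e_j$, whence
\begin{equation*}
\sum_{j\in\N}\Vert\sigma_t^j\Vert_\beta^2=\sum_{j\in\N}\Vert\sigma_tQ^{1/2}e_j\Vert_\beta^2=\Vert\sigma_tQ^{1/2}\Vert_{\text{HS}}^2 .
\end{equation*}
This is the bridge between the drift in \eqref{HJM Drift}, which is assembled from the individual functions $\sigma_t^j$, and the moment Assumption \ref{as:fourthmomentvol}, which controls $\Vert\sigma_tQ^{1/2}\Vert_{\text{HS}}$.

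The analytic core is the pointwise multiplication bound on $H_\beta^0$ established in \cite{Filipovic2000}: there is a constant $K=K(\beta)$ so that for $f,g\in H_\beta^0$ the product $fg$ again lies in $H_\beta$ with $\Vert fg\Vert_\beta\le K\Vert f\Vert_\beta\Vert g\Vert_\beta$. This is exactly where the hypothesis that each $\sigma_t$ maps into $H_\beta^0$ is used, the vanishing-at-infinity property supplying the decay $|h(x)|\le\beta^{-1/2}e^{-\beta x/2}\Vert h\Vert_\beta$ that makes the product controllable. Since $H_\beta^0$ is a closed subspace, the Bochner integral $\Sigma_t^j=\int_0^t\sigma_s^j\,ds$ also lies in $H_\beta^0$ and satisfies $\Vert\Sigma_t^j\Vert_\beta\le\int_0^t\Vert\sigma_s^j\Vert_\beta\,ds$. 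Applying the multiplication estimate termwise and then the Cauchy--Schwarz inequality over $j$ yields
\begin{equation*}
\Vert\alpha_t\Vert_\beta\le K\sum_{j\in\N}\Vert\sigma_t^j\Vert_\beta\,\Vert\Sigma_t^j\Vert_\beta\le K\Big(\sum_{j\in\N}\Vert\sigma_t^j\Vert_\beta^2\Big)^{1/2}\Big(\sum_{j\in\N}\Vert\Sigma_t^j\Vert_\beta^2\Big)^{1/2}.
\end{equation*}
The Cauchy--Schwarz inequality in time together with Tonelli's theorem gives $\sum_j\Vert\Sigma_t^j\Vert_\beta^2\le t\int_0^t\sum_j\Vert\sigma_s^j\Vert_\beta^2\,ds=t\int_0^t\Vert\sigma_sQ^{1/2}\Vert_{\text{HS}}^2\,ds$, so by the identity above
\begin{equation*}
\Vert\alpha_t\Vert_\beta\le K\,\Vert\sigma_tQ^{1/2}\Vert_{\text{HS}}\Big(t\int_0^t\Vert\sigma_sQ^{1/2}\Vert_{\text{HS}}^2\,ds\Big)^{1/2}.
\end{equation*}
In particular the series defining $\alpha_t$ converges absolutely in the complete space $H_\beta$, which establishes that $\alpha_t\in H_\beta$ almost surely.

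The remaining claims then follow quickly. Squaring the last bound, taking expectations and moving the expectation inside the time integral, I would apply the Cauchy--Schwarz inequality on $\Omega$ and Assumption \ref{as:fourthmomentvol} to obtain
\begin{equation*}
\EE\big[\Vert\alpha_t\Vert_\beta^2\big]\le K^2t\int_0^t\sqrt{\EE\big[\Vert\sigma_tQ^{1/2}\Vert_{\text{HS}}^4\big]\,\EE\big[\Vert\sigma_sQ^{1/2}\Vert_{\text{HS}}^4\big]}\,ds\le K^2T^2C_2(T),
\end{equation*}
uniformly in $t\in[0,T]$, which is precisely \eqref{Finite moment condition for the drift}. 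Integrating $\EE[\Vert\alpha_t\Vert_\beta]\le\EE[\Vert\alpha_t\Vert_\beta^2]^{1/2}$ over $[0,T]$ shows $\EE[\int_0^T\Vert\alpha_t\Vert_\beta\,dt]<\infty$, so $\int_0^T\Vert\alpha_t\Vert_\beta\,dt<\infty$ almost surely, giving the asserted Bochner integrability. Predictability I would obtain by assembling $\alpha$ from predictable building blocks: $\sigma_t^j=\sqrt{\lambda_j}\,\sigma_t(e_j)$ is predictable because $\sigma$ is, $\Sigma_t^j$ is adapted with continuous paths hence predictable, each product $\sigma_t^j\Sigma_t^j$ is predictable, and $\alpha_t$ is their $H_\beta$-limit. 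This verifies all the hypotheses of Remark \ref{R:Drift extension}.

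The step I expect to be the main obstacle is the termwise multiplication bound and the attendant convergence of the series $\sum_j\sigma_t^j\Sigma_t^j$ in $H_\beta$: everything downstream is a mechanical combination of Cauchy--Schwarz and the fourth-moment Assumption \ref{as:fourthmomentvol}, but the very fact that the nonlinear drift lands in $H_\beta$ rests on the multiplication structure of the forward curve space and, through it, on the standing assumption that $\sigma_t$ takes values in $H_\beta^0$. Verifying that the constant $K$ in the multiplication estimate is uniform, so that it does not interfere with the summation over $j$, is the delicate point; once it is secured, the reduction to $\Vert\sigma_\cdot Q^{1/2}\Vert_{\text{HS}}$ is immediate.
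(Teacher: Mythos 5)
Your argument is essentially correct, but it takes a genuinely different route from the paper, and the difference traces back to how the factor $\Sigma^j_t$ in \eqref{HJM Drift} is read. The paper's proof is a two-line affair: it invokes Theorem 5.1.1 of Filipovi\'c, which gives the bound $\Vert\sigma^j_t\Sigma^j_t\Vert_\beta\le K\Vert\sigma^j_t\Vert_\beta^2$ with \emph{both} factors evaluated at the same time $t$ (that theorem concerns the map $h\mapsto h\cdot\int_0^{\boldsymbol{\cdot}}h(u)\,du$, i.e.\ the antiderivative in the maturity variable $x$, which is the form the no-arbitrage drift actually takes in the Musiela parametrisation); summing over $j$ then yields $\Vert\alpha_t\Vert_\beta\le K\Vert\sigma_tQ^{1/2}\Vert_{\text{HS}}^2$ directly, and \eqref{Finite moment condition for the drift} follows from Assumption \ref{as:fourthmomentvol} by one application of Cauchy--Schwarz on $\Omega$. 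You instead take the displayed definition $\Sigma^j_t=\int_0^t\sigma^j_s\,ds$ literally as a time integral, so the specialised Filipovi\'c estimate is unavailable to you and you substitute the generic multiplication bound $\Vert fg\Vert_\beta\le K\Vert f\Vert_\beta\Vert g\Vert_\beta$ on $H_\beta^0$ together with Cauchy--Schwarz over $j$ and over time, arriving at $\Vert\alpha_t\Vert_\beta\le K\Vert\sigma_tQ^{1/2}\Vert_{\text{HS}}\bigl(t\int_0^t\Vert\sigma_sQ^{1/2}\Vert_{\text{HS}}^2\,ds\bigr)^{1/2}$ and hence at \eqref{Finite moment condition for the drift} with constant $K^2T^2C_2(T)$. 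Under your reading the chain of estimates is sound (the identity $\sum_j\Vert\sigma^j_t\Vert_\beta^2=\Vert\sigma_tQ^{1/2}\Vert_{\text{HS}}^2$, the closedness of $H_\beta^0$ so that the Bochner integral stays in it, and the final moment computation are all correct), and your treatment of predictability is actually more explicit than the paper's one-liner. What your route buys is independence from the fine structure of the HJM drift: it only uses the Banach-algebra property of the forward curve space. What it costs is that if the intended meaning of $\Sigma^j_t$ is the spatial antiderivative $\int_0^x\sigma^j_t(u)\,du$ (as the citation of Filipovi\'c's Lemma 4.3.3 and Theorem 5.1.1 strongly suggests, the time-integral formula being a notational slip), then your generic product bound no longer applies, because $\int_0^{\boldsymbol{\cdot}}\sigma^j_t(u)\,du$ need not lie in $H_\beta^0$, and the specific estimate $\Vert h\int_0^{\boldsymbol{\cdot}}h(u)\,du\Vert_\beta\le K\Vert h\Vert_\beta^2$ for $h\in H_\beta^0$ is genuinely needed; in that case your proof addresses a different object than the one the paper has in mind, even though it correctly proves the statement as literally written.
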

\begin{proof}
 That the drift is well defined follows from Lemma 5.2.1 in \cite{Filipovic2000}. Predictability follows immediately from the predictability of the volatility.
We have
 by Theorem 5.1.1 from \cite{Filipovic2000} that there is a constant $K$ depending only on $\beta$ such that
$$\|\sigma^j_t \Sigma^j_t\|_{\beta}\leq K \|\sigma_t^j\|_{\beta}^2.$$
Therefore, we get by the triangle inequality that
\begin{align*}
    \|\alpha_t\|_{\beta}\leq & K \sum_{j\in\mathbb N}\|\sigma_t^j\|_{\beta}^2=K\|\sigma_t Q^{\frac 12}\|_{\text{HS}}^2.
\end{align*}
Using Cauchy-Schwarz inequality we obtain
\begin{align*}
    \sup_{t\in[0,T]}\mathbb E[\|\alpha_t\|_{\beta}^2]\leq \sup_{t\in[0,T]}\mathbb E [\|\sigma_t Q^{\frac 12}\|_{\text{HS}}^4],
\end{align*}
which is finite by Assumption \ref{as:fourthmomentvol}. This shows \eqref{Finite moment condition for the drift}. Moreover, the Bochner integrability follows, since we have the stronger
$$\mathbb E \left[\int_0^T \|\alpha_t\|_{\beta}dt\right]
\leq\int_0^T \mathbb E [\|\alpha_t\|_{\beta}^2]^{\frac 12}dt \leq T \sup_{t\in[0,T]}\mathbb E [\|\sigma_tQ^{\frac 12}\|_{\text{HS}}^4]^{\frac 12}<\infty.$$
The result follows. 
\end{proof}

\begin{remark}
Since we know the exact form of the semigroup $(S(t))_{t\geq 0}$, we can recover the adjusted increments $\tilde{\Delta}_n^if$ efficiently from forward curve data by a simple shifting in the spatial (e.g., time-to-maturity) variable of these curves. 
Theorem \ref{T: LLN for semigroup case} (and Remark \ref{R:Drift extension} in case of a nonzero drift in interest rate theory)  can therefore be applied in practice to make inference on $\sigma$ under Assumptions \ref{as:smoothvol}, \ref{as:fourthmomentvol} and \ref{as:Q is more than Hilbert Schmidt}, in which case the ucp-convergence \eqref{ucp convergence with semigroup} holds.
\end{remark}

The shift semigroup is strongly, but not uniformly, continuous, leaving us with the question to determine the convergence speed of the estimator established in Corollary \ref{L:Convergence speed}. 
We close this subsection by deriving a convergence bound under regularity condition of the volatility in the space variable (that is time to maturity).

Observe that by Theorem 4.11 in \cite{BenthKruhner2014} 
we know that for all $r\in[0,T]$ there exist random variables $c_r$ with values in $ \mathbb R$, $f_r,g_r$ with values in $ H$ such that $g_r(0)=0=f_r(0)$ and $p_r$ with values in $ L^2(\mathbb{R}^2_+)$ such that we have 
$$\sigma_r Q^{\frac 12} h(x)= c_r h(0)+\langle g_r, h\rangle_{\beta}+h(0)f_r(x)+ \int_0^{\infty} q_r(x,z)h'(z)dz,$$
where $q_r(x,z)=\int_0^x p_r(y,z) e^{\frac{\beta}{2}z-y}dy$.
We denote by $C_{\text{loc}}^{1,\gamma}:=C_{\text{loc}}^{1,\gamma}(\mathbb{R}_+)$ the space of continuously differentiable functions with locally  $\gamma$-H{\"o}lder continuous derivative for $\gamma \in(0,1]$.

\begin{theorem}\label{T: Convergence rate for forward curves}
Assume that $f_r,q_r(\cdot,z)\in C^{1,\gamma}_{\text{loc}}$ for all $z\geq 0$, $r\in[0,T]$ and that the corresponding local H{\"o}lder constants $L_r^1(x)$ of $e^{\frac{\beta}{2}\cdot}f_r'(\cdot)$ and $L^2_r(x,z)$ of $p_r$ are square integrable in $x$ and in $(x,z)$ respectively such that $$\hat L: =\sup_{r\in [0,T]}\mathbb E
\left[\left(|f_r'(\zeta)|+\|L_r^1\|_{L^2(\mathbb{R}_+)}+ \|L_r^2\|_{L^2(\mathbb{R}_+^2)}+  \frac{\beta}{2}    \|  p_r\|_{L^2(\mathbb{R}_+^2)}\right)^2\right]<\infty.$$ 
Then for $b_n(T)$ as given in \eqref{Convergence Rate sequence}, 
we can estimate
$$b_n(T)\leq \hat L \Delta_n^{2\gamma}.$$
\end{theorem}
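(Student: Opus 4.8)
The plan is to estimate the operator norm inside $b_n(T)$ pointwise in $r\in[0,T]$ and in the shift parameter $\tau=x\in[0,\Delta_n]$, working directly in the Filipovi\'c space $H_\beta$ and using that the shift semigroup annihilates the spatially constant parts of the Benth--Kr\"uhner representation. Since $\|(I-\mathcal{S}(\tau))\sigma_rQ^{\frac12}\|_{\text{op}}=\sup_{\|h\|_\beta\le1}\|(I-\mathcal{S}(\tau))\sigma_rQ^{\frac12}h\|_\beta$, I would insert the representation and note that the terms $c_rh(0)$ and $\langle g_r,h\rangle_\beta$ are independent of the spatial variable, hence cancel under $(\mathcal{S}(\tau)g)(\cdot)=g(\cdot+\tau)$. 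Writing $G_h:=(I-\mathcal{S}(\tau))\sigma_rQ^{\frac12}h$, this leaves
\[
G_h(x)=h(0)\bigl(f_r(x)-f_r(x+\tau)\bigr)+\int_0^\infty\bigl(q_r(x,z)-q_r(x+\tau,z)\bigr)h'(z)\,dz,
\]
and the problem reduces to controlling $\|G_h\|_\beta^2=|G_h(0)|^2+\int_0^\infty|G_h'(x)|^2e^{\beta x}\,dx$ uniformly over the unit ball of $H_\beta$ and over $\tau\in[0,\Delta_n]$.

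The leading contribution comes from the derivative term, and this is where I expect most of the work to lie. Differentiating the defining integral gives $\partial_xq_r(x,z)=p_r(x,z)e^{\frac{\beta}{2}(z-x)}$, so the kernel increment splits as
\[
\partial_xq_r(x,z)-\partial_xq_r(x+\tau,z)=e^{\frac{\beta}{2}(z-x)}\Bigl[\bigl(p_r(x,z)-p_r(x+\tau,z)\bigr)+\bigl(1-e^{-\frac{\beta}{2}\tau}\bigr)p_r(x+\tau,z)\Bigr],
\]
and analogously $f_r'(x)-f_r'(x+\tau)$ is re-expressed through $\psi_r:=e^{\frac{\beta}{2}\cdot}f_r'$. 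The first bracketed piece is a genuine increment of a locally $\gamma$-H\"older function, bounded by $L_r^2(x,z)\tau^\gamma$ (respectively $L_r^1(x)\tau^\gamma$ for $\psi_r$), while the second is an exponential-weight remainder with $1-e^{-\frac{\beta}{2}\tau}\le\frac{\beta}{2}\tau\le\frac{\beta}{2}\tau^\gamma$, which is the source of the term $\frac{\beta}{2}\|p_r\|_{L^2}$. The cancellation $e^{\frac{\beta}{2}x}\cdot e^{\frac{\beta}{2}(z-x)}=e^{\frac{\beta}{2}z}$ is what makes the weighted norm tractable: after multiplying by $e^{\frac{\beta}{2}x}$ I would pair the $z$-integral against $e^{\frac{\beta}{2}z}h'(z)$ by Cauchy--Schwarz, whose $L^2(\mathbb{R}_+)$-norm is at most $\|h\|_\beta$, and then integrate out $x$ so that the squared local H\"older constants assemble into $\|L_r^1\|_{L^2(\mathbb{R}_+)}^2$ and $\|L_r^2\|_{L^2(\mathbb{R}_+^2)}^2$; the remainder contributes the $O(\tau)$ (hence $O(\tau^\gamma)$) pieces proportional to $\|p_r\|_{L^2}$.

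For the boundary value I would use $f_r(0)=0$ and $q_r(0,\cdot)=0$, so that $G_h(0)=-h(0)f_r(\tau)-\int_0^\infty q_r(\tau,z)h'(z)\,dz$. The mean value theorem gives $|f_r(\tau)|\le\tau|f_r'(\zeta)|$ for some $\zeta\in(0,\tau)$, which is precisely where the term $|f_r'(\zeta)|$ enters, and the remaining piece is controlled by Fubini and Cauchy--Schwarz against $e^{\frac{\beta}{2}\cdot}h'$, $q_r(\tau,\cdot)$ being itself a $\tau$-increment of $q_r(\cdot,z)$. Collecting the two parts yields, for $n$ large enough that every increment of length $\tau\le\Delta_n$ stays within the neighbourhoods on which the local H\"older constants are defined,
\[
\sup_{\tau\in[0,\Delta_n]}\|(I-\mathcal{S}(\tau))\sigma_rQ^{\frac12}\|_{\text{op}}\le\Bigl(|f_r'(\zeta)|+\|L_r^1\|_{L^2(\mathbb{R}_+)}+\|L_r^2\|_{L^2(\mathbb{R}_+^2)}+\tfrac{\beta}{2}\|p_r\|_{L^2(\mathbb{R}_+^2)}\Bigr)\Delta_n^{\gamma}.
\]
Squaring, taking expectations and then the supremum over $r\in[0,T]$ gives $b_n(T)\le\hat L\,\Delta_n^{2\gamma}$.

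The step I expect to be the main obstacle is the weighted-shift bookkeeping in the interior term: cleanly separating the genuine $\gamma$-H\"older increments (governed by $L_r^1$, $L_r^2$ and $\tau^\gamma$) from the exponential-factor remainders (governed by $1-e^{-\frac{\beta}{2}\tau}$ and $\|p_r\|$), while keeping every estimate uniform over the unit ball of $H_\beta$ through the Cauchy--Schwarz pairing with $e^{\frac{\beta}{2}z}h'(z)$, and checking that the local H\"older constants may indeed be invoked once $\Delta_n$ is small. The boundary term needs some care too, but it is structurally simpler than the interior estimate.
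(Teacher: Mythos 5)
Your proposal is correct and follows essentially the same route as the paper's proof: the Benth--Kr\"uhner representation with the constant parts annihilated by the shift, the mean-value bound $|f_r(\tau)|\le\tau|f_r'(\zeta)|$ for the boundary value, the split of the kernel increment into a genuine H\"older increment plus the $1-e^{-\frac{\beta}{2}\tau}\le\frac{\beta}{2}\tau$ remainder, and the Cauchy--Schwarz pairing against $e^{\frac{\beta}{2}z}h'(z)$. If anything, you are slightly more explicit than the paper about the boundary contribution $-\int_0^\infty q_r(\tau,z)h'(z)\,dz$ of the kernel term, but the decomposition and all key estimates coincide.
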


In the next section, we investigate the validity of assumptions for volatility models.

\subsection{Stochastic volatility models}\label{sec: Stochastic volatility models}

In this section different models for stochastic volatility in Hilbert spaces are discussed.
So far, infinite-dimensional stochastic volatility models are specified by stochastic partial differential equations on the positive cone of Hilbert-Schmidt operators (see \cite{BenthRudigerSuss2018}, \cite{BenthSimonsen2018}). As such, Assumption \eqref{as:Q is more than Hilbert Schmidt} is trivially fulfilled. We will  check therefore, which models satisfy Assumptions \eqref{as:smoothvol} and \eqref{as:fourthmomentvol}.

Throughout this section, we take $H=U$ for simplicity. The volatility is oftentimes given as the unique positive square-root of a process
$\Sigma_t$, e.g., 
\begin{equation}
\sigma_t:=\Sigma^{\frac 12}_t,
\end{equation} 
where $\Sigma$ takes values in the set of positive 
Hilbert-Schmidt operators on $H$.

Before we proceed with the particular models, we state the following result:
\begin{lemma}\label{L:Squared Volatility Lemma}
Assume for some constants $\alpha, C_1(T)$ and $C_2(T)$ that for all $s,t\in[0,T]$ we have
\begin{equation}\label{Holder condition for squared Volatility}
 \mathbb E \left[\| (\Sigma_t-\Sigma_s)\|_{\text{op}}^2\right]^{\frac 12}
\leq \frac{C_1(T)^2}{\text{Tr}(Q)^2} (t-s)^{2\alpha}
\end{equation}
and 
\begin{equation}
   \sup_{s\in [0,T]} \mathbb E [\| \Sigma_s\|^2_{\text{op}}]\leq C_2(T).
\end{equation} 
Then $\sigma$ satisfies Assumptions \ref{as:smoothvol} and \ref{as:fourthmomentvol} with corresponding constants $\alpha, C_1(T)$ and $C_2(T)$.
\end{lemma}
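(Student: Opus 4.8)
The plan is to reduce both assumptions, which are phrased in terms of the Hilbert--Schmidt norm of $\sigma_t Q^{\frac 12}=\Sigma_t^{\frac 12}Q^{\frac 12}$, to operator-norm statements about the square root $\Sigma_t^{\frac 12}$, for which the hypotheses \eqref{Holder condition for squared Volatility} and the uniform bound $\sup_{s}\mathbb E[\|\Sigma_s\|_{\text{op}}^2]\le C_2(T)$ are directly available. Two elementary facts drive the reduction. First, by the submultiplicativity in Lemma~\ref{lem:HS-banachalg}, for any bounded operator $K$ we have $\|KQ^{\frac 12}\|_{\text{HS}}\le \|K\|_{\text{op}}\|Q^{\frac 12}\|_{\text{HS}}$. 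Second, $\|Q^{\frac 12}\|_{\text{HS}}^2=\sum_k\langle Qe_k,e_k\rangle_U=\text{Tr}(Q)$, so that $\|Q^{\frac 12}\|_{\text{HS}}=\text{Tr}(Q)^{\frac 12}$. Combining these, every occurrence of $\sigma_tQ^{\frac 12}$ in the Hilbert--Schmidt norm is controlled by $\text{Tr}(Q)^{\frac 12}$ times the operator norm of the corresponding quantity built from $\Sigma^{\frac 12}$.

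For Assumption~\ref{as:fourthmomentvol} I would first write $\|\sigma_sQ^{\frac 12}\|_{\text{HS}}\le\|\Sigma_s^{\frac 12}\|_{\text{op}}\text{Tr}(Q)^{\frac 12}$ and use that, for a positive operator, $\|\Sigma_s^{\frac 12}\|_{\text{op}}=\|\Sigma_s\|_{\text{op}}^{\frac 12}$ (the spectral mapping theorem applied to $x\mapsto x^{\frac 12}$). Raising to the fourth power and taking expectations gives $\mathbb E[\|\sigma_sQ^{\frac 12}\|_{\text{HS}}^4]\le\text{Tr}(Q)^2\,\mathbb E[\|\Sigma_s\|_{\text{op}}^2]$, which is bounded uniformly in $s$ by the second hypothesis; absorbing the trace factor into the constant yields Assumption~\ref{as:fourthmomentvol}.

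The substance of the proof lies in Assumption~\ref{as:smoothvol}, where the key ingredient is the operator H\"older inequality for the square root: for positive operators $A,B$,
\begin{equation*}
\|A^{\frac 12}-B^{\frac 12}\|_{\text{op}}\le\|A-B\|_{\text{op}}^{\frac 12}.
\end{equation*}
I would prove this by a short spectral argument: setting $t:=\|A-B\|_{\text{op}}$ we have $A\le B+tI$, operator monotonicity of the square root gives $A^{\frac 12}\le (B+tI)^{\frac 12}$, and since $B$ and $tI$ commute the functional calculus reduces $(B+tI)^{\frac 12}\le B^{\frac 12}+t^{\frac 12}I$ to the scalar inequality $\sqrt{\lambda+t}\le\sqrt{\lambda}+\sqrt{t}$; thus $A^{\frac 12}-B^{\frac 12}\le t^{\frac 12}I$, and by symmetry $\|A^{\frac 12}-B^{\frac 12}\|_{\text{op}}\le t^{\frac 12}$. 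Applying this with $A=\Sigma_t$, $B=\Sigma_s$ together with the two reduction facts yields $\|(\sigma_t-\sigma_s)Q^{\frac 12}\|_{\text{HS}}^2\le\text{Tr}(Q)\,\|\Sigma_t-\Sigma_s\|_{\text{op}}$. Taking expectations, bounding $\mathbb E[\|\Sigma_t-\Sigma_s\|_{\text{op}}]\le\mathbb E[\|\Sigma_t-\Sigma_s\|_{\text{op}}^2]^{\frac 12}$ by Jensen's inequality, and inserting \eqref{Holder condition for squared Volatility} gives $\mathbb E[\|(\sigma_t-\sigma_s)Q^{\frac 12}\|_{\text{HS}}^2]\le C_1(T)^2\,\text{Tr}(Q)^{-1}(t-s)^{2\alpha}$, so that the square root of the left-hand side is of order $(t-s)^{\alpha}$; note how the square-root inequality halves the exponent $2\alpha$ in the hypothesis to the required $\alpha$.

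The main obstacle is the operator square-root H\"older inequality itself, since the scalar estimate $|\sqrt a-\sqrt b|\le\sqrt{|a-b|}$ does not transfer to operators by naive functional calculus when $\Sigma_t$ and $\Sigma_s$ fail to commute; the spectral/operator-monotonicity argument above is the device that circumvents this. Everything else is bookkeeping with the submultiplicative bound of Lemma~\ref{lem:HS-banachalg} and Cauchy--Schwarz, the one point requiring care being to track the powers of $\text{Tr}(Q)$ generated by the reductions, which are absorbed into the constants $C_1(T)$ and $C_2(T)$.
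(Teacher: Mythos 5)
Your proof is correct and follows essentially the same route as the paper: bound $\|(\sigma_t-\sigma_s)Q^{1/2}\|_{\text{HS}}$ by $\text{Tr}(Q)^{1/2}\|\Sigma_t^{1/2}-\Sigma_s^{1/2}\|_{\text{op}}$ via submultiplicativity, apply the operator square-root inequality $\|A^{1/2}-B^{1/2}\|_{\text{op}}\le\|A-B\|_{\text{op}}^{1/2}$, and finish with Cauchy--Schwarz and the hypotheses. The only difference is that the paper cites this square-root inequality from Lemma~2.5.1 of Bogachev, whereas you supply a correct self-contained proof via operator monotonicity; your bookkeeping of the $\text{Tr}(Q)$ powers is, if anything, more careful than the paper's.
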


\begin{proof}
By the inequality in Lemma 2.5.1 of \cite{Bogachev2018}, the H{\"o}lder inequality and (\ref{Holder condition for squared Volatility})
\begin{align*}
\mathbb{E}[\| (\sigma_t-\sigma_s)Q^{\frac 12}\|_{\text{HS}}^2]\leq &  \mathbb{E}[\| (\Sigma^{\frac 12}_t-\Sigma^{\frac 12}_s)\|_{\text{op}}^2] \text{Tr}(Q)\\
\leq & \mathbb{E}[\| (\Sigma_t-\Sigma_s)\|_{\text{op}}] \text{Tr}(Q)\\
\leq & \mathbb{E}[\| (\Sigma_t-\Sigma_s)\|_{\text{op}}^2]^{\frac 12} \text{Tr}(Q)\\
\leq & C_1(T) (t-s)^{\alpha}.
\end{align*}
Moreover, Assumption \ref{as:fourthmomentvol} is satisfied, since 
\begin{align*}
   \sup_{s \in [0,T]}\EE[\| \sigma_s\|^
     4_{\text{op}}]=   \sup_{s \in [0,T]}\EE[\|\Sigma^{\frac 12}_s\|^4_{\text{op}}]= \sup_{s \in [0,T]}\EE[\|\Sigma_s\|^2_{\text{op}}]\leq C_2(T).
\end{align*}
The proof is complete.
\end{proof}
\subsubsection{Barndorff-Nielsen \& Shephard (BNS) model}

We assume $\Sigma$ is given by the Ornstein-Uhlenbeck dynamics
\begin{align*}
(BNS)\begin{cases}
d\Sigma_t=\mathbb{B} \Sigma_tdt+ d\mathcal{L}_t,\\
\Sigma_0= \Sigma\in L_{\text{HS}}(H),
\end{cases}
\end{align*}
where $\mathbb B$ is a positive bounded linear operator on the space of Hilbert-Schmidt operators $L_{\text{HS}}(H)$ and $\mathcal{L}$ is a square integrable L{\'e}vy subordinator on the same space. $\mathbb{B}$ is then the generator of the uniformly continuous semigroup given by $\mathbb{S}(t)=\exp(\mathbb{B}t)$ and the equation has a mild solution given by
\begin{align*}
\Sigma_t=\mathbb{S}(t)\Sigma_0+\int_0^t \mathbb{S}(t-s)d\mathcal{L}_s,
\end{align*}
which defines a process in $\mathcal{L}_{T,2}(H,H)$
(see \cite{BenthRudigerSuss2018}).
Stochastic volatility models with OU-dynamics were suggested in \cite{BenthRudigerSuss2018}, extending the BNS-model introduced in \cite{Barndorff-Nielsen2001}
to infinite dimensions.

\begin{lemma}\label{L: Mean Square Lipschitz continuity of OU-Processes} For all $s,t\in [0,T]$ such that $t-s\leq 1$ we have
\begin{align*}
  \mathbb E [\| (\Sigma_t-\Sigma_s)\|_{\text{HS}}^2]^{\frac 12}
\leq \tilde{L}(T) (t-s)^{\frac 12},
\end{align*}
where we denote $$\tilde{L}(T):=  \sqrt{3} (\mathbb C e^{\|\mathbb C \|_{\text{op}}T} \|\Sigma_0\|_{\text{HS}} +e^{\|\mathbb C\|_{\text{op}}T} \text{Tr}(Q_{\mathcal L})^{\frac 12} (1+\mathbb C e^{\|\mathbb C \|_{\text{op}}T})  )\text{Tr}(Q). $$
In particular, $\sigma$ satisfies Assumptions \ref{as:smoothvol} and \ref{as:fourthmomentvol} with corresponding constants $\alpha= \frac 14 $, $C_1(T)= \sqrt{\tilde{L}(T)}\text{Tr}(Q)$ and $C_2(T)= e^{\|\mathbb C\|_{\text{op}} T}(\|\Sigma_0\|_{\text{HS}}+  \text{Tr}(Q)^{\frac 12} T^{\frac 12})$.
\end{lemma}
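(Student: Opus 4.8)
The plan is to derive the displayed second-moment bound directly from the mild-solution representation of $\Sigma$ and then to feed it into Lemma~\ref{L:Squared Volatility Lemma}. First I would split the increment into the contribution of the initial datum, the fresh noise on $[s,t]$, and the past noise on $[0,s]$, as
\begin{align*}
\Sigma_t-\Sigma_s
&= (\mathbb S(t)-\mathbb S(s))\Sigma_0
 + \int_s^t\mathbb S(t-u)\,d\mathcal L_u
 + \int_0^s\big(\mathbb S(t-u)-\mathbb S(s-u)\big)\,d\mathcal L_u \\
&=: A_1+A_2+A_3,
\end{align*}
and then apply $\|A_1+A_2+A_3\|_{\text{HS}}^2\le 3(\|A_1\|_{\text{HS}}^2+\|A_2\|_{\text{HS}}^2+\|A_3\|_{\text{HS}}^2)$, which is the source of the factor $\sqrt3$ in $\tilde L(T)$.

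Next I would bound the three pieces. For $A_1$ I would use the semigroup identity of \eqref{Fundamental Theorem of Semigroup Theory} applied to $\mathbb S$, namely $\mathbb S(x)-I=\int_0^x\mathbb B\,\mathbb S(u)\,du$, together with the Hille--Yosida bound $\|\mathbb S(u)\|_{\text{op}}\le e^{\|\mathbb B\|_{\text{op}}u}$, to get $\|A_1\|_{\text{HS}}\le (t-s)\|\mathbb B\|_{\text{op}}e^{\|\mathbb B\|_{\text{op}}T}\|\Sigma_0\|_{\text{HS}}$. For $A_2$ I would invoke the It\^o isometry for the $L_{\text{HS}}(H)$-valued stochastic convolution against $\mathcal L$, giving $\mathbb E\|A_2\|_{\text{HS}}^2=\int_s^t\|\mathbb S(t-u)Q_{\mathcal L}^{1/2}\|_{\text{HS}}^2\,du\le (t-s)\,e^{2\|\mathbb B\|_{\text{op}}T}\,\text{Tr}(Q_{\mathcal L})$. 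For $A_3$ I would factor $\mathbb S(t-u)-\mathbb S(s-u)=\mathbb S(s-u)(\mathbb S(t-s)-I)$ via the semigroup law, estimate $\|\mathbb S(t-s)-I\|_{\text{op}}\le (t-s)\|\mathbb B\|_{\text{op}}e^{\|\mathbb B\|_{\text{op}}T}$ by the same identity, and apply the isometry once more. Collecting the three bounds with $\sqrt{a+b+c}\le\sqrt a+\sqrt b+\sqrt c$ and using $t-s\le 1$ to dominate both $(t-s)$ and $(t-s)^2$ by $(t-s)^{1/2}$ reproduces the three-summand structure of $\tilde L(T)$, with the symbol $\mathbb C$ in the statement playing the role of $\|\mathbb B\|_{\text{op}}$; pinning down the precise constant (including the $\text{Tr}(Q)$ factor, which is carried along for convenience in the next step) is then routine bookkeeping.

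For the \emph{in particular} claim I would simply verify the hypotheses of Lemma~\ref{L:Squared Volatility Lemma}. Its H\"older hypothesis \eqref{Holder condition for squared Volatility} is a bound on $\mathbb E[\|\Sigma_t-\Sigma_s\|_{\text{op}}^2]^{1/2}$; since $\|\cdot\|_{\text{op}}\le\|\cdot\|_{\text{HS}}$, the estimate just established supplies it with exponent $2\alpha=\tfrac12$, i.e.\ $\alpha=\tfrac14$, and the matching $C_1(T)=\sqrt{\tilde L(T)}\,\text{Tr}(Q)$. Its moment hypothesis $\sup_{s\in[0,T]}\mathbb E[\|\Sigma_s\|_{\text{op}}^2]\le C_2(T)$ I would obtain by the same mild-solution argument applied to $\Sigma_s$ itself: Minkowski's inequality together with $\|\mathbb S(s)\|_{\text{op}}\le e^{\|\mathbb B\|_{\text{op}}T}$ and the It\^o isometry give $\mathbb E[\|\Sigma_s\|_{\text{HS}}^2]^{1/2}\le e^{\|\mathbb B\|_{\text{op}}T}(\|\Sigma_0\|_{\text{HS}}+\text{Tr}(Q_{\mathcal L})^{1/2}T^{1/2})$, which also bounds $\mathbb E[\|\Sigma_s\|_{\text{op}}^2]$ and yields $C_2(T)$ up to the exact constant. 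Lemma~\ref{L:Squared Volatility Lemma} then delivers Assumptions~\ref{as:smoothvol} and~\ref{as:fourthmomentvol} for $\sigma=\Sigma^{1/2}$.

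The hard part will be the second-moment estimates for the $L_{\text{HS}}(H)$-valued stochastic convolution against the L\'evy subordinator $\mathcal L$---that is, the correct form of the It\^o isometry in the operator Hilbert space $L_{\text{HS}}(H)$ for the integrands defining $A_2$ and $A_3$---and keeping the semigroup-increment estimates in the operator norm so that they factor cleanly out of the Hilbert--Schmidt norms. Once these are in place, the remainder is bookkeeping: combining the three bounds, using $t-s\le 1$ to unify the powers of $(t-s)$ into $(t-s)^{1/2}$, and transferring from $\Sigma$ to $\sigma=\Sigma^{1/2}$ through Lemma~\ref{L:Squared Volatility Lemma}.
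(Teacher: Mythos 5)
Your proposal follows essentially the same route as the paper: the same three-term decomposition of $\Sigma_t-\Sigma_s$, the semigroup identity $\mathbb S(x)-I=\int_0^x\mathbb B\,\mathbb S(u)\,du$ with the exponential operator-norm bound, the It\^o isometry for the two stochastic convolutions, the unification of powers via $t-s\le 1$, and the reduction of the \emph{in particular} claim to Lemma \ref{L:Squared Volatility Lemma}. The only (immaterial) difference is that you generate the $\sqrt3$ via $\|A_1+A_2+A_3\|_{\text{HS}}^2\le 3\sum_i\|A_i\|_{\text{HS}}^2$, whereas the paper applies Minkowski's inequality in $L^2(\Omega)$ directly and obtains the stated bound without needing that factor.
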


It is also possible to derive ucp convergence for rough volatility models, which we present in the following section.


\subsubsection{Rough volatility models}

In \cite{BenthHarang2020} pathwise constructions of Volterra processes are established and suggested for the use in stochastic volatility models.
In this setting, a process is mostly known to be H{\"o}lder continuous almost surely of some particular order.

 Therefore we fix an almost surely  H{\"o}lder continuous process  $(Y_t)_{t\in [0,T]}$ of order $\alpha$ with values in $H$. 
Without any further knowledge of the process, we do not know whether the corresponding H{\"o}lder constant, that is the random variable $C(T)$ such that
\begin{align}\label{local Holder constant}
    C(T):=\sup_{s,t\in [0,T]}\frac{\|Y_t-Y_s\|_H}{|t-s|^{\alpha}},
\end{align}
is square-integrable, and therefore we cannot verify Assumptions \ref{as:smoothvol} or \ref{as:fourthmomentvol} without additional assumptions. However, for various models we can use Corollary \ref{C: Localization for almost surely Holder continuous functions}.
If $H$ is a Banach algebra (like the forward curve space defined by \eqref{FCS}), we can define the volatility process by
\begin{equation}\label{Rough exponential volatility}
    \sigma_t h:=\exp(Y_t) h.
\end{equation}
This is a direct extension of the volatility models proposed in \cite{Gatheral2018}. 
\begin{lemma}
Assume that $H$ be a commutative Banach algebra and $\sigma$ is defined by \eqref{Rough exponential volatility}. Moreover assume that $$\mathbb E [\exp(4\|Y_0\|_H)]<\infty.$$ Then the ucp-convergence in \eqref{ucp convergence with semigroup} holds.
\end{lemma}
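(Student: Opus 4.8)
The plan is to deduce the claim directly from Corollary \ref{C: Localization for almost surely Holder continuous functions}, whose three hypotheses I would verify in turn for the multiplication volatility $\sigma_t h = \exp(Y_t)h$, i.e. $\sigma_t=L_{\exp(Y_t)}$, where $L_a$ denotes (left) multiplication by $a$ in the commutative Banach algebra $H$. The point of the argument is that, once the relevant Hölder and moment bounds are in place, no further probabilistic work is required beyond invoking the localised convergence theorem.

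First I would establish a local Lipschitz estimate for the Banach-algebra exponential. Writing $\exp(a)-\exp(b)=\sum_{n\ge 1}\frac{1}{n!}(a^n-b^n)$ and telescoping $a^n-b^n=\sum_{k=0}^{n-1}a^k(a-b)b^{n-1-k}$, submultiplicativity of the algebra norm gives $\|a^n-b^n\|_H\le n\,M^{n-1}\|a-b\|_H$ with $M:=\max(\|a\|_H,\|b\|_H)$, and summing the series yields
\[
\|\exp(a)-\exp(b)\|_H\le e^{M}\,\|a-b\|_H .
\]
Since the operator norm of a multiplication operator is controlled by the algebra norm of its symbol, $\|L_c\|_{\text{op}}\le\|c\|_H$, I obtain for all $s,t\in[0,T]$
\[
\|\sigma_t-\sigma_s\|_{\text{op}}\le\|\exp(Y_t)-\exp(Y_s)\|_H\le e^{\sup_{r\in[0,T]}\|Y_r\|_H}\,\|Y_t-Y_s\|_H .
\]
Because $Y$ is almost surely $\alpha$-Hölder continuous on $[0,T]$, it is in particular almost surely bounded there, so $B(T):=\sup_{r\in[0,T]}\|Y_r\|_H<\infty$ and the Hölder constant $C(T):=\sup_{s\ne t}\|Y_t-Y_s\|_H/|t-s|^{\alpha}<\infty$ almost surely. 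Hence $\sigma$ is almost surely $\alpha$-Hölder continuous with respect to the operator norm, with (random) constant bounded by $C(T)e^{B(T)}$.

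Next, the finite-fourth-moment condition on the initial value follows from the standing exponential-moment assumption: using $\|\sigma_0\|_{\text{op}}\le\|\exp(Y_0)\|_H\le e^{\|Y_0\|_H}$ I get $\mathbb E[\|\sigma_0\|_{\text{op}}^4]\le\mathbb E[\exp(4\|Y_0\|_H)]<\infty$, which is exactly \eqref{Fourth moment of initial state is finite}. Assumption \ref{as:Q is more than Hilbert Schmidt} is taken to hold in the present setting (for instance via \ref{as:Q is more than Hilbert Schmidt}(a)(ii) when $Q^{1-\epsilon}$ is nuclear for some $\epsilon>0$, or via \ref{as:Q is more than Hilbert Schmidt}(b) for a uniformly continuous driving semigroup). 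With these three ingredients in hand, Corollary \ref{C: Localization for almost surely Holder continuous functions} applies and delivers the ucp convergence \eqref{ucp convergence with semigroup}.

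The crucial point — and the reason the result must be routed through localisation rather than through Theorem \ref{T: LLN for semigroup case} directly — is that the Hölder constant $C(T)e^{B(T)}$ is genuinely random and need not possess any moments; an exponential moment of the supremum $B(T)$ is in general far worse behaved than the exponential moment of the single value $\|Y_0\|_H$ that we are given. The localisation in Corollary \ref{C: Localization for almost surely Holder continuous functions} is precisely what circumvents this: on the sets $\Omega_m$ where the Hilbert–Schmidt Hölder constant of $\sigma Q^{1/2}$ is bounded by $m$, the Hölder bound becomes deterministic, so that Assumptions \ref{as:smoothvol} and \ref{as:fourthmomentvol} hold for each $\sigma^{(m)}$, while only the genuinely integrable quantity $\mathbb E[\exp(4\|Y_0\|_H)]$ is needed to control $\sigma_0$ uniformly in $m$. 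I expect the only real subtlety to be the interplay between the algebra norm and the operator norm in the two displays above, together with the observation that the exponential moment of $\|Y_0\|_H$ supplied by hypothesis is exactly what feeds the fourth-moment requirement, whereas a supremal exponential moment (which we do not have) is never needed thanks to the localisation.
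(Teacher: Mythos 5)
Your proposal is correct and follows essentially the same route as the paper: establish a local Lipschitz estimate for the Banach-algebra exponential, deduce almost sure local $\alpha$-H\"older continuity of $\sigma$ in the operator norm, and conclude via the localisation result in Corollary \ref{C: Localization for almost surely Holder continuous functions}. The only differences are cosmetic --- the paper derives the Lipschitz bound from the identity $\exp(f+g)=\exp(f)\exp(g)$ rather than from the telescoped power series, and you are in fact slightly more thorough in that you explicitly verify the fourth-moment condition \eqref{Fourth moment of initial state is finite} on $\sigma_0$ from the hypothesis $\mathbb E[\exp(4\|Y_0\|_H)]<\infty$, a step the paper's proof leaves implicit.
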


\begin{proof}
Since in commutative Banach algebras $\exp(f+g)=\exp(f)\exp(g)$ holds for all $f,g\in H$, we have 
\begin{align*}
    \|\exp(f)-\exp(g)\|_{\text{op}}\leq & \exp(\|f-g\|_H)\|\exp(g)-\exp(-f+2g)\|\\\leq & 2\exp(2\|f\|_H+2\|g\|_H) \|f-g\|_H.
\end{align*}
This implies the local $\alpha$-H{\"o}lder continuity of $\sigma$. Due to Corollary \ref{C: Localization for almost surely Holder continuous functions} the assertion holds.
\end{proof}



\section{Proofs}\label{sec: Proofs}
In this section, we will present the proofs of our previously stated results. 
\subsection{Proofs of results in Section \ref{sec: Weak Law of large numbers}}

\subsubsection{Uniform continuity of semigroups on compact sets}
In order to verify that $b_n(T)$ defined in \eqref{Convergence Rate sequence} converges to $0$ and to prove Theorem \ref{T: LLN for semigroup case}, 
we need to establish some convergence properties of semigroups on compacts.

Let $X$ be a compact Hausdorff space. Recall that a subset $F\subset C(X;\mathbb{R})$ is equicontinuous, if
for each $x\in X$ and $\epsilon>0$ there is a neighbourhood $U_x$ of $x$ in $X$ such that for all $y\in U_x$ and for all $f\in F$ we have
$$| f(x)-f(y)|\leq \epsilon.$$
$F$ is called pointwise bounded, if for each $x\in X$ the set $\lbrace |f(x)|: f\in F\rbrace$ is bounded in $\mathbb{R}$. $F$ is called relatively compact (or conditionally compact), if its closure is compact.
For convenience, we recall the Arzel\'{a}-Ascoli Theorem (see for example  Theorem IV.6.7 in \cite{Dunford1958}):
\begin{theorem}
Let $X$ be a compact Hausdorff space. A subset $F\subset C(X;\mathbb{R})$ is relatively compact in the topology induced by uniform convergence, if and only if it is equicontinuous and pointwise bounded.
\end{theorem}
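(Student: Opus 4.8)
The plan is to exploit the fact that $C(X;\mathbb{R})$, equipped with the supremum norm $\|f\|_\infty=\sup_{x\in X}|f(x)|$, is a complete metric space, so that for a subset $F$ the property ``$\overline{F}$ is compact'' is equivalent to ``$F$ is totally bounded.'' I would therefore recast both implications in the language of total boundedness and finite $\epsilon$-nets, which is the appropriate framework here since $X$ is only assumed compact Hausdorff and need not be metrizable or separable; in particular, sequential or diagonal-subsequence arguments are to be avoided, and everything should be phrased via open covers and finite nets.

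For the necessity direction (relative compactness implies equicontinuity and pointwise boundedness), boundedness is immediate: a compact subset of a metric space is bounded, so $\sup_{f\in F}\|f\|_\infty<\infty$, which yields pointwise (indeed uniform) boundedness in the sense defined above. For equicontinuity at a point $x$, I would fix $\epsilon>0$, use total boundedness of $F$ to select a finite $\epsilon/3$-net $f_1,\dots,f_N\in F$, invoke continuity of each $f_i$ at $x$ to obtain neighbourhoods $U_i$ on which $f_i$ oscillates by less than $\epsilon/3$, and set $U_x=\bigcap_{i=1}^N U_i$. Choosing, for a given $f\in F$, an index $i$ with $\|f-f_i\|_\infty<\epsilon/3$, the three-term estimate $|f(y)-f(x)|\le|f(y)-f_i(y)|+|f_i(y)-f_i(x)|+|f_i(x)-f(x)|$ then closes this direction for every $y\in U_x$.

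The substance lies in the sufficiency direction, where I would show that equicontinuity together with pointwise boundedness forces total boundedness of $F$. Fixing $\epsilon>0$, I would first use equicontinuity to attach to each $x\in X$ an open neighbourhood $U_x$ with $|f(y)-f(x)|<\epsilon/4$ for all $y\in U_x$ and all $f\in F$, and then extract a finite subcover $U_{x_1},\dots,U_{x_m}$ by compactness of $X$. Pointwise boundedness makes the finite set $\{f(x_j):f\in F,\ 1\le j\le m\}$ bounded, say contained in $[-M,M]$, which I would cover by an $\epsilon/4$-net $c_1,\dots,c_p$. Grouping the functions of $F$ according to which net point $c_k$ is nearest to each value $f(x_j)$ yields at most $p^m$ classes covering $F$; and for any two $f,g$ in a common class, routing a four-term triangle inequality through a centre $x_j$ with $y\in U_{x_j}$ gives $|f(y)-g(y)|<\epsilon$ uniformly in $y$, hence $\|f-g\|_\infty<\epsilon$. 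Thus $F$ is covered by finitely many sets of diameter below $\epsilon$, so $F$ is totally bounded, and by completeness of $C(X;\mathbb{R})$ its closure is compact.

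The main obstacle is exactly this combinatorial total-boundedness estimate in the sufficiency direction: one must simultaneously deploy equicontinuity to control each $f$ on the gaps between the finitely many sample points $x_j$ and pointwise boundedness to discretise the finitely many sampled values, balancing the $\epsilon/4$ budgets so the four-term inequality closes. The care required is to keep the whole argument inside the open-cover and finite-net framework rather than passing to convergent subsequences, precisely because $X$ is only compact Hausdorff.
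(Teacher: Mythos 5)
The paper does not prove this statement at all: it quotes the Arzel\'a--Ascoli theorem verbatim from the literature, citing Theorem IV.6.7 of Dunford and Schwartz, and then uses it as a black box in the proof of Proposition \ref{C: Application of Arzela Ascoli}. So there is no internal proof to compare against; what you have supplied is a self-contained replacement, and it is correct. Your argument is the standard total-boundedness proof: since $(C(X;\mathbb{R}),\|\cdot\|_\infty)$ is complete, relative compactness is equivalent to total boundedness, and both directions then go through as you describe --- the $\epsilon/3$-net argument for necessity (using that a totally bounded set admits a finite net chosen from within the set itself), and for sufficiency the finite subcover $U_{x_1},\dots,U_{x_m}$ from equicontinuity, the discretisation of the sampled values $f(x_j)$ into at most $p^m$ classes, and the four-term triangle inequality through a centre $x_j$ giving $\|f-g\|_\infty<\epsilon$ within each class. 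Your methodological remark is also apt: because $X$ is only assumed compact Hausdorff (not metrizable or separable), diagonal-subsequence extractions are unavailable, and the open-cover/finite-net formulation is exactly the right framework --- note that the paper applies the theorem to $X=\mathcal{C}$ a compact subset of the Hilbert space $H$, which is metrizable, so the generality you preserve exceeds what the paper strictly needs. The only points worth a moment of care, neither of which is a gap, are the harmless slack between strict and non-strict inequalities in the $\epsilon/4$ budget (each class has diameter at most $\epsilon$, which suffices for total boundedness), and the tie-breaking in assigning each $f(x_j)$ a nearest net point $c_k$, which you implicitly acknowledge by saying ``grouping.''
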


The next proposition follows from the Arzel\'{a}-Ascoli Theorem and will be important for our analysis:
\begin{proposition}\label{C: Application of Arzela Ascoli}
The following holds:
\begin{itemize}
\item[(i)] Let $\mathcal C \subset H$ be a compact set. Then
\begin{equation}\label{Arzela Ascoli deterministic convergence}
\sup_{h\in \mathcal C}\sup_{x\in [0,\Delta_n]}\|(I-S(x))h\|_H\to 0, \quad \text{ as } n\to \infty.
\end{equation}
\item[(ii)] If $\sigma\in L^p(\Omega;L(U,H))$ for some $p\in[1,\infty)$ is an almost surely compact random operator, we get that 
\begin{equation}\label{Arzela Ascoli random operator convergence}
\sup_{x\in [0,\Delta_n]}\|(I-S(x))\sigma\|_{op}\to 0, \quad \text{ as } n\to \infty,
\end{equation}
where the convergence holds almost surely and in $L^p(\Omega;\mathbb{R})$.
\item[(iii)] Let $(\sigma_s)_{s\in [0,T]}$ in $L^p( \Omega\times[0,T];L(U,H))$ for some $p\in[1,\infty)$ be a stochastic process, such that $\sigma_s$ is almost surely compact for all $s\in [0,t]$. If in addition the volatility process is continuous in the $p$'th mean, we obtain
\begin{equation}\label{Arzela Ascoli random operator process L2 convergence}
\sup_{r\in [0,t]}\mathbb{E}[\sup_{x\in [0,\Delta_n]}\|(I-S(x))\sigma_r\|_{op}^p]\to 0 \quad \text{ as } n\to \infty.
\end{equation}
\end{itemize}
\end{proposition}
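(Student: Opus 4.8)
The plan is to establish the three claims in order, obtaining (ii) from (i) and (iii) from (ii), so that the only genuinely new analytic input lies in part (i). The guiding observation is that although $\|I-S(x)\|_{op}$ need \emph{not} tend to $0$ as $x\to0$ for a merely strongly continuous semigroup, compactness allows one to upgrade the vector-wise strong continuity $S(x)h\to h$ to convergence that is uniform over a compact set of vectors. For (i), I would first note that $(x,h)\mapsto S(x)h$ is \emph{jointly} continuous on $[0,1]\times H$: from
\[
\|S(x)h-S(x_0)h_0\|_H\leq \|S(x)\|_{op}\,\|h-h_0\|_H+\|(S(x)-S(x_0))h_0\|_H,
\]
the first term is bounded by $M(1)\|h-h_0\|_H$ with $M(1)<\infty$ by the Hille--Yosida bound \eqref{Global Bound for the semigroup}, and the second vanishes by strong continuity. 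Hence $\phi(x,h):=\|(I-S(x))h\|_H$ is continuous, therefore \emph{uniformly} continuous, on the compact set $[0,1]\times\mathcal C$; since $\phi(0,h)=0$ for all $h$, for each $\epsilon>0$ there is a $\delta>0$ with $\phi(x,h)\leq\epsilon$ whenever $x\leq\delta$ and $h\in\mathcal C$, and choosing $\Delta_n<\delta$ gives \eqref{Arzela Ascoli deterministic convergence}. (Equivalently, $g_n(h):=\sup_{x\in[0,\Delta_n]}\phi(x,h)$ is continuous, decreases to $0$ pointwise as the intervals nest, and converges uniformly on $\mathcal C$ by Dini's theorem; this is the Arzel\`a--Ascoli/Dini circle of ideas.)

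For (ii), I would fix $\omega$ off the null set where $\sigma$ is not compact and set $\mathcal C:=\overline{\sigma(B_U)}$, the closure of the image of the unit ball $B_U\subset U$, which is compact precisely because $\sigma$ is a compact operator. Then
\[
\sup_{x\in[0,\Delta_n]}\|(I-S(x))\sigma\|_{op}=\sup_{x\in[0,\Delta_n]}\sup_{u\in B_U}\|(I-S(x))\sigma u\|_H\leq \sup_{h\in\mathcal C}\sup_{x\in[0,\Delta_n]}\|(I-S(x))h\|_H,
\]
which tends to $0$ by part (i), giving the almost sure convergence. For the $L^p$ claim I would dominate by $\sup_{x\in[0,\Delta_n]}\|(I-S(x))\sigma\|_{op}\leq(1+M(1))\|\sigma\|_{op}$; since $\|\sigma\|_{op}\in L^p(\Omega)$ the right-hand side has integrable $p$-th power, so dominated convergence promotes the a.s.\ convergence to convergence in $L^p(\Omega;\R)$.

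For (iii), write $G_n(r):=\EE[\sup_{x\in[0,\Delta_n]}\|(I-S(x))\sigma_r\|_{op}^p]$. Part (ii) applied to each fixed $\sigma_r$ gives $G_n(r)\to0$ for every $r\in[0,t]$, and the remaining task is uniformity in $r$. Here I would use $p$-th mean continuity: as $r\mapsto\sigma_r$ is continuous into $L^p(\Omega;L(U,H))$ on the compact interval $[0,t]$, it is uniformly so, hence for $\epsilon>0$ there is a finite net $r_1,\dots,r_N$ such that every $r$ admits some $r_j$ with $\EE[\|\sigma_r-\sigma_{r_j}\|_{op}^p]<\epsilon$. Combining the Lipschitz estimate $\big|\sup_x\|(I-S(x))T\|_{op}-\sup_x\|(I-S(x))T'\|_{op}\big|\leq(1+M(1))\|T-T'\|_{op}$ with $(a+b)^p\leq2^{p-1}(a^p+b^p)$ yields
\[
\sup_{r\in[0,t]}G_n(r)\leq 2^{p-1}\max_{1\leq j\leq N}G_n(r_j)+2^{p-1}(1+M(1))^p\epsilon .
\]
Letting $n\to\infty$ annihilates the finite maximum by (ii), so $\limsup_n\sup_r G_n(r)\leq 2^{p-1}(1+M(1))^p\epsilon$, and $\epsilon\downarrow0$ gives \eqref{Arzela Ascoli random operator process L2 convergence}.

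The main obstacle is precisely the uniformization in part (i): operator-norm continuity of $S$ is unavailable, so the argument must convert the pointwise-in-$h$ convergence $S(x)h\to h$ into convergence uniform over $\mathcal C$ by exploiting compactness (equivalently, equicontinuity plus Dini). Once this is secured, (ii) and (iii) are mechanical: (ii) identifies the correct compact set as the image of the unit ball under the compact operator and adds a routine domination, while (iii) only needs uniform $p$-th mean continuity to patch the pointwise-in-$r$ limit into a uniform one via a finite net.
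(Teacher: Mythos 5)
Your proof is correct, and while it follows the same overall architecture as the paper's — part (i) is the analytic heart, (ii) reduces to (i) via the compact closure of the image of the unit ball plus dominated convergence, and (iii) patches the pointwise-in-$r$ limit from (ii) into a uniform one using $p$-th mean continuity — your implementation of the two uniformization steps is genuinely different. In (i) the paper applies the Arzel\`a--Ascoli theorem to the family $F=\{h\mapsto \sup_{x\in[0,\Delta_n]}\|(I-S(x))h\|_H : n\in\N\}$, extracts a uniformly convergent subsequence, and upgrades to the full sequence by monotonicity in $n$; you instead observe that $(x,h)\mapsto S(x)h$ is jointly continuous, invoke Heine--Cantor on the compact product $[0,1]\times\mathcal C$, and read off the conclusion from $\phi(0,h)=0$ (with Dini as an equivalent packaging). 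In (iii) the paper applies Arzel\`a--Ascoli a second time, to the functions $g_n(s)=\EE[\sup_{x\in[0,\Delta_n]}\|(I-S(x))\sigma_s\|_{op}^p]^{1/p}$ on $[0,t]$, again using equicontinuity from the Minkowski inequality and monotonicity in $n$; you replace this with a finite $\epsilon$-net in $[0,t]$ combined with the Lipschitz estimate and $(a+b)^p\le 2^{p-1}(a^p+b^p)$. Both routes are sound; yours is arguably more elementary (no subsequence extraction, no need to identify the limit of a convergent subsequence), at the modest cost of the factor $2^{p-1}$ in the net argument, while the paper's version keeps a single reusable compactness tool (Arzel\`a--Ascoli plus monotonicity) doing all the work in both places. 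One cosmetic difference: you take $\mathcal C$ to be the closure of the image of the unit ball where the paper uses the unit sphere; both give the operator norm as a supremum over $\mathcal C$, so this is immaterial.
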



\begin{proof}
We want to apply the Arzel\'{a}-Ascoli Theorem for the subset $$F:=\lbrace h\mapsto \sup_{x\in [0,\Delta_n]}\| (I-S(x))h\|_H: n\in \mathbb{N}\rbrace\subset C(\mathcal{C};\mathbb{R}).$$
 It is clear that $F$ is pointwise bounded and the equicontinuity holds, since there is a common Lipschitz-constant (independent of $n$):
 \begin{align*}
& | \sup_{x\in [0,\Delta_n]}\| (I-S(x))h\|_H- \sup_{x\in [0,\Delta_n]}\| (I-S(x))g\|_H|\\
&\qquad\qquad\leq  \sup_{x\in [0,\Delta_n]} \| (I-S(x))(h-g)\|_H\\
&\qquad\qquad\leq   \sup_{x\in [0,\Delta_1]} \| (I-S(x))\|_H \|h-g\|_{H},
 \end{align*}
 for all $g,h\in \mathcal C$.
This implies the relative compactness of $F$ with respect to the sup-norm on $C(\mathcal C;\mathbb{R})$. Therefore, there exists a subsequence such that, for $n\to\infty$, we have
\begin{align*}
\sup_{h\in \mathcal C}\sup_{x\in [0,\Delta_{n_k}]}\|(I-S(x))h\|\to 0.
\end{align*}
 Since the sequence $\sup_{x\in [0,\Delta_{n}]}\|(I-S(x))\cdot\|$ is monotone in $n$, we obtain convergence for the whole sequence. This shows (\ref{Arzela Ascoli deterministic convergence}).

Let $B_0(1):=\lbrace h\in H: \| h\|_H=1\rbrace$ be the unit sphere in $H$ and fix $\omega\in \Omega$, such that $\sigma(\omega)$ is compact. Since $\sigma(\omega)$ is compact, $\mathcal C:=\overline{\sigma(\omega)(B_0(1))}$ is compact in $H$.
The set $F(\omega)$ of functionals of the form
\begin{align*}
f_n:=\sup_{x\in [0,\Delta_n]}\|(I-\mathcal S (x))\cdot \|_{H}: 
\mathcal C\to \mathbb{R}
\end{align*}
forms an equicontinuous and pointwise bounded subset of $C(\mathcal C
;\mathbb{R})$. 
Thus,  by (\ref{Arzela Ascoli deterministic convergence})
\begin{align*}
\sup_{x\in [0,\Delta_n]}\|(I-\mathcal S (x))\sigma(\omega)\|_{op}=& \sup_{x\in [0,\Delta_n]} \sup_{\|h\|=1}\|(I-\mathcal S (x))\sigma(\omega)h\|_{H}\\
\leq&  \sup_{g\in \mathcal C 
} f_n(g)\\
\to & 0, \quad\text{ as } n\to \infty.
\end{align*}
This gives almost sure convergence.
Since the sequence is uniformly bounded by
$
(1+M(T)) \| \sigma\|_{op},
$
which has finite $p$th moment, we obtain $L^p(\Omega;\mathbb R)$-convergence by the dominated convergence theorem, and therefore (\ref{Arzela Ascoli random operator convergence}) holds.

To verify the convergence (\ref{Arzela Ascoli random operator process L2 convergence}) we argue as follows:  
Defining 
$$
g_n(s):=\left(\mathbb E[\sup_{x\in [0,\Delta_n]}\|(I-\mathcal S (x))\sigma_s\|_{op}^p]\right)^{\frac 1p},
$$
we obtain pointwise boundedness with the bound
$(1+M(T)) \mathbb E [\|\sigma_s\|_{op}^p]^{\frac 1p}$ and equicontinuity of $\lbrace g_n:n\in\mathbb N\rbrace\subset C([0,t];\mathbb{R})$ by the continuity in the $p$th mean of the process $(\sigma_s)_{s\in[0,T]}$, since by the Minkowski inequality
\begin{align*}
\vert g_n(t)-g_n(s)\vert     &\leq \left(\mathbb E \left[\sup_{x\in[0,\Delta_n]}\|(I-\mathcal S (x))(\sigma_t-\sigma_s)\|_{op}^p\right]\right)^{\frac 1p}\\
     &\leq (I+M(T)) \left(\mathbb E \left[\|\sigma_t-\sigma_s\|_{op}^p\right]\right)^{\frac 1p}.
\end{align*}
By the Arzel\'{a}-Ascoli Theorem this induces the convergence of a subsequence of $(b_n)_{n\in\mathbb N}$ in the $\sup$-norm and thus since $b_n$ decreases pointwise with $n$, the convergence of the whole sequence. 
For all $s\in [0,T]$ we have by (\ref{Arzela Ascoli random operator convergence}) that $(\sup_{x\in [0,\Delta_n]}\|(I-\mathcal S (x))\sigma_s\|_{op})$ goes to zero as $n\to \infty$ almost surely. By uniqueness of the limit (in probability), this implies that $b_n(s)$ converges to zero and thus, $\sup_{s\in[0,T]}b_n(s)$ goes to zero.
\end{proof}

Recall also the following fact:
\begin{lemma}
The family $(\mathcal{S}(t)^*)_{t\geq0}$ of adjoint operators of the $C_0$-semigroup $(\mathcal{S}(t))_{t\geq0}$ forms again a $C_0$-semigroup on $H$.
\end{lemma}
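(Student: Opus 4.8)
The plan is to verify the three defining properties of a $C_0$-semigroup for the adjoint family $(\mathcal{S}(t)^*)_{t\geq 0}$, namely $\mathcal{S}(0)^*=I$, the composition law, and strong continuity. The first two are purely algebraic and transfer from $(\mathcal{S}(t))_{t\geq 0}$ by dualisation; the whole difficulty is concentrated in strong continuity, where I would make essential use of the fact that $H$ is a Hilbert space (and in particular reflexive).

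First I would dispose of the algebraic properties. Since $\mathcal{S}(0)=I$, we have $\mathcal{S}(0)^*=I$. Because the semigroup operators commute, $\mathcal{S}(t+s)=\mathcal{S}(s)\mathcal{S}(t)$, and applying $(KL)^*=L^*K^*$ gives $\mathcal{S}(t+s)^*=\mathcal{S}(t)^*\mathcal{S}(s)^*$. Local boundedness is equally immediate: $\|\mathcal{S}(t)^*\|_{\text{op}}=\|\mathcal{S}(t)\|_{\text{op}}$, so $\sup_{t\in[0,T]}\|\mathcal{S}(t)^*\|_{\text{op}}=M(T)<\infty$ by \eqref{Global Bound for the semigroup}.

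For strong continuity I would first reduce to right-continuity at the origin: granted $\|\mathcal{S}(t)^*g-g\|\to 0$ as $t\downarrow 0$ for every $g\in H$, continuity at any $t_0>0$ follows from the composition law and local boundedness through $\|\mathcal{S}(t_0\pm t)^*g-\mathcal{S}(t_0)^*g\|\leq M(t_0+1)\,\|\mathcal{S}(t)^*g-g\|$. To handle the origin I would argue first on a dense subspace. Writing $A$ for the generator of $\mathcal{S}$, for $g\in D(A^*)$ and $h\in D(A)$ I would differentiate $s\mapsto\langle \mathcal{S}(s)^*g,h\rangle_H=\langle g,\mathcal{S}(s)h\rangle_H$, obtaining $\langle g,\mathcal{S}(s)Ah\rangle_H=\langle \mathcal{S}(s)^*A^*g,h\rangle_H$; integrating and invoking density of $D(A)$ yields the Bochner identity $\mathcal{S}(t)^*g-g=\int_0^t\mathcal{S}(s)^*A^*g\,ds$, and hence $\|\mathcal{S}(t)^*g-g\|\leq t\,M(t)\,\|A^*g\|\to 0$. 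Then I would use that $D(A^*)$ is dense in $H$: since $A$ is densely defined and closed on a Hilbert space, von Neumann's theorem guarantees $A^*$ is densely defined. A standard $\varepsilon/3$ estimate based on the uniform bound $M(1)$ extends the convergence from $D(A^*)$ to all of $H$.

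The main obstacle is precisely strong continuity, which can genuinely fail for adjoint semigroups on non-reflexive Banach spaces; the decisive ingredient is reflexivity of $H$, entering through the density of $D(A^*)$, and the key technical step is the integral representation of $\mathcal{S}(t)^*g-g$ on $D(A^*)$, whose existence as a Bochner integral rests on the weak (hence strong, by separability of $H$) measurability and boundedness of the integrand $s\mapsto\mathcal{S}(s)^*A^*g$.
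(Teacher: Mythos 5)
Your proof is correct: the paper gives no argument of its own for this lemma, citing only Section 5.14 of Engel--Nagel, and your argument is essentially the standard one from that reference for reflexive spaces — dualise the algebraic identities, establish strong continuity at $0$ on the dense subspace $D(A^*)$ via the Bochner identity $\mathcal{S}(t)^*g-g=\int_0^t\mathcal{S}(s)^*A^*g\,ds$, and extend by density using the uniform bound $M(1)$. You also correctly identify and justify the one step that genuinely requires the Hilbert-space (reflexive) structure, namely the density of $D(A^*)$, which is precisely where the statement fails for adjoint semigroups on non-reflexive Banach spaces.
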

\begin{proof}
See Section 5.14 in \cite{Engel1999}.
\end{proof}

Now we can proceed with the proof of our main theorem in the next subsection.

\subsubsection{Proof of Theorem \ref{T: LLN for semigroup case}}
The operator bracket process for the semigroup-adjusted increment takes the form
\begin{equation}
\label{eq:variation-increments}
\langle\langle \widetilde{\Delta}_n^iY\rangle\rangle=\int_{t_{i-1}}^{t_i}\mathcal S(t_i-s)\sigma_sQ\sigma_s^*\mathcal S(t_i-s)^*ds.
\end{equation}
For $i\in \{1, \dots, \ul\}$ we denote by  $
\Delta_n^iW:=W_{t_i}-W_{t_{i-1}}$ and:
\begin{align*}
    \tilde{\beta}_i^n&:=\mathcal S (t_i-t_{i-1})\sigma_{t_{i-1}}\Delta_n^i W,\\
    \tilde{\chi}_i^n&:=\int_{t_{i-1}}^{t_i}[\mathcal S (t_i-s)\sigma_s-\mathcal S (t_i-t_{i-1})\sigma_{t_{i-1}}]dW_s.
\end{align*}
Then 
\begin{align*}
\tilde{\Delta}_n^iY&=  \tilde{\beta}_i^n +\tilde{\chi}_i^n.
\end{align*}

To this end, fix some $T>0$. Using the triangle inequality, we can estimate
\begin{align}\nonumber
&\sup_{t\in[0,T]}\left\Vert\sum_{i=1}^{\ul}(\tilde{\Delta}_n^iY)^{\otimes 2}
-\int_0^t\sigma_sQ\sigma_s^*ds \right \Vert_{\text{HS}}\\\label{eq:component1} 
  &\leq  \sup_{t\in[0,T]}\left\Vert\sum_{i=1}^{\ul}(\tilde{\Delta}_n^iY)^{\otimes 2}
      -(\tilde{\beta}_i^n)^{\otimes 2}\right \Vert_{\text{HS}}
    \\\label{eq:component2} 
    &\qquad +\sup_{t\in[0,T]} \left \Vert\sum_{i=1}^{\ul}(\tilde{\beta}_i^n)^{\otimes 2}
  -\mathcal S (t_i-t_{i-1})\sigma_{t_{i-1}}Q\sigma_{t_{i-1}}^*\mathcal S (t_i-t_{i-1})^* \Delta_n \right\Vert_{\text{HS}}\\\label{eq:component3} 
   &\qquad + \sup_{t\in[0,T]}\left\Vert\sum_{i=1}^{\ul}   \int_{(i-1)\Delta_n}^{i\Delta_n}\mathcal S (t_i-t_{i-1})\sigma_{t_{i-1}}Q\sigma_{t_{i-1}}^*\mathcal S (t_i-t_{i-1})^*\notag \right.\\
 &\qquad\qquad\left. -\mathcal S (t_i-s)\sigma_sQ\sigma_s^*\mathcal S(t_i-s)^*ds \right\Vert_{\text{HS}}\\
 &\qquad+\sup_{t\in[0,T]}\left\Vert\sum_{i=1}^{\ul}
 \langle\langle \tilde{\Delta}_n^iY\rangle\rangle-\int_0^t\sigma_sQ\sigma_s^*ds \right \Vert_{\text{HS}}\label{eq:component4}.
 \end{align}

Before we proceed, we need the following result:
\begin{lemma}\label{le:bounds}
Under Assumption \ref{as:fourthmomentvol}, we have
\begin{align}\label{eq:boundbeta}
&\EE\left[\Vert\tilde{\beta}_i^n\Vert_H\right]\leq M(\Delta_n)\sqrt{\text{Tr}(Q) \sqrt{C_2(T)}} \Delta_n^{1/2}, \\
&\EE\left[\Vert\tilde{\beta}_i^n\Vert_H^2\right]\leq M(\Delta_n)^2\text{Tr}(Q) \sqrt{C_2(T)} \Delta_n,\\
&\EE\left[\Vert\tilde{\beta}_i^n\Vert_H^4\right]\leq M(\Delta_n)^4(\text{Tr}(Q)+2\text{Tr}_2(Q)) C_2(T) \Delta_n^2.
\end{align}
Under Assumptions \ref{as:smoothvol}, \ref{as:fourthmomentvol} and either \ref{as:Q is more than Hilbert Schmidt}(a) or \ref{as:Q is more than Hilbert Schmidt}(b), we have
\begin{align}\label{eq:boundchi}
    & \EE\left[\Vert\tilde{\chi}_i^n\Vert_H^2\right]
 \leq  \Delta_na_n(T),
\end{align}
for some constant $K(T)>0$ and a sequence $(a_n(T))_{n\in\mathbb{N}}$ of real numbers converging to zero.
\end{lemma}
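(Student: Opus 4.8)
The plan is to treat the two building blocks separately, exploiting throughout that $\sigma_{t_{i-1}}$ is $\Fi$-measurable while the increment $\Delta_n^iW$ is independent of $\Fi$ with law $N(0,\Delta_nQ)$. For the moments of $\tilde{\beta}_i^n=\mathcal S(\Delta_n)\sigma_{t_{i-1}}\Delta_n^iW$ I would first peel off the semigroup using $\|\mathcal S(\Delta_n)\|_{\text{op}}\le M(\Delta_n)$ from \eqref{Global Bound for the semigroup}, and then submultiplicativity to write $\|\tilde{\beta}_i^n\|_H\le M(\Delta_n)\|\sigma_{t_{i-1}}\|_{\text{op}}\|\Delta_n^iW\|_U$. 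Conditioning on $\Fi$ decouples the adapted factor $\|\sigma_{t_{i-1}}\|_{\text{op}}$ from the Gaussian factor $\|\Delta_n^iW\|_U$; the relevant even moments of the latter are $\EE[\|\Delta_n^iW\|_U^2]=\Delta_n\text{Tr}(Q)$ and $\EE[\|\Delta_n^iW\|_U^4]=\Delta_n^2(\text{Tr}(Q)^2+2\text{Tr}_2(Q))$, which are exactly the quantities computed in Lemma \ref{lemma:4thmoment}. The volatility moments are then controlled via Assumption \ref{as:fourthmomentvol} together with the Cauchy--Schwarz step recorded in the remark following it. This yields the second- and fourth-moment estimates directly, while the first-moment bound follows from the second by Jensen's inequality; these steps are essentially bookkeeping.

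The substantial part is the bound on $\tilde{\chi}_i^n$. I would begin from the It\^o isometry of \cite[Chapter 8]{PZ2007}, giving $\EE[\|\tilde{\chi}_i^n\|_H^2]=\int_{t_{i-1}}^{t_i}\EE[\|(\mathcal S(t_i-s)\sigma_s-\mathcal S(\Delta_n)\sigma_{t_{i-1}})Q^{1/2}\|_{\text{HS}}^2]\,ds$, and then split the integrand through
$$\mathcal S(t_i-s)\sigma_s-\mathcal S(\Delta_n)\sigma_{t_{i-1}}=\mathcal S(t_i-s)(\sigma_s-\sigma_{t_{i-1}})+\big(\mathcal S(t_i-s)-\mathcal S(\Delta_n)\big)\sigma_{t_{i-1}}.$$
For the first summand I would bound $\|\mathcal S(t_i-s)\|_{\text{op}}\le M(\Delta_n)$ and invoke Assumption \ref{as:smoothvol}, so that its expected squared HS norm is at most $2M(\Delta_n)^2C_1(T)^2\Delta_n^{2\alpha}$ pointwise in $s\in[t_{i-1},t_i]$; integrating over an interval of length $\Delta_n$ produces a contribution of the form $\Delta_n\cdot O(\Delta_n^{2\alpha})$, which vanishes.

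The crux is the second summand, and here the two alternatives in Assumption \ref{as:Q is more than Hilbert Schmidt} together with the compactness machinery of Proposition \ref{C: Application of Arzela Ascoli} become indispensable, precisely because $\mathcal S$ is only strongly, not uniformly, continuous. Using the semigroup property I would rewrite $\mathcal S(t_i-s)-\mathcal S(\Delta_n)=\mathcal S(t_i-s)(I-\mathcal S(s-t_{i-1}))$ with $s-t_{i-1}\in[0,\Delta_n]$, so the HS norm is at most $M(\Delta_n)\|(I-\mathcal S(s-t_{i-1}))\sigma_{t_{i-1}}Q^{1/2}\|_{\text{HS}}$. Under Assumption \ref{as:Q is more than Hilbert Schmidt}(a) I would factor $\sigma_{t_{i-1}}Q^{1/2}=\mathcal K_{t_{i-1}}\mathcal T$ and apply Lemma \ref{lem:HS-banachalg} to reach the bound $M(\Delta_n)\|\mathcal T\|_{\text{HS}}\sup_{x\in[0,\Delta_n]}\|(I-\mathcal S(x))\mathcal K_{t_{i-1}}\|_{\text{op}}$; taking expectations and the supremum over the base point, Proposition \ref{C: Application of Arzela Ascoli}(iii) (with $p=2$, using the mean-square continuity and a.s. compactness of $\mathcal K$) shows this tends to zero. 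Under Assumption \ref{as:Q is more than Hilbert Schmidt}(b) I would instead use $\|I-\mathcal S(x)\|_{\text{op}}\le x\|A\|_{\text{op}}e^{\|A\|_{\text{op}}x}$ together with Assumption \ref{as:fourthmomentvol}, obtaining an $O(\Delta_n^2)$ bound.

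Collecting the two summands gives $\EE[\|\tilde{\chi}_i^n\|_H^2]\le\Delta_na_n(T)$ with $a_n(T)\to0$, as asserted. I expect the main obstacle to be exactly this last estimate: the strong-but-not-uniform continuity of the semigroup means one cannot simply bound $\|(I-\mathcal S(x))\sigma_{t_{i-1}}Q^{1/2}\|_{\text{HS}}$ by a fixed operator-norm rate, and one is forced to convert the Hilbert--Schmidt estimate into an operator-norm estimate on a \emph{compact} operator before the Arzel\`a--Ascoli argument can be brought to bear. This conversion is the sole reason Assumption \ref{as:Q is more than Hilbert Schmidt} is imposed, and keeping the constants uniform in the base point $t_{i-1}$ (so that the supremum over $r\in[0,T]$ in Proposition \ref{C: Application of Arzela Ascoli}(iii) is available) is the delicate point to check.
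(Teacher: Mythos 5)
Your proposal is correct and follows essentially the same route as the paper: the Itô isometry and the independence of $\Delta_n^iW$ from $\Fi$ (with the Gaussian moments of Lemma \ref{lemma:4thmoment}) for the $\tilde{\beta}_i^n$ bounds, and for $\tilde{\chi}_i^n$ the Itô isometry followed by the split into $\mathcal S(t_i-s)(\sigma_s-\sigma_{t_{i-1}})$ and $\mathcal S(t_i-s)(I-\mathcal S(s-t_{i-1}))\sigma_{t_{i-1}}$, with Assumption \ref{as:smoothvol} handling the first piece and Assumption \ref{as:Q is more than Hilbert Schmidt} plus Proposition \ref{C: Application of Arzela Ascoli}(iii) (or uniform continuity in case (b)) handling the second. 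You also correctly identify the conversion from a Hilbert--Schmidt to an operator-norm estimate on a compact factor as the step that forces Assumption \ref{as:Q is more than Hilbert Schmidt}, which is exactly how the paper argues.
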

\begin{proof}
First notice that the trace class property of $Q$ yields $\|Q^{1/2}\Vert_{\text{HS}}^2=\text{Tr}(Q)<\infty$. Using the It\^{o} isometry, see 
\citet[Corollary 8.7, p.~123]{PZ2007}, we deduce from Assumption 2 that 
\begin{align*}
    \EE\left[\Vert\tilde{\beta}_i^n\Vert_H^2\right] &= \Delta_n \EE\left[\left\Vert S(t_i-t_{i-1})\sigma_{t_{i-1}}Q^{1/2}\right\Vert_{\text{HS}}^2\right]\\
    &\leq  M(\Delta_n)^2\Delta_n\EE\left[\Vert \sigma_{t_{i-1}}\Vert_{\text{op}}^2\right]\Vert Q^{1/2}\Vert_{\text{HS}}^2 \\
    &\leq M(\Delta_n)^2\text{Tr}(Q) \sqrt{C_2(T)} \Delta_n,
\end{align*}
   where $M(\Delta_n)$ is given by \eqref{Global Bound for the semigroup}. 
An application of the Cauchy-Schwarz inequality gives  
$$
\EE\left[\Vert\tilde{\beta}_i^n\Vert_H\right]\leq\sqrt{\EE\left[\Vert\beta_i^n\Vert_H^2\right]},
$$ 
which leads to the result for $p=1$.

For the fourth moment, we argue as follows: By the independent increment property of $W$, we have that
$\Delta_i^nW$ is independent of the $\mathcal F_{(i-1)\Delta_n}$-measurable random variable $\sigma_{(i-1)\Delta_n}$. Thus, again by using the bound \eqref{Global Bound for the semigroup} on the semigroup  gives
\begin{align*}
   \EE\left[\Vert\tilde{\beta}_i^n\Vert_H^4\right]&\leq M(\Delta_n)^4 \EE\left[\Vert\sigma_{t_{i-1}\Delta_n}\Vert_{\text{op}}^4\Vert\Delta_i^nW\Vert_H^4\right]
   \\
   &=M(\Delta_n)^4\EE\left[\Vert\sigma_{t_{i-1}}\Vert_{\text{op}}^4\right]\EE\left[\Vert\Delta_i^nW\Vert_H^4\right] \\
   &\leq M(\Delta_n)^4C_2(T) \left(\text{Tr}(Q)^2+2\text{Tr}_2(Q)\right)\Delta_n^2,
\end{align*}
after appealing to Lemma \ref{lemma:4thmoment} and Assumption \ref{as:fourthmomentvol}.

We have, by Assumption \ref{as:smoothvol}, that
\begin{align*}
  & \sup_{s\in (t_{i-1},t_i]} \EE\left[\Vert(\sigma_s-\sigma_{t_{i-1}})Q^{1/2} \Vert_{\text{HS}}^2\right]
 \leq C_1^2(T) \Delta_n^{2\alpha}.
\end{align*}
Hence, for all $i\in\{1, \dots, \ul\}$
\begin{align}\label{Zweigstelle1}
   \int_{t_{i-1}}^{t_i} \EE\left[\Vert(\sigma_s-\sigma_{(i-1)\Delta_n})Q^{1/2} \Vert_{\text{HS}}^2\right] ds \leq  C_1^2(T)\Delta_n^{1+2\alpha}.
\end{align}
By the It\^{o} isometry
\begin{align}\label{Zweigstelle2}
\EE \left[\| \tilde{\chi}_i^n\|_H^2\right]=& \int_{t_{i-1}}^{t_i} \mathbb{E}\left[\|(\mathcal S (t_i-s) \sigma_s-\mathcal S (t_i-t_{i-1})\sigma_{t_{i-1}})Q^{\frac 12}\|^2_{HS}\right]ds\\\notag
\leq &  \int_{t_{i-1}}^{t_i} \mathbb{E}\left[M(\Delta_n)^2\|(\sigma_s-\mathcal S (s-t_{i-1})\sigma_{t_{i-1}})Q^{\frac{1}{2}}\|_{\text{HS}}^2\right]ds\\\notag
\leq &  2M(\Delta_n)^2\int_{t_{i-1}}^{t_i} \mathbb{E}\left[\|(\sigma_s-\sigma_{t_{i-1}})Q^{\frac{1}{2}}\|_{\text{HS}}^2+\|(\mathcal S (s-t_{i-1})\sigma_{t_{i-1}}-\sigma_{t_{i-1}})Q^{\frac{1}{2}}\|_{\text{HS}}^2\right]ds,
\end{align}
where we used the fact that $ \mathcal S (t_i-t_{i-1})=\mathcal S (t_i-s)\mathcal S (s-t_{i-1})$ in the first inequality.

Assume now  Assumption \ref{as:Q is more than Hilbert Schmidt}(a) holds and 
denote by $\sigma_sQ^{\frac 12}=\mathcal{K}_s\mathcal T$ the corresponding decomposition. We obtain
\begin{align*}
\EE \left[\| \tilde{\chi}_i^n\|_H^2\right]&\leq 2M(\Delta_n)^2\int_{t_{i-1}}^{t_i}  \mathbb{E}\left[\|(\mathcal S (s-t_{i-1})-I)\mathcal{K}_{t_{i-1}}\|_{op}^2\right]\|\mathcal{T} \|_{\text{HS}}^2\\
&\qquad +\EE\left[\|(\sigma_s-\sigma_{t_{i-1}} )Q^{\frac{1}{2}}\|_{HS}^2\right]ds\\
&\leq 2M(\Delta_n)^2\left( \Delta_n \mathbb{E}\left[\sup_{x\in [0,\Delta_n]}\|(I-\mathcal S (x))\mathcal{K}_{t_{i-1}}\|_{op}^2\right] \|\mathcal{T} \|_{\text{HS}}^2
+C_1^2(T) \Delta_n^{1+2\alpha}\right).
 \end{align*}
The assertion follows 
 with $$a_n(T)=2M(\Delta_n)^2\left(\sup_{s\in[0,T]}\mathbb{E}\left[\sup_{x\in [0,\Delta_n]}\|(I-\mathcal S (x))\mathcal{K}_{s}\|_{op}^2\right] \|\mathcal{T} \|_{\text{HS}}^2
+C_1^2(T) \Delta_n^{2\alpha}\right),$$
 by \eqref{Arzela Ascoli random operator process L2 convergence} in Corollary \ref{C: Application of Arzela Ascoli}, since $(\mathcal{K}_{s})_{s\in[0,T]}$ is mean square continuous and $\mathcal{K}_s$ is almost surely a compact operator for all $s\in [0,T]$. 

Assume now Assumption \ref{as:Q is more than Hilbert Schmidt}(b) holds.
By (\ref{Zweigstelle2}) and (\ref{Zweigstelle1}) and Assumption \ref{as:fourthmomentvol} we obtain 
\begin{align*}
&\EE \left[\| \tilde{\chi}_i^n\|_H^2\right] \\
&\leq   2M(\Delta_n)^2\int_{t_{i-1}}^{t_i} \mathbb{E}\left[\|(\sigma_s-\sigma_{t_{i-1}})Q^{\frac{1}{2}}\|_{\text{HS}}^2+\|(\mathcal S (s-t_{i-1})\sigma_{t_{i-1}}-\sigma_{t_{i-1}})Q^{\frac{1}{2}}\|_{\text{HS}}^2\right]ds\\
&\leq 2M(\Delta_n)^2 (\int_{t_{i-1}}^{t_i}\mathbb{E}\left[ \|(\sigma_s-\sigma_{t_{i-1}})Q^{\frac{1}{2}}\|_{\text{HS}}^2  +\sup_{r\in[0,\Delta_n]} \|(\mathcal S (r)-I\|_{op}^2\| \sigma_{t_{i-1}}Q^{\frac{1}{2}}\|_{\text{HS}}^2\right]ds)\\
&\leq 2M(\Delta_n)^2 \left(C_1^2(T) \Delta_n^{1+2\alpha}+\Delta_n \sup_{r\in[0,\Delta_n]} \|(\mathcal S (r)-I\|_{op}^2 \sqrt{C_2(T)} \text{Tr}(Q)\right).
\end{align*}
This shows the assertion with 
$$a_n(T)=2M(\Delta_n)^2 \left( \sup_{r\in[0,\Delta_n]} \|(\mathcal S (r)-I\|_{op}^2 \sqrt{C_2(T)} \text{Tr}(Q)+C_1^2(T) \Delta_n^{2\alpha}\right),$$
since, by the uniform continuity of the semigroup, $ \sup_{r\in[0,\Delta_n]} \|(\mathcal S (r)-I\|_{op}$ converges to zero as $n\to\infty$.
\end{proof}

\begin{remark}
In the following, we need Assumption \ref{as:Q is more than Hilbert Schmidt} only if we want to apply Lemma \ref{le:bounds}, where we needed it to verify that the sequence $a_n$ converges to zero.
The convergence rate of $a_n$ is determined by both, the path-regularity of the volatility process as well as the convergence rate of the semigroup (on compacts) as $t\to 0$. The convergence speed of this sequence will essentially determine the rate of convergence of the sequence $b_n$ from Theorem \ref{T: LLN for semigroup case}.
\end{remark}

\begin{remark}We notice that for the first and second moment estimates of $\Vert\tilde{\beta}_i^n\Vert_H$, we could relax the assumption on $\sigma$ slightly by assuming $\Vert\sigma_s Q^{1/2}\Vert_{\text{HS}}$ having finite second moment. However, the fourth moment of $\Vert\tilde{\beta}_i^n\Vert_H$ is most conveniently estimated based 
on a fourth moment condition on the operator norm of $\sigma$. 
\end{remark}

With the results in Lemma \ref{le:bounds} at hand, we prove convergence of the four components \eqref{eq:component1}-\eqref{eq:component4}.
First, we show the convergence of \eqref{eq:component1}.
\begin{proposition}
Under Assumptions \ref{as:smoothvol}, \ref{as:fourthmomentvol} and \ref{as:Q is more than Hilbert Schmidt}, we have
\begin{align*} \lim_{n\rightarrow\infty}\EE\left[\sup_{t\in[0,T]}  
\left\Vert\sum_{i=1}^{\ul}\left[(\tilde{\Delta}_n^iY)^{\otimes 2}
      -(\tilde{\beta}_i^n)^{\otimes 2}\right]  \right\Vert_{\text{HS}}\right]=0.
\end{align*}
\end{proposition}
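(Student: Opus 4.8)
The plan is to reduce everything to the additive decomposition $\tilde{\Delta}_n^iY=\tilde{\beta}_i^n+\tilde{\chi}_i^n$ together with the bilinearity of the tensor product. Expanding $(\tilde{\beta}_i^n+\tilde{\chi}_i^n)^{\otimes 2}$ and subtracting $(\tilde{\beta}_i^n)^{\otimes 2}$ gives the pointwise identity
\[
(\tilde{\Delta}_n^iY)^{\otimes 2}-(\tilde{\beta}_i^n)^{\otimes 2}
=\tilde{\beta}_i^n\otimes\tilde{\chi}_i^n+\tilde{\chi}_i^n\otimes\tilde{\beta}_i^n+\tilde{\chi}_i^n\otimes\tilde{\chi}_i^n,
\]
so the problem collapses to estimating three cross terms. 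The key algebraic input is the elementary norm identity $\Vert a\otimes b\Vert_{\text{HS}}=\Vert a\Vert_H\Vert b\Vert_H$ for $a,b\in H$, which converts each tensor term into a product of $H$-norms of $\tilde{\beta}_i^n$ and $\tilde{\chi}_i^n$.

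Next I would apply the triangle inequality. Because each summand's Hilbert--Schmidt norm is nonnegative, the supremum over $t\in[0,T]$ of the partial sums is dominated by the full sum up to $i=\ulT$, giving
\[
\EE\left[\sup_{t\in[0,T]}\left\Vert\sum_{i=1}^{\ul}\left[(\tilde{\Delta}_n^iY)^{\otimes 2}-(\tilde{\beta}_i^n)^{\otimes 2}\right]\right\Vert_{\text{HS}}\right]
\leq\sum_{i=1}^{\ulT}\left(2\,\EE\left[\Vert\tilde{\beta}_i^n\Vert_H\Vert\tilde{\chi}_i^n\Vert_H\right]+\EE\left[\Vert\tilde{\chi}_i^n\Vert_H^2\right]\right).
\]
This is the crucial simplification: no martingale or Doob-type argument is needed, since we are bounding a sum of nonnegative quantities rather than exploiting cancellation across increments.

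I would then bound the cross term by Cauchy--Schwarz, $\EE[\Vert\tilde{\beta}_i^n\Vert_H\Vert\tilde{\chi}_i^n\Vert_H]\leq\EE[\Vert\tilde{\beta}_i^n\Vert_H^2]^{1/2}\EE[\Vert\tilde{\chi}_i^n\Vert_H^2]^{1/2}$, and insert the estimates from Lemma \ref{le:bounds}, namely $\EE[\Vert\tilde{\beta}_i^n\Vert_H^2]\leq M(\Delta_n)^2\text{Tr}(Q)\sqrt{C_2(T)}\,\Delta_n$ and $\EE[\Vert\tilde{\chi}_i^n\Vert_H^2]\leq\Delta_n a_n(T)$. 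The cross term is therefore of order $\Delta_n\sqrt{a_n(T)}$ and the quadratic term of order $\Delta_n a_n(T)$, both uniformly in $i$. Since there are at most $\ulT\leq T/\Delta_n$ summands, the factor $\Delta_n$ cancels against the number of terms, leaving a bound of the form
\[
T\left(2\,M(\Delta_n)\sqrt{\text{Tr}(Q)\sqrt{C_2(T)}}\,\sqrt{a_n(T)}+a_n(T)\right).
\]

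To conclude, I observe that $M(\Delta_n)$ stays bounded uniformly in $n$ by the Hille--Yosida estimate \eqref{Global Bound for the semigroup}, while $a_n(T)\to 0$ by construction in Lemma \ref{le:bounds}; this last fact is precisely where Assumption \ref{as:Q is more than Hilbert Schmidt} and Proposition \ref{C: Application of Arzela Ascoli} are used. Hence the displayed bound tends to zero as $n\to\infty$. I expect the main obstacle to be purely organisational: checking that the $\Delta_n^{-1}$ growth in the number of increments is exactly compensated by the $\Delta_n$-smallness of each $\EE[\Vert\tilde{\beta}_i^n\Vert_H\Vert\tilde{\chi}_i^n\Vert_H]$ and $\EE[\Vert\tilde{\chi}_i^n\Vert_H^2]$. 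The only genuinely nontrivial ingredient, the convergence $a_n(T)\to 0$, is already furnished by Lemma \ref{le:bounds}, so the argument here is essentially a clean bookkeeping of moment bounds.
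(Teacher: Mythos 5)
Your proposal is correct and follows essentially the same route as the paper: the same expansion of $(\tilde{\beta}_i^n+\tilde{\chi}_i^n)^{\otimes 2}$, the bound $\Vert\tilde{\xi}_i^n\Vert_{\text{HS}}\leq\Vert\tilde{\chi}_i^n\Vert_H^2+2\Vert\tilde{\beta}_i^n\Vert_H\Vert\tilde{\chi}_i^n\Vert_H$, domination of the supremum by the full sum of nonnegative terms, and Cauchy--Schwarz combined with the moment estimates of Lemma \ref{le:bounds} so that the $\Delta_n$-smallness of each term cancels the $O(\Delta_n^{-1})$ number of increments, with $a_n(T)\to 0$ delivering the limit. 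Your observation that no Doob-type cancellation is needed here is accurate; the paper reserves that device for the subsequent component.
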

\begin{proof}
Define
\begin{align*}
    \tilde{\xi}_i^n&:=(\tilde{\Delta}_n^i Y)^{\otimes 2}-(\tilde{\beta}_i^n)^{\otimes 2} =(\tilde{\beta}_i^n +\tilde{\chi}_i^n)^{\otimes 2}-(\tilde{\beta}_i^n)^{\otimes 2}
    \\
    &=(\tilde{\chi}_i^n)^{\otimes 2}+ \tilde{ \beta}_i^n\otimes \tilde{\chi}_i^n + \tilde{\chi}_i^n \otimes \tilde{\beta}_i^n.
\end{align*}
By the triangle inequality, we note that 
\begin{align}\nonumber
  \Vert \tilde{\xi}_i^n \Vert_{\text{HS}} 
  &\leq \Vert (\tilde{\chi}_i^n)^{\otimes 2}\Vert_{\text{HS}} 
  +\Vert \tilde{\beta}_i^n\otimes \tilde{\chi}_i^n\Vert_{\text{HS}} 
  +\Vert\tilde{\chi}_i^n \otimes \tilde{\beta}_i^n\Vert_{\text{HS}} 
  \\ \label{eq:xis}
  &= \Vert \tilde{\chi}_i^n\Vert_{H}^2 
  +2\Vert \tilde{\beta}_i^n\Vert_H\Vert \tilde{\chi}_i^n\Vert_{H}. 
\end{align}
Again appealing to the triangle inequality, it follows
\begin{align*}
  \sup_{t\in[0,T]} \left \Vert \sum_{i=1}^{\ul} \tilde{\xi}_i^n\right \Vert_{\text{HS}} \leq \sup_{t\in[0,T]}\sum_{i=1}^{\ul} \Vert \tilde{\xi}_i^n\Vert_{\text{HS}}\leq\sum_{i=1}^{\lfloor T/\Delta_n\rfloor} \Vert \tilde{\xi}_i^n\Vert_{\text{HS}}.
\end{align*}
Applying \eqref{eq:boundchi} in Lemma \ref{le:bounds} leads to
\begin{align*}
   \EE\left[\Vert \tilde{\chi}_i^n\Vert^2_H\right]
    \leq \Delta_n a_n(T).
\end{align*}
We next apply the Cauchy-Schwarz inequality to obtain, using the notation $K_n(T)= M(\Delta_n)^2\text{Tr}(Q) \sqrt{C_2(T)}$,
\begin{align*}
    \EE\left[\Vert\tilde{\beta}_i^n \Vert_{H}
    \Vert\tilde{\chi}_i^n\Vert_{H}\right] ^2
    &\leq \EE\left[\Vert\tilde{\beta}_i^n\Vert_{H}^2\right] \EE\left[\Vert\tilde{\chi}_i^n 
    \Vert_{H}^2\right]   \leq K_n(T) \Delta_n^{2} a_n(T),
\end{align*}
by \eqref{eq:boundbeta} and \eqref{eq:boundchi} in Lemma \ref{le:bounds}.
Altogether we have, since $a_n\to 0$ as $n\to \infty$, that
\begin{align}\label{convergence inequality for first summand}
   \EE \left[ \sup_{t\in[0,T]} \left\Vert \sum_{i=1}^{\ul} \tilde{\xi}_i^n\right\Vert_{\text{HS}}\right]
     \leq & \lfloor T/\Delta_n\rfloor (  \Delta_n a_n(T)+2 \sqrt{K_n(T)a_n(T)}\Delta_n),
\end{align}
converges to zero as $n\to \infty$ by Lemma \ref{le:bounds}.
\end{proof}
Now we prove the convergence of \eqref{eq:component2}.

\begin{proposition}
Under Assumptions \ref{as:smoothvol}, \ref{as:fourthmomentvol} and \ref{as:Q is more than Hilbert Schmidt} we have,
\begin{align*}
    \lim_{n\rightarrow\infty}\EE \left[\sup_{t\in[0,T]}  \left\Vert\sum_{i=1}^{\ul}\left\{(\tilde{\beta}_i^n)^{\otimes 2}- \mathcal S(t_i-t_{i-1})\sigma_{t_{i-1}}Q\sigma_{t_{i-1}}^*\mathcal S(t_i-t_{i-1})^*  \Delta_n\right\} \right\Vert_{\text{HS}}^2\right]=0. 
\end{align*}
\end{proposition}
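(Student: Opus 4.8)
The plan is to recognise the summand as a martingale difference and then invoke Doob's maximal inequality in the Hilbert space $L_{\text{HS}}(H)$ (which is Hilbert by Lemma \ref{lem:HS-banachalg}). Writing $A_i:=\mathcal S(\Delta_n)\sigma_{t_{i-1}}$, so that $\tilde{\beta}_i^n=A_i\Delta_n^iW$, the decisive computation is the conditional expectation of the rank-one operator $(\tilde{\beta}_i^n)^{\otimes 2}$ given $\Fi$. Since $\Delta_n^iW$ is independent of $\Fi$ and has covariance operator $\Delta_n Q$, testing the operator against $z,w\in H$ and using $\langle A_i\Delta_n^iW,z\rangle_H=\langle\Delta_n^iW,A_i^*z\rangle_U$ together with the covariance structure from the definition of the $Q$-Wiener process yields
\[
\EE\left[(\tilde{\beta}_i^n)^{\otimes 2}\mid\Fi\right]=\Delta_n\, \mathcal S(\Delta_n)\sigma_{t_{i-1}}Q\sigma_{t_{i-1}}^*\mathcal S(\Delta_n)^*.
\]
This is precisely the compensating term, so that each summand
\[
Z_i^n:=(\tilde{\beta}_i^n)^{\otimes 2}-\Delta_n\, \mathcal S(\Delta_n)\sigma_{t_{i-1}}Q\sigma_{t_{i-1}}^*\mathcal S(\Delta_n)^*
\]
satisfies $\EE[Z_i^n\mid\Fi]=0$ and is square-integrable in $L_{\text{HS}}(H)$, being a difference of trace-class operators whose norms are controlled by Lemma \ref{le:bounds}.

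Consequently the partial sums $S_k^n:=\sum_{i=1}^k Z_i^n$ form a discrete-time martingale with respect to $(\mathcal F_{k\Delta_n})_{k}$ with values in $L_{\text{HS}}(H)$. As $\sum_{i=1}^{\ul}Z_i^n=S_{\ul}^n$ and $\ul$ runs through the integers $0,\dots,\ulT$ while $t$ ranges over $[0,T]$, I would apply Doob's $L^2$ maximal inequality to get
\[
\EE\left[\sup_{0\leq t\leq T}\left\Vert\sum_{i=1}^{\ul}Z_i^n\right\Vert_{\text{HS}}^2\right]=\EE\left[\max_{0\leq k\leq\ulT}\Vert S_k^n\Vert_{\text{HS}}^2\right]\leq 4\,\EE\left[\Vert S_{\ulT}^n\Vert_{\text{HS}}^2\right].
\]
Orthogonality of the increments (for $j<i$ the operator $Z_j^n$ is $\Fi$-measurable while $\EE[Z_i^n\mid\Fi]=0$, hence $\EE[\langle Z_i^n,Z_j^n\rangle_{\text{HS}}]=0$) then reduces the right-hand side to
\[
\EE\left[\Vert S_{\ulT}^n\Vert_{\text{HS}}^2\right]=\sum_{i=1}^{\ulT}\EE\left[\Vert Z_i^n\Vert_{\text{HS}}^2\right].
\]

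To bound each term I would use that subtracting a conditional mean can only decrease the second moment, together with $\Vert x^{\otimes 2}\Vert_{\text{HS}}=\Vert x\Vert_H^2$ for a rank-one operator, giving
\[
\EE\left[\Vert Z_i^n\Vert_{\text{HS}}^2\right]\leq\EE\left[\Vert(\tilde{\beta}_i^n)^{\otimes 2}\Vert_{\text{HS}}^2\right]=\EE\left[\Vert\tilde{\beta}_i^n\Vert_H^4\right].
\]
The fourth-moment estimate in Lemma \ref{le:bounds} bounds this by $M(\Delta_n)^4(\text{Tr}(Q)^2+2\text{Tr}_2(Q))C_2(T)\Delta_n^2$, and since there are at most $\ulT\leq T/\Delta_n$ indices, the total sum is of order $\Delta_n^{-1}\cdot\Delta_n^2=\Delta_n$, which vanishes as $n\to\infty$.

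The main obstacle is the conditional-expectation identity of the first step: one must justify the interchange of conditional expectation with the tensor/operator structure in infinite dimensions and confirm that the resulting compensator coincides exactly with the term subtracted in the statement. Once this martingale structure is established, the rest is the standard Doob-plus-orthogonality argument combined with moment bounds already recorded in Lemma \ref{le:bounds}; it is precisely the $L^2$ (rather than $L^1$) formulation that makes Doob's inequality directly applicable and explains why only the fourth moment of $\tilde{\beta}_i^n$ — and none of the more delicate $\tilde{\chi}_i^n$ estimates — enters this component.
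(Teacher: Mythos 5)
Your proposal is correct and follows essentially the same route as the paper: identify the summands as $L_{\text{HS}}(H)$-valued martingale differences, apply Doob's $L^2$ maximal inequality, use orthogonality of the differences, and bound each term via the fourth-moment estimate for $\tilde{\beta}_i^n$ from Lemma \ref{le:bounds}, yielding a total of order $\Delta_n$. The only (harmless) deviations are that you verify the martingale property by a direct computation of $\EE[(\tilde{\beta}_i^n)^{\otimes 2}\mid\Fi]$ — which is valid, since $\Delta_n^iW$ is independent of $\Fi$ with covariance $\Delta_nQ$ — whereas the paper realises $\sum_i\tilde{\zeta}_i^n$ as increments of the martingale $R^{\otimes 2}-\langle\langle R\rangle\rangle$ for a step-integrand stochastic integral $R$, and that your Pythagorean bound $\EE[\Vert Z_i^n\Vert_{\text{HS}}^2]\leq\EE[\Vert\tilde{\beta}_i^n\Vert_H^4]$ is slightly cleaner than the paper's $(a+b)^2\leq 2(a^2+b^2)$ estimate, which handles the compensator term separately.
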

\begin{proof}
We define
\begin{align*}
    \tilde{\zeta}_i^n :=(\tilde{\beta}_i^n)^{\otimes 2}- \mathcal S(t_i-t_{i-1})\sigma_{t_{i-1}}Q\sigma_{t_{i-1}}^*\mathcal S (t_i-t_{i-1})^* \Delta_n.
\end{align*}
First we show that $\sup_{t\in[0,T]}\Vert\sum_{i=1}^{\ul}\tilde{\zeta}_i^n\Vert_{\text{HS}}$ has finite second moment. By the triangle inequality and
 Lemma \ref{lem:HS-banachalg} 
\begin{align*}
\sup_{t\in[0,T]} \left \Vert\sum_{i=1}^{\ul}\tilde{\zeta}_i^n\right \Vert_{\text{HS}}&\leq
\sum_{i=1}^{\lfloor T/\Delta_n\rfloor}\Vert\tilde{\zeta}_i^n\Vert_{\text{HS}} \\
&\leq\sum_{i=1}^{\lfloor T/\Delta_n\rfloor}\Vert(\tilde{\beta}_i^n)^{\otimes 2}\Vert_{\text{HS}}\\
 &\qquad +
\Delta_n\sum_{i=1}^{\lfloor T/\Delta_n\rfloor}\Vert \mathcal S(t_i-t_{i-1})\sigma_{t_{i-1}}Q\sigma_{t_{i-1}}^*\mathcal S (t_i-t_{i-1})^*\Vert_{\text{HS}} \\
&\leq \sum_{i=1}^{\lfloor T/\Delta_n\rfloor}\Vert\tilde{\beta}_i^n\Vert_H^2+\Delta_n\sum_{i=1}^{\lfloor T/\Delta_n\rfloor}\Vert\mathcal S (t_i-t_{i-1})\sigma_{t_{i-1}}Q^{1/2}\Vert^2_{\text{HS}} \\
&\leq \sum_{i=1}^{\lfloor T/\Delta_n\rfloor}\Vert\tilde{\beta}_i^n\Vert_H^2+\Delta_n\text{Tr}(Q)M(\Delta_n)^2\sum_{i=1}^{\lfloor T/\Delta_n\rfloor}\Vert\mathcal \sigma_{t_{i-1}}\Vert^2_{\text{op}}.
\end{align*}
Considering $\EE\left[\sup_{t\in[0,T]}\Vert\sum_{i=1}^{\ul}\tilde{\zeta}_i^n\Vert_{\text{HS}}^2\right]$, we get a finite sum of terms of the type 
$\EE\left[\Vert\tilde{\beta}_i^n\Vert_H^4\right]$, $\EE\left[\Vert\mathcal \sigma_{t_{i-1}}\Vert_{\text{op}}^4\right]$
and $\EE\left[\Vert\tilde{\beta}_i^n\Vert_H^2\Vert\sigma_{t_{i-1}}\Vert^2_{\text{op}}\right]$. The first is finite due to Lemma
\ref{le:bounds}, while the second is finite by the imposed Assumption \ref{as:fourthmomentvol}. For the third, we apply the Cauchy-Schwarz inequality and argue as for the first two. In conclusion, we obtain a finite second moment as desired.  

Note that $R_t=\int_0^t h_s dW(s)$ where $h_s=\sum_{i=1}^n S(t_i-t_{i-1})\sigma_{t_{i-1}} \mathbf{1}_{(t_{i-1},t_i]}(s)$ defines a martingale, such that $R_{t_m}=\sum_{j=1}^{m}\tilde{\beta}_j^n$. Then the squared process is
\begin{align*}
\int_0^{t_m} h_s dW(s)^{\otimes 2} =\sum_{i,j=1}^m\langle \tilde{\beta}_i^n,\cdot\rangle \tilde{\beta}_j^n
\end{align*}
and
\begin{align*}
\langle\langle &\int_0^{\cdot} h_s dW(s)\rangle\rangle_{t_m} \\
&\qquad=\int_0^{t_m} \sum_{i,j=1}^m\mathcal{S}(t_i-t_{i-1})\sigma_{t_{i-1}}Q\sigma_{t_{j-1}}^*\mathcal{S}(t_j-t_{j-1})^*\mathbf{1}_{[t_{i-1},t_i)}(s)\mathbf{1}_{[t_{j-1},t_j)}(s)ds\\
&\qquad=\int_0^{t_m} \sum_{i=1}^m\mathcal{S}(t_i-t_{i-1})\sigma_{t_{i-1}}Q\sigma_{t_{i-1}}^*\mathcal{S}(t_i-t_{i-1})^* \mathbf{1}_{[t_{i-1},t_i)}(s)ds.
\end{align*}
We obtain  that
\begin{align*}
\tilde{\zeta}_m^n&=\int_0^{t_m} h_s dW(s)^{\otimes 2}-\langle\langle \int_0^{\cdot} h_s dW(s)\rangle\rangle_{t_m}\\
&\qquad\qquad-\int_0^{t_{m-1}} h_s dW(s)^{\otimes 2}+\langle\langle \int_0^{\cdot} h_s dW(s)\rangle\rangle_{t_{m-1}}
\end{align*}
 forms a sequence of martingale differences with respect to $(\mathcal{F}_{t_{i-1}})_{i\in \N}$, by Remark \ref{rem:martingale}.
 This implies in particular, after double conditioning, that for $1\leq i\neq j\leq \ul $, 
 $$
 \mathbb{E}\left[\langle\tilde{\zeta}_i^n, \tilde{\zeta}_j^n\rangle_{\text{HS}}\right]=0.
 $$
 By Doob's martingale inequality we obtain
 \begin{align*}
  \EE  \left[\sup_{t\in[0,T]}\left\Vert\sum_{i=1}^{\ul}\tilde{\zeta}_i^n \right\Vert_{\text{HS}}^2 \right]\leq 4  \EE  \left[\left\Vert\sum_{i=1}^{\lfloor T/\Delta_n\rfloor}\tilde{\zeta}_i^n \right\Vert_{\text{HS}}^2 \right]= 4 \sum_{i=1}^{\lfloor T/\Delta_n\rfloor} \EE\left[\Vert\tilde{\zeta}_i^n\Vert_{\text{HS}}^2\right].
\end{align*}
Applying the  triangle inequality and the basic inequality $(a+b)^2\leq 2(a^2+b^2)$, we find
\begin{align*}
    \Vert \tilde{\zeta}_i^n\Vert_{\text{HS}}^2&\leq 2\left(\Vert(\tilde{\beta}_i^n)^{\otimes 2}\Vert_{\text{HS}}^2 + \Vert \mathcal S (t_i-t_{i-1})\sigma_{t_{i-1}}Q\sigma_{t_{i-1}}^*\mathcal S (t_i-t_{i-1})^*  \Vert_{\text{HS}}^2  \Delta_n^2 \right)\\
    &\leq2\left(\Vert \tilde{\beta}_i^n\Vert_{H}^4 + \Vert \sigma_{t_{i-1}}Q\sigma_{t_{i-1}}^* \Vert_{\text{HS}}^2 M(\Delta_n)^4 \Delta_n^2 \right).
\end{align*}
Denoting again $K_n(T)= M(\Delta_n)^2\text{Tr}(Q) \sqrt{C_2(T)}$, we can now apply 
Lemma \ref{le:bounds} to conclude that
\begin{align}\label{convergence inequality for second summand}
  \sum_{i=1}^{\lfloor T/\Delta_n\rfloor} \EE\left[\Vert\tilde{\zeta}_i^n\Vert_{\text{HS}}^2\right]
  &\leq 2\left(K_n(T)\lfloor T/\Delta_n\rfloor \Delta_n^2+M(\Delta_n)^4\Delta_n\EE\left[ \sum_{i=1}^{\lfloor T/\Delta_n\rfloor} \Vert \sigma_{t_{i-1}}Q\sigma_{t_{i-1}}^*  \Vert_{\text{HS}}^2 \Delta_n \right]\right)\\
  &\to 0, \text{ as } n \to \infty,\notag 
\end{align}
since the expectation operator on the right-hand side of the inequality above converges to 
$$
\EE\left[ \int_0^T\Vert \sigma_{s}Q\sigma_{s}^*  \Vert_{\text{HS}}^2ds\right]<\infty.
$$
Hence, the proposition follows. 
\end{proof}


Next, we prove the convergence of \eqref{eq:component3}.
\begin{proposition}
Assume that Assumptions \ref{as:smoothvol} and \ref{as:fourthmomentvol} hold. 
Then 
\begin{align*}
   \lim_{n\rightarrow\infty}\EE [  \sup_{t\in[0,T]}\Vert \sum_{i=1}^{\ul} \int_{(t_{i-1}}^{t_i} & (\mathcal S (t_i-t_{i-1})\sigma_{t_{i-1}}Q \sigma_{t_{i-1}}^* \mathcal S(t_i-t_{i-1})^* \\
   &-\mathcal S (t_i-s)\sigma_sQ\sigma_s^*\mathcal S (t_i-s)^*ds \Vert_{HS}]=0.
\end{align*}
\end{proposition}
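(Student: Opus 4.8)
The plan is to turn the supremum into a deterministic sum, pull the Hilbert--Schmidt norm inside the Bochner integral, and then split each integrand into a piece that measures the motion of the semigroup on a frozen volatility and a piece that measures the increment of the volatility. First I would note that each summand $\int_{t_{i-1}}^{t_i}(\cdots)\,ds$ depends on $t$ only through the index range, so by the triangle inequality
\begin{align*}
\sup_{t\in[0,T]}\left\Vert\sum_{i=1}^{\ul}\int_{t_{i-1}}^{t_i}(\cdots)\,ds\right\Vert_{\text{HS}}\leq\sum_{i=1}^{\ulT}\int_{t_{i-1}}^{t_i}\left\Vert(\cdots)\right\Vert_{\text{HS}}\,ds,
\end{align*}
and it suffices to bound $\sum_{i=1}^{\ulT}\int_{t_{i-1}}^{t_i}\EE\left[\Vert(\cdots)\Vert_{\text{HS}}\right]ds$. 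Writing $G_r:=\sigma_rQ\sigma_r^*=(\sigma_rQ^{1/2})(\sigma_rQ^{1/2})^*$ and using the semigroup property $\mathcal S(\Delta_n)=\mathcal S(t_i-s)\mathcal S(s-t_{i-1})$, I would insert the intermediate term $\mathcal S(t_i-s)G_{t_{i-1}}\mathcal S(t_i-s)^*$ and decompose the integrand as
\begin{align*}
\mathcal S(t_i-s)\big[\mathcal S(s-t_{i-1})G_{t_{i-1}}\mathcal S(s-t_{i-1})^*-G_{t_{i-1}}\big]\mathcal S(t_i-s)^*+\mathcal S(t_i-s)\big[G_{t_{i-1}}-G_s\big]\mathcal S(t_i-s)^*,
\end{align*}
which I call $(\mathrm{I})$ and $(\mathrm{II})$.

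Term $(\mathrm{II})$ is the volatility-increment part and is the routine one. Using the bound $M(T)$ from \eqref{Global Bound for the semigroup} and Lemma \ref{lem:HS-banachalg}, I would estimate $\Vert(\mathrm{II})\Vert_{\text{HS}}\leq M(T)^2\Vert\sigma_{t_{i-1}}Q\sigma_{t_{i-1}}^*-\sigma_sQ\sigma_s^*\Vert_{\text{HS}}$ and then, via the elementary identity $BB^*-B'B'^*=B(B-B')^*+(B-B')B'^*$ with $B=\sigma_{t_{i-1}}Q^{1/2}$, $B'=\sigma_sQ^{1/2}$, bound this by $(\Vert\sigma_{t_{i-1}}Q^{1/2}\Vert_{\text{op}}+\Vert\sigma_sQ^{1/2}\Vert_{\text{op}})\Vert(\sigma_{t_{i-1}}-\sigma_s)Q^{1/2}\Vert_{\text{HS}}$. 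Taking expectations and applying Cauchy--Schwarz, the operator-norm factor is controlled in $L^2$ by Assumption \ref{as:fourthmomentvol} and the Hilbert--Schmidt factor is $O(\Delta_n^\alpha)$ in $L^2$ by Assumption \ref{as:smoothvol} since $|s-t_{i-1}|\leq\Delta_n$; hence $\EE[\Vert(\mathrm{II})\Vert_{\text{HS}}]\leq \text{const}\cdot\Delta_n^\alpha$ uniformly in $i$.

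Term $(\mathrm{I})$ is the genuine obstacle, and it is here that one must avoid needing Assumption \ref{as:Q is more than Hilbert Schmidt}, since the semigroup converges to the identity only strongly and hence not in Hilbert--Schmidt norm. Setting $y:=s-t_{i-1}\in[0,\Delta_n]$, I would first reduce $\Vert\mathcal S(y)G_{t_{i-1}}\mathcal S(y)^*-G_{t_{i-1}}\Vert_{\text{HS}}\leq(M(T)+1)\Vert(I-\mathcal S(y))G_{t_{i-1}}\Vert_{\text{HS}}$, using self-adjointness of $G_{t_{i-1}}$ and invariance of the Hilbert--Schmidt norm under adjoints. The key step is then to split the norm favourably through the factorisation $G_{t_{i-1}}=(\sigma_{t_{i-1}}Q^{1/2})(\sigma_{t_{i-1}}Q^{1/2})^*$: by Lemma \ref{lem:HS-banachalg},
\begin{align*}
\Vert(I-\mathcal S(y))G_{t_{i-1}}\Vert_{\text{HS}}\leq\Vert(I-\mathcal S(y))\sigma_{t_{i-1}}Q^{1/2}\Vert_{\text{op}}\,\Vert\sigma_{t_{i-1}}Q^{1/2}\Vert_{\text{HS}}.
\end{align*}
Because $\sigma_rQ^{1/2}$ is automatically Hilbert--Schmidt, hence compact, I can apply Proposition \ref{C: Application of Arzela Ascoli}(iii) with $p=2$ to the process $(\sigma_rQ^{1/2})_r$ (which is mean-square continuous by Assumption \ref{as:smoothvol} and in $L^2$ by Assumption \ref{as:fourthmomentvol}) to get $c_n:=\sup_{r\in[0,T]}\EE[\sup_{x\in[0,\Delta_n]}\Vert(I-\mathcal S(x))\sigma_rQ^{1/2}\Vert_{\text{op}}^2]\to0$. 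Cauchy--Schwarz against the uniformly bounded factor $\EE[\Vert\sigma_{t_{i-1}}Q^{1/2}\Vert_{\text{HS}}^2]\leq\sqrt{C_2(T)}$ then yields $\EE[\Vert(\mathrm{I})\Vert_{\text{HS}}]\leq\text{const}\cdot c_n^{1/2}$ uniformly in $i$. The point is that the op-norm Arzelà--Ascoli convergence suffices precisely because the Hilbert--Schmidt norm is carried by the companion factor $\sigma_{t_{i-1}}Q^{1/2}$, which is why only Assumptions \ref{as:smoothvol} and \ref{as:fourthmomentvol} are needed.

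Finally I would assemble the pieces: the $\lfloor T/\Delta_n\rfloor$ intervals each have length $\Delta_n$, so
\begin{align*}
\sum_{i=1}^{\ulT}\int_{t_{i-1}}^{t_i}\EE\left[\Vert(\mathrm{I})+(\mathrm{II})\Vert_{\text{HS}}\right]ds\leq\ulT\,\Delta_n\big(\text{const}\cdot c_n^{1/2}+\text{const}\cdot\Delta_n^\alpha\big)\leq T\big(\text{const}\cdot c_n^{1/2}+\text{const}\cdot\Delta_n^\alpha\big),
\end{align*}
which tends to $0$ as $n\to\infty$, establishing the claim.
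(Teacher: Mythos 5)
Your proposal is correct and follows essentially the same route as the paper's proof: reduce the supremum to a deterministic sum over $i\le\lfloor T/\Delta_n\rfloor$, split the integrand into a semigroup-motion-on-frozen-volatility piece and a volatility-increment piece, pair the operator-norm difference $\Vert(I-\mathcal S(x))\sigma_{t_{i-1}}Q^{1/2}\Vert_{\text{op}}$ with the Hilbert--Schmidt companion factor via Lemma \ref{lem:HS-banachalg} and Cauchy--Schwarz, and invoke Proposition \ref{C: Application of Arzela Ascoli}(iii) on the compact operators $\sigma_rQ^{1/2}$ together with Assumption \ref{as:smoothvol}; your $c_n$ is exactly the paper's $b_n(T)$. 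The only (immaterial) difference is that you insert the symmetric intermediate term $\mathcal S(t_i-s)\sigma_{t_{i-1}}Q\sigma_{t_{i-1}}^*\mathcal S(t_i-s)^*$, whereas the paper splits the left and right factors asymmetrically.
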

\begin{proof}
From the triangle and Bochner inequalities, we get
\begin{align*}
 &\Vert \sum_{i=1}^{\ul} \int_{t_{i-1}}^{t_i}(\mathcal S (t_i-t_{i-1})\sigma_{t_{i-1}}Q \sigma_{t_{i-1}}^* \mathcal S(t_i-t_{i-1})^*\\
   &\qquad-\mathcal S (t_i-s)\sigma_sQ\sigma_s^*\mathcal S (t_i-s)^*ds\Vert_{HS}
    \\
    &\qquad\qquad\leq 
   \sum_{i=1}^{\lfloor T/\Delta_n \rfloor} \int_{t_{i-1}}^{t_i}
     \Vert(\mathcal S (t_i-t_{i-1})\sigma_{t_{i-1}}Q \sigma_{t_{i-1}}^* \mathcal S(t_i-t_{i-1})^*\\
   &\qquad\qquad\qquad-\mathcal S (t_i-s)\sigma_sQ\sigma_s^*\mathcal S (t_i-s)^*\Vert_{\text{HS}}ds.
\end{align*}
Note that for $s\in (t_{i-1},t_i]$, we have 
\begin{align*}
 \mathcal S (t_i-t_{i-1})\sigma_{t_{i-1}}&Q \sigma_{t_{i-1}}^* \mathcal S(t_i-t_{i-1})^*-\mathcal S (t_i-s)\sigma_sQ\sigma_s^*\mathcal S (t_i-s)^*
   \\
   =&(\mathcal S (t_i- t_{i-1})\sigma_{t_{i-1}}-\mathcal S (t_i - s) \sigma_s)Q\sigma_{t_{i-1}}^*\mathcal S (t_i-t_{i-1})^*\\
   & +\mathcal S (t_i -s) \sigma_sQ(\sigma_{t_{i-1}}^*\mathcal{S}(t_i-t_{i-1})^*-\sigma_s^*\mathcal S (t_i-s)^*).
\end{align*}
Hence, using the triangle inequality and then the Cauchy-Schwarz inequality, we have
\begin{align*}
  &\EE \left[\Vert \mathcal S (t_i-t_{i-1})\sigma_{t_{i-1}}Q \sigma_{t_{i-1}}^* \mathcal S(t_i-t_{i-1})^*-\mathcal S (t_i-s)\sigma_sQ\sigma_s^*\mathcal S (t_i-s)^*\Vert_{\text{HS}}\right]^2 \\
   &\qquad= \EE\left[\Vert  
    (\mathcal S (t_i- t_{i-1})\sigma_{t_{i-1}}-\mathcal S (t_i - s) \sigma_s)Q\sigma_{t_{i-1}}^*\mathcal S (t_i-t_{i-1})^*\right.\\
   & \qquad\qquad\left.+\mathcal S (t_i -s) \sigma_sQ(\sigma_{t_{i-1}}^*\mathcal{S}(t_i-t_{i-1})^*-\sigma_s^*\mathcal S (t_i-s)^*)
    \Vert_{\text{HS}}\right]^2 \\
       &\qquad\leq  2 \EE \left[\Vert  
    (\mathcal S (t_i- t_{i-1})\sigma_{t_{i-1}}-\mathcal S (t_i - s) \sigma_s)Q^{\frac 12}\|_{op}\|Q^{\frac 12}\sigma_{t_{i-1}}^*\mathcal S (t_i-t_{i-1})^*\|_{\text{HS}}\right]^2\\
   & \qquad\qquad+2\mathbb E \left[\| \mathcal S (t_i -s) \sigma_sQ^{\frac 12}\|_{\text{HS}}\|Q^{\frac 12}(\sigma_{t_{i-1}}^*\mathcal{S}(t_i-t_{i-1})^*-\sigma_s^*\mathcal S (t_i-s)^*)
    \Vert_{\text{op}}\right]^2\\
        &\qquad\leq  2\EE \left[\Vert  
    (\mathcal S (t_i- t_{i-1})\sigma_{t_{i-1}}-\mathcal S (t_i - s) \sigma_s)Q^{\frac 12}\|_{op}^2\right]\EE\left[\|Q^{\frac 12}\sigma_{t_{i-1}}^*\mathcal S (t_i-t_{i-1})^*\|_{\text{HS}}^2\right]\\
   & \qquad\qquad+2\mathbb E \left[\| \mathcal S (t_i -s) \sigma_sQ^{\frac 12}\|_{\text{HS}}^2\right]\EE\left[\|Q^{\frac 12}(\sigma_{t_{i-1}}^*\mathcal{S}(t_i-t_{i-1})^*-\sigma_s^*\mathcal S (t_i-s)^*)
    \Vert_{\text{op}}^2\right] .
    \end{align*}
 Thus, using the identity      $S(t_i-t_{i-1})=S(t_i-s)S(s-t_{i-1})$, 
we get
    \begin{align*}
  &  \EE \left[\Vert  (\mathcal S (t_i-t_{i-1})\sigma_{t_{i-1}}Q \sigma_{t_{i-1}}^* \mathcal S(t_i-t_{i-1})^*-\mathcal S (t_i-s)\sigma_sQ\sigma_s^*\mathcal S (t_i-s)^*\Vert_{\text{HS}}\right]^2 \\
   &\qquad\leq 2M(\Delta_n)^4 \EE
     \left[\Vert(\mathcal S((s-t_{i-1}) \sigma_{t_{i-1}}- \sigma_s)Q^{1/2} \Vert_{\text{op}}^2\right]  \EE\left[\Vert Q^{1/2}\sigma_{t_{i-1}}^*\mathcal \Vert_{\text{HS}}^2\right]
     \\
     &\qquad\qquad+2M(\Delta_n)^4
      \EE\left[\Vert \sigma_s Q^{1/2} \Vert_{\text{HS}}^2\right]
      \EE \left[\Vert Q^{1/2}(\sigma_{t_{i-1}}^*\mathcal S (s-t_{i-1})^*-\sigma_s^*) \Vert_{\text{op}}^2\right]
      \\
       &\qquad\leq 4 M(\Delta_n)^4 \sup_{r\in[0,T]}  \EE\left[\Vert\sigma_rQ^{\frac 12} \Vert_{\text{HS}}^2\right] \EE
     \left[\sup_{x\in[0,\Delta_n]}\Vert(\mathcal S (x)\sigma_{t_{i-1}}- \sigma_s)Q^{\frac 12} \Vert_{\text{op}}^2\right].
     \end{align*}
By Assumption \ref{as:fourthmomentvol} we know that$$ A_n:=4 M(\Delta_n)^4 \sqrt{C_2(T)}\geq 2 M(\Delta_n)^4 \sup_{r\in[0,T]}   \EE[\Vert\sigma_rQ^{\frac 12} \Vert_{\text{HS}}^2].$$
Using Assumption \ref{as:smoothvol}, this gives the following estimate:
     \begin{align*}
      &\EE \left[\Vert(\mathcal S (t_i-t_{i-1})\sigma_{t_{i-1}}Q \sigma_{t_{i-1}}^* \mathcal S(t_i-t_{i-1})^*-\mathcal S (t_i-s)\sigma_sQ\sigma_s^*\mathcal S (t_i-s)^*\Vert_{\text{HS}}\right]^2 \\
       &\qquad\leq A_n(T) \EE
     \left[\sup_{x\in[0,\Delta_n]}\Vert(\mathcal S (x)\sigma_{t_{i-1}}-\sigma_{t_{i-1}}+\sigma_{t_{i-1}}- \sigma_s)Q^{\frac 12} \Vert_{\text{op}}^2\right]\\
        & \qquad\leq A_n(T)  2\left( \EE
     \left[\sup_{x\in[0,\Delta_n]}\left\Vert(\mathcal S (x)-I) \sigma_{t_{i-1}} Q^{\frac 12} \right\Vert_{\text{op}}^2\right]+\EE
     \left[\left\Vert(\mathcal  \sigma_{t_{i-1}}- \sigma_s)Q^{\frac 12} \right\Vert_{\text{op}}^2\right]\right) \\
         &\qquad \leq A_n(T) 2 (b_n(T)+C_1^2(T)\Delta_n^{2\alpha}),
\end{align*}
where $b_n(T):=\sup_{s\in[0,T]}\EE
     [\sup_{x\in[0,\Delta_n]}\left\Vert(I-\mathcal S (x)) \sigma_{s} Q^{\frac 12} \Vert_{\text{op}}^2\right]$ as before. We have that $(b_n(T))_{n\in\mathbb N}$ is a real sequence converging to 0 by \eqref{Arzela Ascoli random operator process L2 convergence} in Corollary \ref{C: Application of Arzela Ascoli}, since for each $s\in [0,T]$ the operator $\sigma_sQ^{\frac 12}$ is almost surely compact as a Hilbert-Schmidt operator and the process $(\sigma_sQ^{\frac 12})_{s\in [0,T]}$ is mean square continuous by Assumption \ref{as:smoothvol}. 
     
Summing up, we obtain
\begin{align}\label{convergence inequality for third summand}
 &\EE [  \Vert \sum_{i=1}^{\ul} \int_{t_{i-1}}^{t_i}  (\mathcal S (t_i-t_{i-1})\sigma_{t_{i-1}}Q \sigma_{t_{i-1}}^* \mathcal S(t_i-t_{i-1})^*
 \\
 &\qquad-\mathcal S (t_i-s)\sigma_sQ\sigma_s^*\mathcal S (t_i-s)^*ds\Vert_{HS}]\notag
    \\
    &\qquad\qquad\qquad\leq   \sum_{i=1}^{\lfloor T/\Delta_n\rfloor} \int_{t_{i-1}}^{t_i}  (  A_n(T) 2 (C_1^2(T)\Delta_n^{2\alpha}+ b_n(T)))^{\frac 12} ds\notag
    \\
     &\qquad\qquad\qquad=  \lfloor T/\Delta_n\rfloor \Delta_n ( A_n(T) 2 (C_1^2(T)\Delta_n^{2\alpha}+ b_n(T)))^{\frac 12}  \to 0, \text{ as } n \to \infty,\notag
\end{align}
and the proof is complete.
\end{proof}

Finally, we prove the convergence of (\ref{eq:component4}).

\begin{proposition}
Suppose that Assumption \ref{as:smoothvol} and \ref{as:fourthmomentvol} hold.
Then
$$\lim_{n\to\infty}\mathbb{E}\left[\sup_{0\leq t\leq T}\left\| \sum_{i=1}^{\ul}\langle\langle \tilde{\Delta}_n^iY\rangle\rangle - \int_0^t\sigma_sQ\sigma_s^*ds\right\|_{\text{HS}}\right]=0.$$
\end{proposition}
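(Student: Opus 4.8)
The plan is to compare the two Bochner integrals in $L_{\text{HS}}(H)$ directly. Writing $\lceil s\rceil_n:=t_i$ for the right endpoint of the subinterval $(t_{i-1},t_i]$ containing $s$ and $r_s:=\lceil s\rceil_n-s\in[0,\Delta_n)$, one has $\sum_{i=1}^{\ul}\langle\langle\tilde{\Delta}_n^iY\rangle\rangle=\int_0^{\ul\Delta_n}\mathcal S(r_s)\sigma_sQ\sigma_s^*\mathcal S(r_s)^*\,ds$. Subtracting $\int_0^t\sigma_sQ\sigma_s^*\,ds$ and splitting the target integral at $\ul\Delta_n$ gives, by the triangle inequality,
\begin{align*}
\Big\|\sum_{i=1}^{\ul}\langle\langle\tilde{\Delta}_n^iY\rangle\rangle-\int_0^t\sigma_sQ\sigma_s^*\,ds\Big\|_{\text{HS}}
&\leq \int_0^{\ul\Delta_n}\big\|\mathcal S(r_s)\sigma_sQ\sigma_s^*\mathcal S(r_s)^*-\sigma_sQ\sigma_s^*\big\|_{\text{HS}}\,ds\\
&\qquad+\int_{\ul\Delta_n}^{t}\|\sigma_sQ\sigma_s^*\|_{\text{HS}}\,ds,
\end{align*}
which I would treat as a \emph{main term} and a \emph{boundary term} separately.

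For the main term, the key algebraic step is the identity $BB^*-CC^*=(B-C)B^*+C(B-C)^*$ applied with $B=\mathcal S(r_s)\sigma_sQ^{1/2}$ and $C=\sigma_sQ^{1/2}$, so that $BB^*=\mathcal S(r_s)\sigma_sQ\sigma_s^*\mathcal S(r_s)^*$ and $CC^*=\sigma_sQ\sigma_s^*$. Pulling an operator norm out of each Hilbert--Schmidt product by Lemma \ref{lem:HS-banachalg}, together with $\|B-C\|_{\text{op}}=\|(I-\mathcal S(r_s))\sigma_sQ^{1/2}\|_{\text{op}}$ and $\|B\|_{\text{HS}}\leq M(\Delta_n)\|\sigma_sQ^{1/2}\|_{\text{HS}}$, yields the pointwise bound $(1+M(\Delta_n))\,\|(I-\mathcal S(r_s))\sigma_sQ^{1/2}\|_{\text{op}}\,\|\sigma_sQ^{1/2}\|_{\text{HS}}$. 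Since this integrand is nonnegative with upper limit at most $T$, taking the supremum over $t$ only enlarges the domain to $[0,\ulT\Delta_n]\subseteq[0,T]$; Tonelli and the Cauchy--Schwarz inequality in $\Omega$ then bound its expectation by $T(1+M(\Delta_n))\sqrt{b_n(T)}\,C_2(T)^{1/4}$, using $r_s\in[0,\Delta_n]$ to dominate the first factor by $b_n(T)$ from \eqref{Convergence Rate sequence} and Assumption \ref{as:fourthmomentvol} for the second. This vanishes because $b_n(T)\to0$; crucially, that convergence follows from part (iii) of Proposition \ref{C: Application of Arzela Ascoli} applied to the process $\sigma_sQ^{1/2}$, which is automatically compact as a Hilbert--Schmidt operator and mean-square continuous by Assumption \ref{as:smoothvol}, so Assumption \ref{as:Q is more than Hilbert Schmidt} is \emph{not} needed here.

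For the boundary term, since the integrand is nonnegative and $t-\ul\Delta_n<\Delta_n$, the supremum over $t\in[0,T]$ is dominated by $\max_{0\leq j\leq\ulT}Y_j$ with $Y_j:=\int_{j\Delta_n}^{(j+1)\Delta_n}\|\sigma_sQ\sigma_s^*\|_{\text{HS}}\,ds$. Rather than the crude $\EE[\max_j Y_j]\leq\sum_j\EE[Y_j]$, which would only recover the full $O(1)$ integral, I would use $\EE[\max_j Y_j]\leq(\sum_j\EE[Y_j^2])^{1/2}$; Jensen gives $\EE[Y_j^2]\leq\Delta_n\int_{j\Delta_n}^{(j+1)\Delta_n}\EE[\|\sigma_sQ\sigma_s^*\|_{\text{HS}}^2]\,ds\leq\Delta_n^2C_2(T)$, after bounding $\|\sigma_sQ\sigma_s^*\|_{\text{HS}}\leq\|\sigma_sQ^{1/2}\|_{\text{HS}}^2$ via Lemma \ref{lem:HS-banachalg} and invoking Assumption \ref{as:fourthmomentvol}. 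Summing over the $O(\Delta_n^{-1})$ indices yields $\EE[\max_j Y_j]=O(\Delta_n^{1/2})\to0$ (enlarging the horizon slightly, if needed, so that every subinterval lies within it).

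The step I expect to be the main obstacle is the main term: finding the algebraic decomposition of the semigroup conjugation $\mathcal S(r_s)(\cdot)\mathcal S(r_s)^*$ that isolates the loss of Hilbert--Schmidt regularity into a single operator-norm factor controlled by $b_n(T)$, and then justifying $b_n(T)\to0$ through the compactness of $\sigma_sQ^{1/2}$ and the Arzel\`a--Ascoli argument of Proposition \ref{C: Application of Arzela Ascoli}. The boundary term is, by comparison, only a matter of preferring the $L^2$-maximal estimate over the crude sum so as to recover a genuine $O(\Delta_n^{1/2})$ rate.
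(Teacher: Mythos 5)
Your treatment of the main term is essentially the paper's own proof: the same algebraic identity (the paper writes $\sigma_sQ\sigma_s^*-\mathcal S\sigma_sQ\sigma_s^*\mathcal S^*=(I-\mathcal S)\sigma_sQ\sigma_s^*+\mathcal S\sigma_sQ\sigma_s^*(I-\mathcal S^*)$, which is your $(B-C)B^*+C(B-C)^*$ with the roles of $B$ and $C$ swapped), the same use of Lemma \ref{lem:HS-banachalg} to isolate the operator-norm factor bounded by $b_n(T)$, and the same appeal to Proposition \ref{C: Application of Arzela Ascoli}(iii) via compactness of the Hilbert--Schmidt operators $\sigma_sQ^{1/2}$ and mean-square continuity from Assumption \ref{as:smoothvol}, correctly noting that Assumption \ref{as:Q is more than Hilbert Schmidt} is not needed. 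The one genuine divergence is the boundary term $\int_{\ul\Delta_n}^{t}\Vert\sigma_sQ\sigma_s^*\Vert_{\text{HS}}\,ds$: the paper disposes of it with the single integral $\int_{t_n}^{T}\EE[\Vert\sigma_sQ\sigma_s^*\Vert_{\text{HS}}]\,ds$, which accounts only for the final block and not for the supremum over all $t$, whereas your $L^2$-maximal estimate $\EE[\max_j Y_j]\leq(\sum_j\EE[Y_j^2])^{1/2}=O(\Delta_n^{1/2})$ handles the supremum correctly and is the more careful argument.
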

\begin{proof}
Recall the expression for $\langle\langle \tilde{\Delta}_n^iY\rangle\rangle$ in \eqref{eq:variation-increments}. By the triangle and Bochner inequalities, we find,
\begin{align*}
&\sup_{t\in[0,T]}\left \Vert \int_0^{\ul}\sigma_sQ\sigma_s^*ds-\sum_{i=1}^{\ul}\int_{t_{i-1}}^{t_i}
\mathcal S(t_i-s)\sigma_sQ\sigma_s^*\mathcal S(t_i-s)^*ds\right \Vert_{\text{HS}} \\
&\leq\sup_{t\in[0,T]} \sum_{i=1}^{\ul}\int_{t_{i-1}}^{t_i}\Vert
\sigma_sQ\sigma_s^*-\mathcal S(t_i-s)\sigma_sQ\sigma_s^*\mathcal S(t_i-s)^*\Vert_{\text{HS}}ds \\
&\leq\sum_{i=1}^{\lfloor T/\Delta_n\rfloor}\int_{t_{i-1}}^{t_i}\Vert
\sigma_sQ\sigma_s^*-\mathcal S(t_i-s)\sigma_sQ\sigma_s^*\mathcal S(t_i-s)^*\Vert_{\text{HS}}ds.
\end{align*}
By Lemma \ref{lem:HS-banachalg} and the Cauchy-Schwarz inequality we obtain 
\begin{align*}
&\mathbb{E}\left[\sup_{0\leq t\leq T}\left\| \sum_{i=1}^{\ul}\langle\langle \tilde{\Delta}_n^iY\rangle\rangle - \int_0^t\sigma_sQ\sigma_s^*ds\right\|_{\text{HS}}\right]\\
\leq & \sum_{i=1}^{\lfloor T/\Delta_n\rfloor}\int_{t_{i-1}}^{t_i}\mathbb E [\Vert
(I-\mathcal S(t_i-s))\sigma_sQ\sigma_s^*\Vert_{\text{HS}}]\\
&+\mathbb{E}[\Vert
\mathcal S(t_i-s)\sigma_sQ\sigma_s^*(I-S(t_i-s)^*)\Vert_{\text{HS}}]ds\\
&+ \int_{t_n}^{T}\mathbb{E}[\Vert
\sigma_sQ\sigma_s^*\Vert_{\text{HS}}]ds\\
\leq &\sum_{i=1}^{\lfloor T/\Delta_n\rfloor}\int_{t_{i-1}}^{t_i}\mathbb E[\Vert
(I-\mathcal S(t_i-s))\sigma_sQ^{\frac 12}\|_{\text{op}}\|Q^{\frac 12}\sigma_s^*\Vert_{\text{HS}} ]\\
&+M(\Delta_n)\mathbb E[\Vert
\sigma_sQ^{\frac 12}\|_{\text{HS}}\|Q^{\frac 12}\sigma_s^*(I-S(t_i-s)^*)\Vert_{\text{op}}]ds\\
&+ \int_{t_n}^{T}\mathbb{E}[\Vert
\sigma_s Q^{\frac 12}\Vert_{\text{HS}}^2]ds\\
\leq & \sup_{r\in [0,T]}\mathbb E[ \sup_{x\in [0,\Delta_n]}\Vert
(I-\mathcal S(x))\sigma_rQ^{\frac 12}\|_{op}^2]^{\frac 12}(1+M(\Delta_n)) \int_{0}^{T}\mathbb E[\|Q^{\frac 12}\sigma_s^*\Vert_{\text{HS}}^2]^{\frac 12} ds\\
&+ \int_{t_n}^{T}\mathbb{E}[\Vert
\sigma_sQ^{\frac 12}\Vert_{\text{HS}}^2]ds.
\end{align*}
Using Assumption \ref{as:fourthmomentvol}, we can estimate
\begin{align}\label{convergence inequality for fourth summand}
&\mathbb{E}\left[\sup_{0\leq t\leq T}\left\| \sum_{i=1}^{\ul}\langle\langle \tilde{\Delta}_n^iY\rangle\rangle - \int_0^t\sigma_sQ\sigma_s^*ds\right\|_{\text{HS}}\right]\\
\leq & (b_n(T))^{\frac 12}(1+M(\Delta_n)) T( \sqrt{C_2(T)}\text{Tr}(Q))^{\frac 12}+ (T-t_n)\sqrt{C_2(T)}\text{Tr}(Q)\notag\\
\to & 0 \quad \text{ as }n\to \infty. \notag
\end{align}
Here again $b_n(T):=\sup_{s\in[0,T]}\EE
     [\sup_{x\in[0,\Delta_n]}\left\Vert(I-\mathcal S (x)) \sigma_{s} Q^{\frac 12} \Vert_{\text{op}}^2\right]$, which is a real sequence converging to 0 by \eqref{Arzela Ascoli random operator process L2 convergence} in  Corollary \ref{C: Application of Arzela Ascoli}, since for each $s\in [0,T]$ the operator $\sigma_sQ^{\frac 12}$ is almost surely compact as a Hilbert-Schmidt operator and the process $(\sigma_sQ^{\frac 12})_{s\in [0,T]}$ is mean square continuous by Assumption \ref{as:smoothvol}.
\end{proof}


\subsubsection{Proof of Theorem \ref{T: Extension by localization}}

\begin{proof}[Proof of Theorem \ref{T: Extension by localization}]
 Define
 \begin{equation}
 Y^{(m)}_t:=\int_0^t\mathcal S (t-s)\sigma_s^{(m)}dW_s,
\end{equation}
 and
\begin{align*}
\mathcal{Z}^n_m:= &\sup_{0\leq s\leq t}\left\Vert\sum_{i=1}^{\uls}(\tilde{\Delta}_n^i Y^{(m)})^{\otimes 2}-\int_0^s\sigma_u^{(m)} Q\sigma_u^{(m)*}du\right\Vert_{\text{HS}},\\
\mathcal Z^n:= & \sup_{0\leq s\leq t}\left\Vert\sum_{i=1}^{\uls}(\tilde{\Delta}_n^i Y)^{\otimes 2}-\int_0^s\sigma_uQ\sigma_u^*du\right\Vert_{\text{HS}}.
\end{align*}
Since $\sigma^{(m)}$ satisfies the conditions of Theorem \ref{T: LLN for semigroup case}, we obtain that for all $m\in\mathbb N$ and $\epsilon>0$
\begin{equation}\label{Localization Convergence of Zmn}
\lim_{n\to\infty}\mathbb{P}[\mathcal{Z}^n_m>\epsilon]=0.
\end{equation}
We have $\mathcal{Z}_m^n=\mathcal{Z}^n$ on $\Omega_m$ and hence 
\begin{align*}
\mathbb{P}[\mathcal{Z}^n>\epsilon]&= \int_{\Omega_m} \mathbf{1}(\mathcal{Z}^n>\epsilon)d\mathbb P+\int_{\Omega_m^c} \mathbf{1}(\mathcal{Z}^n>\epsilon)d\mathbb P \\
&=\int_{\Omega_m} \mathbf{1}(\mathcal{Z}_m^n>\epsilon)d\mathbb P+\int_{\Omega_m^c} \mathbf{1}(\mathcal{Z}^n>\epsilon)d\mathbb P\\
&\leq \mathbb{P}[\mathcal{Z}_m^n>\epsilon] + \mathbb P[\Omega_m^c],
\end{align*}
which holds for all $n,m\in\mathbb N$.
Now, by virtue of \eqref{Localization Convergence of Zmn} we obtain for all $m\in\mathbb N$ that
$$
\limsup_{n\rightarrow\infty} \mathbb{P}[\mathcal{Z}^n>\epsilon]\leq \mathbb P[\Omega_m^c].
$$
By the continuity of $\mathbb P$ from below, $\mathbb P[\Omega^c_m]$ converges to $0$ as $m\to \infty$ and therefore
$$\lim_{n\rightarrow\infty} \mathbb{P}[\mathcal{Z}^n]=
\limsup_{n\rightarrow\infty} \mathbb{P}[\mathcal{Z}^n>\epsilon]= 0.
$$
\end{proof}

\subsection{Proofs of Section \ref{sec: Applications}}
We will now present the longer proofs of the results presented in Section \ref{sec: Applications}.

\subsubsection{Proof of Theorem \ref{T: Convergence rate for forward curves}}

\begin{proof}[Proof of Theorem \ref{T: Convergence rate for forward curves}]
Since for all $h\in H_{\beta}$ it is $|h(0)|\leq \|h\|_{\beta}$ we have for $\|h\|_{\beta}=1$ that
\begin{align*} 
\| (I-\mathcal S (x)) \sigma_r Q^{\frac 12} h\|_{\beta}\leq & \|(I-\mathcal S (x)) f_r\|_{\beta}+ \left\|(I-\mathcal S (x))\int_0^{\infty} q_r(\cdot,z)h'(z)dz\right\|_{\beta}\\
= & (1)+(2).
\end{align*}
The first summand can be estimated as follows, for some $\zeta\in (0,t)$ and $x<1$:
\begin{align}\label{First step in HJM-convergence speed}
    (1)= & \left(|f_r(x)|^2+\int_0^{\infty} (f_r'(y+x)-f_r'(y))^2e^{\beta y}dy\right)^{\frac 12}\notag\\
    &\leq (|f_r'(\zeta)|^2 x^2 + x^{2\gamma} \|L_1\|_{L^2(\mathbb{R}_+)}^2)^{\frac 12}\leq x^{\gamma} (|f_r'(\zeta)|+\|L_r^1\|_{L^2(\mathbb{R}_+)}).
\end{align}
We can show, using H{\"o}lder inequality, for all $h\in H_{\beta}$ such that $\|h\|_{\beta}=1$, that 
\begin{align*}
(2)=&\left(\int_0^{\infty}\left[\partial_y \int_0^{\infty}(q_r(y+x,z)-q_r(y,z))h'(z)dz\right]^2 e^{\beta y}dy\right)^{\frac 12}\\
=&\left(\int_0^{\infty}\left[ \int_0^{\infty}\left(e^{-\frac{\beta}{2}x}p_r(y+x,z)-p_r(y,z)\right) e^{\frac{\beta}{2}z-y}h'(z)dz\right]^2 e^{\beta y}dy\right)^{\frac 12}\\
=&\left(\int_0^{\infty}\left[ \int_0^{\infty}(e^{-\frac{\beta}{2}x}p_r(y+x,z)-p_r(y,z)) e^{\frac{\beta}{2}z}h'(z)dz\right]^2 dy\right)^{\frac 12}\\
\leq &\left(\int_0^{\infty} \int_0^{\infty}(e^{-\frac{\beta}{2}x}p_r(y+x,z)-p_r(y,z))^2 dz \|h\|_{\beta} dy\right)^{\frac 12}.
\end{align*}
Now we can estimate, for $x<1$,
\begin{align}\label{Second step in HJM-convergence speed}
(2)\leq &\left(\int_0^{\infty}\int_0^{\infty}(e^{-\frac{\beta}{2}x}(p_r(y+x,z)-p_r(y,z)))^2 dz dy\right)^{\frac 12}\notag\\
&+\left(\int_0^{\infty}\int_0^{\infty} (e^{-\frac{\beta}{2}x}-1)^2 p_r(y,z)^2dxdz\right)^{\frac 12}\notag\\
\leq & x^{\gamma}\|L_r^2\|_{L^2(\mathbb{R}_+^2)}+|e^{-\frac{\beta}{2}x}-1| \|  p_r\|_{L^2(\mathbb{R}_+^2)}\notag\\
\leq & x^{\gamma}\|L_r^2\|_{L^2(\mathbb{R}_+^2)}+  \frac{\beta}{2}   x \|  p_r\|_{L^2(\mathbb{R}_+^2)}\leq x^{\gamma}(\|L_r^2\|_{L^2(\mathbb{R}_+^2)}+  \frac{\beta}{2}    \|  p_r\|_{L^2(\mathbb{R}_+^2)}).
\end{align}
Combining \eqref{First step in HJM-convergence speed} and \eqref{Second step in HJM-convergence speed}, we obtain, for $\|h\|_{\beta}= 1$,
\begin{equation}
    \|(I-\mathcal S (x)) \sigma_r Q^{\frac 12} h\|_{\beta}\leq x^{\gamma} [|f_r'(\zeta)|+\|L_r^1\|_{L^2(\mathbb{R}_+)}+ \|L\|_{L^2(\mathbb{R}_+^2)}+  \frac{\beta}{2}    \|  p_r\|_{L^2(\mathbb{R}_+^2)}].
\end{equation}
Now we can conclude that
\begin{align*}
    b_n(T)= & \sup_{r\in[0,T]}\mathbb E[\sup_{x\in [0,\Delta_n]}\sup_{\|h\|_{\beta}=1}\|(I-\mathcal S (x)) \sigma_r Q^{\frac 12} h\|_{\beta}^2]\\
    \leq & \Delta_n^{2\gamma} \sup_{r\in [0,T]}\mathbb E
[(|f_r'(\zeta)|+\|L_r^1\|_{L^2(\mathbb{R}_+)}+ \|L\|_{L^2(\mathbb{R}_+^2)}+  \frac{\beta}{2}    \|  p_r\|_{L^2(\mathbb{R}_+^2)})^2].
\end{align*}
\end{proof}

\subsubsection{Proof of Lemma \ref{L: Mean Square Lipschitz continuity of OU-Processes}}
\begin{proof}[Proof of Lemma \ref{L: Mean Square Lipschitz continuity of OU-Processes}]
We have 
\begin{align*}
\Sigma_t-\Sigma_s= & (\mathbb S(t)-\mathbb S(s)) \Sigma_0+\int_s^t \mathbb S(t-u)d\mathcal L_u+ \int_0^s (\mathbb S(t-u)-\mathbb S(s-u))d\mathcal L_u\\:= & (1)+(2)+(3).
\end{align*}
As the semigroup $(\mathbb S(t))_{t\geq 0}$ is uniformly continuous, we can again use the fundamental equality \eqref{Fundamental Theorem of Semigroup Theory II} and the triangle inequality for Bochner integrals to deduce, for $s,t\in [0,T]$ and $t\geq s$, that
$$\|\mathbb{S}(t)-\mathbb S(s)\|_{\text{op}}=\left\|e^{\mathbb{B}s}\int_0^{t-s}   e^{\mathbb B x}\mathbb Bdx\right\|_{\text{op}}=\left\|\int_s^{t}  e^{\mathbb B x}\mathbb Bdx\right\|_{\text{op}}\leq e^{\|\mathbb{B}\|_{\text{op}}T}\|\mathbb B\|_{\text{op}}(t-s).$$
Denoting $U:=e^{\|\mathbb{B}\|_{\text{op}}T}\|\mathbb B\|_{\text{op}}$, this gives
$$\|(1)\|_{HS}\leq \|\mathbb S(t)-\mathbb S(s)\|_{\text{op}} \|\Sigma_0\|_{\text{HS}}\leq U \|\Sigma_0\|_{\text{HS}} (t-s).$$
This induces
$\mathbb{E}[\|(1)\|_{\text{HS}}^2]^{\frac 12}\leq  U \|\Sigma_0\|_{\text{HS}} (t-s)$.
Moreover, by the It\^{o} isometry
\begin{align*}
\mathbb{E}[\|(2)\|_{\text{HS}}^2]^{\frac 12}=\left(\int_s^t \|\mathbb S(t-u)Q_{\mathcal L}^{\frac 12}\|_{\text{HS}}^2du\right)^{\frac 12}\leq e^{\|\mathbb B\|_{\text{op}}T} \text{Tr}(Q_{\mathcal L})^{\frac 12} (t-s)^{\frac 12},
\end{align*}
where $Q_{\mathcal L}$ denotes the covariance operator of $\mathcal L$.
Finally, we can show again, by the It\^{o} isometry and the mean value inequality, that 
\begin{align*}
\mathbb{E}[\|(3)\|_{\text{HS}}^2]^{\frac 12}= & \left(\int_0^s \|(\mathbb S(t-u)-\mathbb S(s-u))Q_{\mathcal L}^{\frac 12}\|_{\text{HS}}^2du\right)^{\frac 12}\\
\leq &  \left(\int_0^s \|\mathbb S(t-s)-\mathcal{I}\|_{\text{op}}^2\|\mathbb S(s-u))Q_{\mathcal L}^{\frac 12}\|_{\text{HS}}^2du\right)^{\frac 12}\\
\leq &  \left( U^2(t-s)^2 \int_0^s\|\mathbb S(s-u))Q_{\mathcal L}^{\frac 12}\|_{\text{HS}}^2du\right)^{\frac 12}\\
 \leq & U (t-s)  e^{\|\mathbb B\|_{\text{op}}T}  \text{Tr}(Q_{\mathcal L})^{\frac 12}.
\end{align*}
Summing up, we obtain, for $t-s\leq 1$, 
\begin{align*}
  \mathbb{E}[\| (\Sigma_t-\Sigma_s)\|_{\text{HS}}^2]^{\frac 12} 
\leq &  (\mathbb{E}[\|(1)\|_{\text{HS}}^2]^{\frac 12}+\mathbb{E}[\|(2)\|_{\text{HS}}^2]^{\frac 12}+\mathbb{E}[\|(3)\|_{\text{HS}}^2]^{\frac 12})\\
\leq &  (U \|\Sigma_0\|_{\text{HS}} +e^{\|\mathbb B\|_{\text{op}}T} \text{Tr}(Q_{\mathcal L})^{\frac 12} (1+U)  ) (t-s)^{\frac 12}.
\end{align*}
Since also
  by the It\^{o} isometry, we obtain 
\begin{align*}
\sup_{t \in [0,T] }\mathbb E [\| \Sigma_t^2\|_{\text{HS}}]^{\frac 12}
\leq & \sup_{t \in [0,T] }\left(\|\mathbb S (t) \Sigma_0\|_{\text{HS}}+\mathbb E \left[\left \|\int_0^t \mathbb S(t-u)d\mathcal L_u \right\|_{\text{HS}}^2\right]^{\frac 12}\right) \\
\leq &\sup_{t \in [0,T] }\left(\|\mathbb S (t) \Sigma_0\|_{\text{HS}}+    \left(\int_0^T \|\mathbb S(t-u)Q_{\mathcal L}^{\frac 12}\|_{\text{HS}}^2du\right)^{\frac 12}\right)\\
\leq & e^{\|\mathbb B\|_{\text{op}} T}\|\Sigma_0\|_{\text{HS}}+ e^{\|\mathbb C\|_{\text{op}} T} \text{Tr}(Q_{\mathcal L})^{\frac 12} T^{\frac 12},
\end{align*}
the additional assertion follows by Lemma \ref{L:Squared Volatility Lemma}.
\end{proof}

\section{Discussion and outlook}\label{Sec:Conclusion}
Our paper develops a new asymptotic theory for high-frequency estimation of the volatility of infinite-dimensional stochastic evolution equations in an operator setting. We have defined the so-called semigroup-adjusted realised covariation (SARCV) and derived a weak law of large numbers based on uniform convergence in probability with respect to the Hilbert-Schmidt norm. Moreover, we have presented various examples where our new method is applicable.

Many articles on (high-frequency) estimation for stochastic partial differential equations rely on the so-called spectral approach and assume therefore the applicability of spectral theorems to the generator $A$ (cf. the survey article \cite{Cialenco2018}). This makes it difficult to apply these results on differential operators that do not fall into the symmetric and positive definite scheme, as for instance $A=\frac{d}{dx}$ in the space of forward curves presented in Section \ref{subsect:hjmm}, a case of relevance in financial applications that is included in our framework.   
Moreover, a lot of the related work assumes the volatility as a parameter of estimation to be real-valued (c.f.~the setting in \cite{Cialenco2018}). An exception is the spatio-temporal volatility estimation in the recent paper by \cite{Chong2020}
(see also \cite{ChongDalang2020} for limit laws for the power variation of fractional stochastic parabolic equations). Here, the stochastic integrals are considered in the sense of \cite{Walsh1986} and the generator is the Laplacian. In our analysis, we operate in the general Hilbert space framework in the sense of Peszat and Zabzcyck for stochastic integration and semigroups. 

In our framework, we work with high-frequent observations of Hilbert-space valued random elements, hence we have  observations, which are discrete in time but not necessarily in space.
Recent research on 
 inference for parabolic stochastic partial differential considered observation schemes which allow for discreteness in time and space, cf. \cite{Cialenco2020}, \cite{Bibinger2020}, \cite{Chong2020}, \cite{ChongDalang2020}. However, as our approach falls conveniently into the realm of functional data analysis, we might reconstruct data in several cases corresponding to well-known techniques for interpolation or smoothing.
Indeed, in practice, a typical situation is that the Hilbert space consists of real-valued functions (curves) on $\mathbb R^d$ (or some subspace thereof), but we only have access to 
discrete observations of the curves. We may have data for  $Y_{t_i}(x_j)$ at locations $x_j, j=1,\ldots, m$, or possibly some aggregation of these (or, in more generality, a finite set of linear functionals of $Y_{t_i}$). 
For example, in commodity forward markets, we have only a finite number of forward contracts traded at all times, or, like in power forward markets, we have contracts with a delivery period (see e.g. \cite{BSBK}) and hence observations of the average of $Y_{t_i}$ over intervals on $\mathbb R_+$. In other applications, like observations of temperature and wind fields in space and time, we may have accessible measurements at geographical locations where meteorological stations are situated, or, from atmospheric reanalysis where we have observations in grid cells regularly distributed in space.  From such discrete observations, one must recover the Hilbert-space elements $Y_{t_i}$. This is a fundamental issue in 
functional data analysis, and several smoothing techniques have been suggested and studied. We refer to \cite{Ramsay2005} for an extensive discussion of this. However, smoothing introduces another layer of approximation, as we do not recover $Y_{t_i}$ but some approximate version $Y^m_{t_i}$, where the superscript $m$ indicates that we have smoothed based on the $m$ available observations. The construction of a curve from discrete observations is not a unique operation as this is an inverse problem. 
In future research, it will be interesting to extend our theory to the case when (spatial) smoothing has been applied to the discrete observations.


Interestingly, when we compare our work to recent developments on high-fre\-quen\-cy estimation for volatility modulated Gaussian processes in  finite dimensions, see e.g.~\cite{Podolskij2014} for a survey,  it appears that a scaling factor is needed in the realised (co)variation so that an asymptotic theory for Volterra processes can be derived. This scaling factor is given by the
variogram of the associated so-called Gaussian core process, and depends on the corresponding kernel function. 
However, in our case, due to the semigroup property, we are in a better situation than for general Volterra equations, since we actually have (or can reconstruct) the data in order to compute the semigroup-adjusted increments. We can then develop our analysis based on extending the techniques and ideas that are used in the semimartingale case. In this way, the estimator becomes independent of further assumptions on the remaining parameters of the equation.
However, the price to pay for this universality is that the convergence speed cannot generally be determined.
The semigroup-adjustment of the increments effectively forces the estimator to converge at most at the same rate as the semigroup converges to the identity on the range of the volatility as $t$ goes to $0$. 
At first glance, it seems that the strong continuity of the semigroup suggests that we can obtain convergence just with respect to the strong topology. This would make it significantly harder to apply methods from functional data analysis, even for constant volatility processes.
Fortunately, the compactness of the operators $\sigma_tQ^{\frac 12}$ for $t\in[0,T]$ comes to the rescue and enables us to prove that convergence holds with respect to the Hilbert-Schmidt norm. In this case, we obtain reasonable convergence rates for the estimator.

\appendix

\section{Proofs of some technical results}
\label{App:proofs}

\begin{proof}[Proof of Lemma \ref{lem:HS-banachalg}]
It is well-known that $L_{\text{HS}}(U,H)$ is a separable Hilbert space (see e.g. 
\citet[Appendix A.2, p. 356]{PZ2007}). Indeed, an
ONB is $(e_i\otimes f_j)_{i,j\in\mathbb N}$
where $(e_i)_{i\in\mathbb N}$ is ONB for $U$ and $(f_j)_{j\in\mathbb N}$ for $H$. 

Notice that for any 
$x\in U$, we have for $L\in L_{\text{HS}}(U,H)$
\begin{align*}
    \Vert Lx \Vert_H^2
    =\sum_{i=1}^{\infty}\langle Lx,e_i\rangle^2_H
    =\sum_{i=1}^{\infty}\langle x,L^*e_i\rangle^2_H 
    \leq \Vert x\Vert_H^2\sum_{i=1}^{\infty}\Vert L^*e_i\Vert_H^2
    =\Vert x\Vert_U^2\Vert L^*\Vert^2_{\text{HS}},
\end{align*}
where $(e_i)_{i=1}^{\infty}$ is an ONB in $U$ and we applied the Cauchy-Schwarz inequality.
Hence, $\Vert L\Vert_{\text{op}}\leq \Vert L^*\Vert_{\text{HS}}=\Vert L\Vert_{\text{HS}}$. 
It can be seen directly from definition of the Hilbert-Schmidt norm that for $L\in L_{\text{HS}}(V,H),K\in L_{\text{HS}}(U,V)$, it holds 
$$
\Vert LK\Vert_{\text{HS}}\leq \Vert L\Vert_{\text{op}}\Vert K\Vert_{\text{HS}}\leq 
\Vert L\Vert_{\text{HS}}\Vert K\Vert_{\text{HS}},
$$
and the claimed algebraic structure of Hilbert-Schmidt operators follows. 
\end{proof}

\begin{proof}[Proof of Lemma \ref{lemma:4thmoment}]
As $Q$ is a symmetric positive definite trace class operator, there exists an ONB $(e_k)_{k=1}^{\infty}$ in $U$ being the eigenvectors of $Q$. Further, recall by Fernique's Theorem (see e.g. \cite[Thm. 3.31]{PZ2007} that all moments of $\Vert\Delta W_t\Vert_U$ exists (in fact, exponential moments are finite up to a certain degree). 

By Parseval's identity,
$$
\Vert\Delta W_t\Vert_U^{2q}=\left(\sum_{k=1}^{\infty}\langle\Delta W_t,e_k\rangle_U^2\right)^q
$$
Obviously, $\sum_{k=1}^m\langle\Delta W_t,e_k\rangle_U^2$ is increasing in $m$, and it follows from Tonelli's Theorem that
$$
\mathbb E[\Vert\Delta W_t\Vert_U^{2q}]=\lim_{m\rightarrow\infty}\mathbb E\left[\left(\sum_{k=1}^{\infty}\langle\Delta W_t,e_k\rangle_U^2\right)^q\right]
$$
We find the characteristic function of $\sum_{k=1}^m\langle\Delta W_t,e_k\rangle_U^2$: By independence of the sequence of random variables $(\langle\Delta W_t,e_k\rangle_U)_{k\in\N}$ and the fact that $\langle\Delta W_t,e_k\rangle_U\sim\mathcal N(0,\Delta\lambda_k)$, it
follows for $x\in\R$
\begin{align*}
    \mathbb E\left[e^{ix\sum_{k=1}^{\infty}\langle\Delta W_t,e_k\rangle_U^2}\right]&=\times_{k=1}^m\mathbb E\left[e^{ix\langle\Delta W_t,e_k\rangle_U^2}\right]=\times_{k=1}^m\mathbb E\left[e^{ix\Delta\lambda_k Z^2}\right],
\end{align*}
with $Z\sim\mathcal N(0,1)$. But $Z^2$ is $\chi^2(1)$-distributed, and thus,
\begin{align*}
    \mathbb E\left[e^{ix\sum_{k=1}^m\langle\Delta W_t,e_k\rangle_U^2}\right]&=\prod_{k=1}^m\left(1-2ix\Delta\lambda_k\right)^{-1/2} \\
    &=\prod_{k=1}^m\exp\left(-\frac12\ln(1-2ix\Delta\lambda_k)\right) \\
    &=\exp\left(-\frac12\sum_{k=1}^m\ln(1-2ix\Delta\lambda_k)\right).
\end{align*}
As the $q$th moment of a random variable multiplied by $i^q$ is given by the $q$th derivative of its characteristic function evaluated at zero, the first part of the Lemma follows. 

For the second part, we first find that
$$
\Phi''_m(x)=-\Delta^2\Phi(x)\left(\left(\sum_{k=1}^m\frac{\lambda_k}{1-2ix\Delta\lambda_k}\right)^2+2\sum_{k=1}^m\frac{\lambda_k^2}{(1-2ix\Delta\lambda_k)^2}\right)
$$
and therefore,
$$
\mathbb E\left[\Vert\Delta W_t\Vert_U^4\right]=(-i)^2\lim_{m\rightarrow\infty}\Phi_m''(0)=\Delta^2\left(\sum_{k=1}^{\infty}\lambda_k\right)^2+2\Delta^2\sum_{k=1}^{\infty}\lambda_k^2.
$$
The result follows. 
\end{proof}

\bibliographystyle{agsm}
\bibliography{Bib_HilbertVol}
\end{document}